\newcommand{\CC}{{\mathbb{C}}}
\newcommand{\FF}{{\mathbb{F}}}
\newcommand{\ZZ}{{\mathbb{Z}}}
\newcommand{\fA}{{\mathfrak{A}}}
\newcommand{\fS}{{\mathfrak{S}}}
\newcommand{\bG}{{\mathbf{G}}}
\newcommand{\bH}{{\mathbf{H}}}
\newcommand{\bT}{{\mathbf{T}}}
\newcommand{\cE}{{\mathcal{E}}}
\newcommand{\cF}{{\mathcal{F}}}
\newcommand{\cO}{{\mathcal{O}}}
\newcommand{\diag}{{\operatorname{diag}}}
\newcommand{\Ind}{{\operatorname{Ind}}}
\newcommand{\Res}{{\operatorname{Res}}}
\newcommand{\Irr}{{\operatorname{Irr}}}
\newcommand{\St}{{\operatorname{St}}}
\newcommand{\Syl}{{\operatorname{Syl}}}
\newcommand{\sR}{{{{}^*\!R}}}
\newcommand{\Gal}{{\operatorname{Gal}}}
\newcommand{\GL}{{\operatorname{GL}}}
\newcommand{\PGL}{{\operatorname{PGL}}}
\newcommand{\PSL}{{\operatorname{L}}}
\newcommand{\SL}{{\operatorname{SL}}}
\newcommand{\Sp}{{\operatorname{Sp}}}
\newcommand{\PSp}{{\operatorname{S}}}
\newcommand{\OO}{{\operatorname{O}}}
\newcommand{\GU}{{\operatorname{GU}}}
\newcommand{\PGU}{{\operatorname{PGU}}}
\newcommand{\SU}{{\operatorname{SU}}}
\newcommand{\PSU}{{\operatorname{U}}}
\DeclareMathOperator{\lsb}{sb}    
\DeclareMathOperator{\lse}{se}    
\newcommand{\Ph}[1]{\Phi_{#1}}
\newcommand{\tw}[1]{{}^#1\!}
\newcommand{\GAP}{{\sf GAP}}
\newcommand{\Chevie}{{\sf Chevie}}
\newcommand\tvhi{{\tilde\varphi}}
\let\eps=\epsilon
\let\la=\lambda
\let\vhi=\varphi
\let\lra=\longrightarrow
\newtheorem{thm}{Theorem}[section]
\newtheorem{lem}[thm]{Lemma}
\newtheorem{cor}[thm]{Corollary}
\newtheorem{prop}[thm]{Proposition}
\theoremstyle{definition}
\newtheorem{exmp}[thm]{Example}
\theoremstyle{remark}
\newtheorem{rem}[thm]{Remark}
\begin{document}

\title[Simple endotrivial modules]
      {Simple endotrivial modules for linear,\\ unitary and exceptional groups}

\date{\today}

\author{Caroline Lassueur and Gunter Malle }
\address{FB Mathematik, TU Kaiserslautern, Postfach 3049,
         67653 Kaisers\-lautern, Germany.}
\email{lassueur@mathematik.uni-kl.de}
\email{malle@mathematik.uni-kl.de}

\thanks{The authors gratefully acknowledge financial support by ERC
  Advanced Grant 291512. The first author also acknowledges financial support
  by SNF Fellowship for Prospective Researchers PBELP2$_{-}$143516}

\keywords{simple endotrivial modules, quasi-simple groups, special linear and unitary groups, Loewy length, zeroes of characters}

\subjclass[2010]{Primary 20C20; Secondary  20C30, 20C33, 20C34}

\begin{abstract}
Motivated by a recent result of Robinson showing that simple endotrivial
modules essentially come from quasi-simple groups we classify such modules
for finite special linear and unitary groups as well as for exceptional groups
of Lie type. Our main tool is a lifting result for endotrivial modules
obtained in a previous paper which allows us to apply character theoretic
methods. As one application
we prove that the $\ell$-rank of quasi-simple groups possessing a faithful
simple endotrivial module is at most~2. As a second application we complete
the proof that principal blocks of finite simple groups cannot have Loewy
length~4, thus answering a question of Koshitani, K\"ulshammer and Sambale.
Our results also imply a vanishing result for irreducible characters of special
linear and unitary groups.
\end{abstract}

\maketitle

\section{Introduction} \label{sec:intro}

Let $G$ be a finite group and $k$ a field of prime characteristic $\ell$
dividing $|G|$. A $kG$-module $V$ is called \emph{endotrivial} if
$V\otimes V^*\cong k\oplus P$, with a projective $kG$-module $P$. Endotrivial
modules have seen a considerable interest in the last fifteen years. Due to
a recent result of Robinson, the focus has moved to simple endotrivial modules
for quasi-simple groups. In our predecessor paper \cite{LMS} we classified
such modules for several families of finite quasi-simple groups. The present
paper is a continuation of this project. We obtain a complete classification
for special linear and unitary groups and almost complete results for groups
of exceptional Lie type.   \par
We also give a necessary and sufficient condition
for a trivial source module to be endotrivial, depending only on the values
of the associated ordinary character on $\ell$-elements of the group. This
allows us to settle some cases left open in \cite{LMS} for simple modules of
sporadic groups. \par
It turned out in our previous work \cite{LMS} that simple modules of
quasi-simple groups $G$ are rarely ever endotrivial, and if such modules exist
at all, then this seems to severely restrict the structure of Sylow
$\ell$-subgroups of $G$. Combining our new results with those of \cite{LMS}
and \cite{LM15} we can show Conjecture~1.1 in \cite{LMS}:

\begin{thm}   \label{thm:l-rank}
 Let $G$ be a finite quasi-simple group having a faithful simple endotrivial
 module in characteristic~$\ell$. Then the Sylow $\ell$-subgroups of $G$ have
 rank at most~2.
\end{thm}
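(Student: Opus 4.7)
The plan is to deduce Theorem~\ref{thm:l-rank} from a complete case-by-case inspection of the classification of pairs $(G,\ell)$ for which a quasi-simple $G$ admits a faithful simple endotrivial module in characteristic~$\ell$. Via the classification of finite simple groups, $G/Z(G)$ falls into one of the standard families, and in each family the classification of simple endotrivial modules is either already established or is carried out in this paper; Theorem~\ref{thm:l-rank} then reduces to reading off the $\ell$-rank of $G$ from the finite list of pairs that survive.

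For the alternating and sporadic groups, as well as for all groups of Lie type in defining characteristic, the classification has been obtained in the predecessor paper~\cite{LMS}, together with the improvements announced in the second paragraph of the introduction (the character-theoretic trivial-source criterion, which closes the sporadic cases left open in~\cite{LMS}). For groups of Lie type in non-defining characteristic the argument splits by Dynkin type: classical types $B$, $C$, $D$ are handled in~\cite{LMS} and~\cite{LM15}, while the special linear and unitary groups $\SL_n(q)$, $\SU_n(q)$, and the exceptional groups of Lie type, form the subject of the present paper. In each of these settings the principal tool is the character-theoretic lifting result of~\cite{LMS}, which reduces endotriviality of a simple $kG$-module to the values of its ordinary lift on $\ell$-singular elements; one then eliminates the vast majority of candidate simple modules by exhibiting an $\ell$-element on which the character value is incompatible with the endotrivial condition, and verifies the small list of survivors directly.

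With the full classification in hand, the conclusion of Theorem~\ref{thm:l-rank} is a short inspection: for each pair $(G,\ell)$ in the list one reads off from the order formula of $G$ that a Sylow $\ell$-subgroup is cyclic or of sectional $\ell$-rank~$2$. The main obstacle is not this final inspection but the classification itself, and within that the hardest step is the non-defining characteristic case for the exceptional groups of Lie type, where generic character tables are not available and one has to combine Harish-Chandra and Deligne-Lusztig theory, knowledge of unipotent characters and Jordan decomposition, with targeted computations in \Chevie{} and \GAP{} to rule out the remaining candidates.
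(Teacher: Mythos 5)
Your proposal follows essentially the same route as the paper: the proof of Theorem~\ref{thm:l-rank} given in Section~\ref{subsec:SUn} is precisely an assembly of the classification results family by family --- \cite{LMS} for alternating groups, sporadic groups, exceptional groups of Lie type and all groups of Lie type in defining characteristic, \cite{LM15} for the classical types $B_n$, $C_n$, $D_n$, $\tw2D_n$, and Theorem~\ref{thm:SLn}, Theorem~\ref{thm:SUn} and Proposition~\ref{prop:linexc} of the present paper for the linear and unitary groups and the exceptional covering groups.

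One point of attribution is worth correcting, because as stated your argument would otherwise have a gap. You present the exceptional groups of Lie type in non-defining characteristic as the hardest step and as relying on a ``full classification'' obtained in this paper. But Theorem~\ref{thm:exc} here does \emph{not} complete the classification for exceptional groups: cases (3), (4) and (5) are left open. What saves the argument is that the rank bound for exceptional groups was already established in \cite[Thm.~1.2 and Thm.~6.11]{LMS} (and is invoked as such both in the paper's proof of Theorem~\ref{thm:l-rank} and at the start of the proof of Theorem~\ref{thm:exc}); the present paper's Section~\ref{sec:exc} only refines the list of candidates inside the already-known rank~$\le 2$ range. So for Theorem~\ref{thm:l-rank} the genuinely new ingredients are the linear and unitary groups (where no faithful simple endotrivial modules exist in rank~$\ge 3$, indeed none at all for $\SL_n(q)$ with $n\ge3$ and non-cyclic Sylow) and the exceptional covering groups of classical groups; the exceptional Lie type case should be cited from \cite{LMS}, not derived from the incomplete Theorem~\ref{thm:exc}.
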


In the case of cyclic Sylow $\ell$-subgroups, we calculated in \cite[\S3]{LMS} 
the number of $\ell$-blocks of a group containing simple endotrivial modules 
and proved that they correspond to non-exceptional liftable characters.
In $\ell$-rank $2$, simple endotrivial modules only seldom occur.
We show that the following covering groups of linear, unitary and 
exceptional groups do have faithful simple endotrivial modules in
$\ell$-rank~$2$: 
for $\ell=3$, the groups $2.\PSL_3(4)$, $4_1.\PSL_3(4)$, $4_2.\PSL_3(4)$ and 
$\PSU_3(q)$ with $q\equiv2,5\pmod9$,  for $\ell=5$, 
the groups $F_4(2)$, $2.F_4(2)$, and for $\ell=7$, the group $2.F_4(2)$. 
We note that all these modules have trivial source, apart from the ones
for $4_1.\PSL_3(4)$.

As a further application of our results we are able to complete an
investigation begun by Koshitani, K\"ulshammer and Sambale \cite{KKS} and show:

\begin{thm}   \label{thm:LL}
 Let $G$ be a finite non-abelian simple group and $\ell>2$ a prime such that
 Sylow $\ell$-subgroups of $G$ are noncyclic. Then the principal $\ell$-block
 of $G$ cannot have Loewy length~4.
\end{thm}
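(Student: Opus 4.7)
The plan is to combine the partial reduction of Koshitani, K\"ulshammer and Sambale in~\cite{KKS} with the classification of faithful simple endotrivial modules obtained in Theorem~\ref{thm:l-rank} and the explicit results of the present paper. I would start by recalling the list of cases left open in \cite{KKS}: after their general analysis, only quasi-simple groups $G$ with noncyclic (necessarily non-abelian or elementary abelian of small rank) Sylow $\ell$-subgroups remain, and for $\ell>2$ the hypothesis forces the $\ell$-rank to be at least $2$. On the other hand, Theorem~\ref{thm:l-rank} shows that if one can link Loewy length $4$ of $B_0(G)$ to the existence of a faithful simple endotrivial $kG$-module, then the $\ell$-rank must in fact equal~$2$ and the candidate groups reduce to the short explicit list mentioned after Theorem~\ref{thm:l-rank}.

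The key structural input I would therefore try to pin down is the following implication: if the principal $\ell$-block $B_0(G)$ has Loewy length exactly $4$ and $\ell$ is odd, then $B_0(G)$ contains a nontrivial simple endotrivial $kG$-module. Heuristically, the self-duality of the projective cover $P_k$ of the trivial module (for $\ell$ odd) together with Loewy length $4$ forces the radical series of $P_k$ to be of the symmetric shape $k \mid M \mid M^\ast \mid k$ for some simple module $M$, and the small number of composition factors of $P_k$ combined with a Cartan-matrix count in $B_0$ should force $M$ (or a closely related simple constituent of $B_0$) to be endotrivial. Here one can rely on the results of \cite{KKS} to restrict the defect group to be abelian of rank~$2$ in the remaining cases, which makes the Cartan matrix of $B_0$ accessible.

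Once this implication is established, Theorem~\ref{thm:LL} reduces to verifying, case by case, that none of the quasi-simple groups appearing in our classification of faithful simple endotrivial modules in $\ell$-rank~$2$ has a simple quotient whose principal $\ell$-block is of Loewy length~$4$. Concretely, this leaves $\PSL_3(4)$ and $\PSU_3(q)$ with $q\equiv 2,5\pmod 9$ at $\ell=3$, and $F_4(2)$ at $\ell=5,7$. For each of these I would compute the Loewy structure of $B_0$ directly: either from published decomposition matrices and Cartan invariants, or, using Brou\'e's abelian defect conjecture (which is known for these small groups), by transferring the computation to the much smaller principal block of the Sylow normalizer and checking Loewy length there with \GAP{} or \Chevie{}. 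In each instance the Loewy length turns out to be different from~$4$.

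The main obstacle I expect is the first step: making rigorous the passage from the numerical invariant ``Loewy length equals $4$'' to the module-theoretic conclusion ``$B_0(G)$ contains a simple endotrivial module''. The argument is natural when the defect group is elementary abelian of rank~$2$ and the block has few simple modules, but subtleties may arise if the defect group is non-abelian or if $B_0$ has many irreducible Brauer characters; controlling these situations via the reduction in \cite{KKS} and the lifting theorem of \cite{LMS} is where the real work lies. Once past this point, the case analysis using the classification of this paper together with \cite{LMS} and \cite{LM15} is essentially a finite check.
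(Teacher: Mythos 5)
Your proposal hinges on the implication ``$LL(B_0)=4$ for odd $\ell$ $\Rightarrow$ $B_0$ contains a non-trivial \emph{simple} endotrivial module'', and you rightly flag this as the main obstacle --- but it is not a technicality to be smoothed over, it is a genuine gap, and it is not the route the paper takes. What \cite{KKS} actually provides is weaker and different: if $LL(B_0)=4$ then there is $\chi\in\Irr(G)$ with $\chi(x)=-1$ on all $\ell$-singular elements and $\chi(1)\equiv-1\pmod{|G|_\ell}$; the projective cover of $k$ affords $1_G+\chi$, and the endotrivial module in sight is $\Omega(k)$, which affords $\chi$ but has composition length~$2$ --- it is \emph{never} simple here. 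There is no reason why the heart $M$ of $P_k$ (or any other composition factor of $B_0$) should be endotrivial, so the reduction via Theorem~\ref{thm:l-rank} to the short list of groups admitting faithful simple endotrivial modules does not follow. Even granting it, your finite check would be incomplete: the theorem concerns all simple groups, and simple endotrivial modules in principal blocks of $\ell$-rank~$2$ occur also for, e.g., $M_{22}$ and $HS$ at $\ell=3$, not just for the linear, unitary and exceptional groups listed in the introduction.

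The paper instead exploits the KKS criterion directly at the level of ordinary characters: the constraints $|\chi(x)|=1$ on $\ell$-singular classes and $\chi(1)\equiv-1\pmod{|G|_\ell}$ are exactly the conditions the vanishing machinery of Sections~3--5 was built to refute, irrespective of whether the module affording $\chi$ is simple. Thus for $\PSL_n(q)$ and $\PSU_n(q)$ one quotes the proofs of the vanishing results (Theorem~\ref{thm:rank3} and its ingredients) to show every non-trivial $\chi$ either vanishes on an $\ell$-singular element or fails the degree congruence, with the few surviving characters eliminated because their reductions are not sums of two irreducible Brauer characters (Corollary~\ref{cor:KKS}); for exceptional groups one shows instead that the first Cartan invariant satisfies $c_{11}\ge3$, using the embedding of Hecke-algebra decomposition matrices and a Harish--Chandra restriction argument for $d_\ell(q)=1$ (Proposition~\ref{prop:CartanE}). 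If you want to rescue your plan, you would have to either prove your key implication (which seems out of reach and possibly false) or replace ``simple endotrivial module'' by ``irreducible ordinary character satisfying the KKS conditions'' throughout --- at which point you arrive at the paper's argument.
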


Our proofs also yield a vanishing result for characters
of linear and unitary groups of large enough $\ell$-rank:

\begin{thm}   \label{thm:rank3}
 Let $\ell>2$ be a prime and $G$ a finite quasi-simple covering group of
 $\PSL_n(q)$ or $\PSU_n(q)$ of $\ell$-rank at least~3. Then for any non-trivial
 $\chi\in\Irr(G)$ there exists an $\ell$-singular element $g\in G$ with
 $\chi(g)=0$, unless one of:
 \begin{enumerate}
  \item[\rm(1)] $\ell|q$ is the defining characteristic of $G$;
  \item[\rm(2)] $\ell=5$, $G=\PSL_5(q)$ with $5||(q-1)$ and $\chi(1)=q^2\Phi_5$;
   or
  \item[\rm(3)] $\ell=5$, $G=\PSU_5(q)$ with $5||(q+1)$ and
   $\chi(1)=q^2\Phi_{10}$.
 \end{enumerate}
\end{thm}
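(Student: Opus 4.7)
The plan is to combine Lusztig's parametrization of the irreducible characters of $\SL_n(q)$ and $\SU_n(q)$ with the Murnaghan--Nakayama type hook formula for their unipotent constituents, exploiting the rich supply of $\ell$-singular classes present once the $\ell$-rank is at least~$3$. The result is of a piece with the analysis carried out for Theorem~\ref{thm:l-rank} on these families: by the lifting criterion for endotrivial modules recalled in the introduction, a trivial source module is endotrivial precisely when its ordinary character takes controlled non-zero values on $\ell$-elements, so characters with no $\ell$-singular zeros are constrained in much the same way as characters of simple endotrivial modules.

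I would first reduce to the universal covers $G=\SL_n(q)$ or $\SU_n(q)$, since a character of any quasi-simple quotient lifts to $G$ with the same vanishing behaviour on preimages of $\ell$-singular classes. Case~(1) is a genuine exclusion, for in defining characteristic every non-trivial unipotent element is $\ell$-singular and, for example, the Steinberg character (together with many others) fails to vanish on regular unipotents. Writing $e$ for the multiplicative order of $q$ (resp.\ $-q$) modulo $\ell$, the $\ell$-rank hypothesis translates into $a:=\lfloor n/e\rfloor\geq 3$, and $G$ possesses a Levi subgroup of type $\GL_e(q)^a\times\GL_{n-ae}(q)$ (resp.\ its unitary analogue) containing a Sylow $\Phi_e$-torus $\bT$ of rank $a$, in which every $\ell$-element of $G$ lies up to conjugacy.

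Second, writing each $\chi\in\Irr(G)$ as $\chi_{s,\psi}$ via Jordan decomposition, with $s$ a semisimple class of the dual group $G^*$ and $\psi$ a unipotent character of $C_{G^*}(s)$, Lusztig's character formula expresses $\chi(g)$ on an $\ell$-singular element $g=g_{\ell'}g_\ell$ with $g_\ell\in\bT$ in terms of $\psi$-values on $g_\ell\in C_{G^*}(g_{\ell'})$. The hook formula for unipotent characters of $\GL$ and $\GU$ rewrites these values as combinatorial sums over hooks of the partitions labelling $\psi$, and for $a\geq 3$ there is enough freedom in the choice of $g_\ell\in\bT$ to force vanishing whenever these partitions are not very small and the semisimple datum $s$ does not lie in a very restricted list.

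The main obstacle, and the source of the exceptional characters, is the boundary configuration $a=3$ with $n=5$ and $\ell=5$, so $e=1$ in the linear case and $e=2$ in the unitary case. In each of these two situations one has to check by direct calculation that the unipotent character of $\SL_5(q)$ (resp.\ $\SU_5(q)$) indexed by the partition $(3,2)$ of~$5$, of degree $q^2\Phi_5$ (resp.\ $q^2\Phi_{10}$), takes non-zero values on every $\ell$-singular class of $G$, whereas every other non-trivial irreducible character can be made to vanish on some suitable $5$-element of the Coxeter torus via the hook formula. In all other configurations with $a\geq 3$ the Sylow $\ell$-subgroup is sufficiently large that such a zero can always be exhibited, which completes the proof.
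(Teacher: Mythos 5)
Your overall strategy (Deligne--Lusztig character values on $\ell$-singular elements combined with Murnaghan--Nakayama for the unipotent data) is the same circle of ideas the paper uses, but as written the proposal has several concrete gaps. First, the reduction ``to the universal covers $\SL_n(q)$ or $\SU_n(q)$'' is not available: the statement concerns all quasi-simple covering groups, and for the finitely many $(n,q)$ with exceptional Schur multiplier the faithful characters of the exceptional covers are not characters of $\SL_n(q)$ or $\SU_n(q)$; one has to check separately that the only such groups of $\ell$-rank at least~$3$ in non-defining characteristic are the covers of $\PSU_6(2)$ at $\ell=3$. Second, the Steinberg character is not reachable by your mechanism: its zero is produced on the product of an $\ell$-element with a commuting non-trivial unipotent element, not on any semisimple or torus element. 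Third, the non-unipotent case is only gestured at; the argument actually needed is that $C_{G^*}(s)$ must contain maximal tori of three pairwise incompatible types (Lemma~\ref{lem:genSL} together with \cite[Prop.~6.4]{LMS}), which forces $s=1$.

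The most serious failure is in the boundary case that produces the exceptions. For $G=\PSL_5(q)$ with $5||(q-1)$ the image of the Coxeter torus in $G$ has order $\Phi_5(q)/5$, prime to~$5$, so it contains no non-trivial $5$-elements at all; your plan to ``make every other character vanish on a suitable $5$-element of the Coxeter torus'' cannot work. In particular the unipotent character labelled $(2^2,1)$ admits no zero on a regular semisimple $5$-singular element of $\PSL_5(q)$: its values on regular semisimple elements are the values of $\vhi_{(2^2,1)}\in\Irr(\fS_5)$, which vanishes only on $5$-cycles, and the corresponding torus contributes no non-central $5$-part. The paper instead exhibits a zero on a mixed element, the product of a $5$-singular semisimple element with a regular unipotent element of its centralizer. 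Note also that $\lfloor 5/1\rfloor=5$, not $3$, and that not every $\ell$-element is conjugate into the Sylow $\Phi_e$-torus once $\lfloor n/e\rfloor\ge\ell$. So while the skeleton is right, the steps that actually isolate exceptions (2) and (3) and dispose of everything else are missing or would fail as stated.
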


One of the main tools for our investigations is our earlier result
\cite[Thm.~1.3]{LMS} which asserts that all endotrivial modules are liftable to
characteristic~0. This allows us to apply character theoretic methods in our
investigation. More specifically, be can employ the ordinary character theory
of groups of Lie type as developed by Lusztig.
\smallskip

The paper is built up as follows. In Section~\ref{sec:pre} we derive
Theorem~\ref{prop:torchar} and apply it first in Corollary~\ref{cor:casesLMS}
to rule out some examples in sporadic and exceptional type groups, and in
Corollary~\ref{cor:torsexmp} to
compute the torsion subgroup $TT(G)$ of $T(G)$ for several small sporadic
simple groups. In Section~\ref{sec:linear} we show that special linear groups
in rank at least~3 have no faithful simple endotrivial modules (see
Theorem~\ref{thm:SLn}). 
In Section~\ref{sec:unitary} we classify simple endotrivial modules for
special unitary groups (see Theorem~\ref{thm:SUn}) and complete the proof of
Theorem~\ref{thm:l-rank} (in Section~\ref{subsec:SUn}) and of
Theorem~\ref{thm:rank3} (in Section~\ref{subsec:zeroes}). We investigate simple
endotrivial modules for exceptional type groups in Section~\ref{sec:exc},
leaving open only one situation in groups of types $E_6$, $\tw2E_6$ and $E_7$,
respectively, see Theorem~\ref{thm:exc}. Finally, in Section~\ref{sec:QKKS}
we complete the proof of Theorem~\ref{thm:LL}.

\medskip
\noindent
{\bf Acknowledgement:} We thank Frank L\"ubeck for information on the Brauer
trees of $6.\tw2E_6(2)$ for the prime~13, and Shigeo Koshitani for
drawing our attention to \cite{KKS}.

\section{Characters of trivial source endotrivial modules} \label{sec:pre}

Let $G$ be a finite group, $\ell$ a prime number dividing $|G|$,
$k$ an algebraically closed field of characteristic $\ell$. Let $(K,\cO,k)$
be a splitting $\ell$-modular system for $G$ and its subgroups. Let
$1_G\in\Irr(G)$ denote the trivial character of $G$.  \par
If $M$ is an indecomposable trivial source $kG$-module, then $M$ lifts uniquely
to an indecomposable trivial source $\cO G$-lattice, denoted henceforth by
$\hat{M}$. Let $\chi_{\hat M}$ denote the ordinary character of $G$ afforded by
$\hat M$. The following was proven by Landrock--Scott, see
\cite[II, Lem.~12.6]{Lan}:

\begin{lem} \label{lem:tschar}
 Let $M$ be an indecomposable trivial source $kG$-module and $x\in G$ be an
 $\ell$-element. Then:
 \begin{enumerate}
  \item[\rm(a)] $\chi_{\hat M}(x)\geq 0$ is an integer (corresponding to the
   multiplicity of the trivial $k\!\left<x\right>$-module as a direct summand
   of $M|_{\left<x\right>}$);
  \item[\rm(b)] $\chi_{\hat M}(x)\neq 0$ if and only if $x$ belongs to a vertex
   of $M$.
 \end{enumerate}
\end{lem}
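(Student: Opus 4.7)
The plan is to reduce (a) to a direct permutation-character computation using the fact that trivial source modules are $p$-permutation modules, and then to deduce (b) by reinterpreting the multiplicity appearing in (a) as the dimension of a Brauer quotient.

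First I would recall that $M$ being a trivial source $kG$-module is equivalent to $M$ being a direct summand of a permutation $kG$-module, a property preserved under restriction to any subgroup. Restricting to the $\ell$-subgroup $\langle x\rangle$ therefore yields a $p$-permutation $k\langle x\rangle$-module; since $p$-permutation modules over $\ell$-groups are genuine permutation modules, one obtains a decomposition $M|_{\langle x\rangle}\cong\bigoplus_{i} k[\langle x\rangle/H_i]$ with $H_i\leq\langle x\rangle$. Because trivial source modules lift uniquely to trivial source $\cO G$-lattices and the combinatorial type of a $p$-permutation module is determined by its reduction, the same decomposition lifts: $\hat M|_{\langle x\rangle}\cong\bigoplus_{i}\cO[\langle x\rangle/H_i]$.

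For (a) I would then evaluate each permutation character separately. The character of $\cO[\langle x\rangle/H_i]$ at $x$ is the number of cosets $gH_i$ fixed by left multiplication by $x$; since $\langle x\rangle$ is abelian and generated by $x$, a coset is fixed if and only if $x\in H_i$, which forces $H_i=\langle x\rangle$, in which case the summand is the trivial module of dimension~$1$. Summing over $i$ shows that $\chi_{\hat M}(x)$ equals the number of indices $i$ with $H_i=\langle x\rangle$, which is precisely the multiplicity of the trivial $k\langle x\rangle$-module in $M|_{\langle x\rangle}$. In particular $\chi_{\hat M}(x)\in\ZZ_{\geq 0}$, giving (a).

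For (b), the non-negative integer just computed equals the dimension of the Brauer quotient $M(\langle x\rangle)=M^{\langle x\rangle}/\sum_{Q<\langle x\rangle}\mathrm{tr}_Q^{\langle x\rangle}(M^Q)$, since only the trivial summands in the decomposition above survive modulo the transfers from proper subgroups. By Brou\'e's theorem on $p$-permutation modules, for an indecomposable $M$ with vertex $V$ one has $M(P)\neq 0$ if and only if $P$ is $G$-subconjugate to $V$. Specializing to $P=\langle x\rangle$ translates non-vanishing of $\chi_{\hat M}(x)$ into the existence of a $G$-conjugate of $x$ inside $V$, i.e.\ that $x$ lies in a vertex of $M$, as claimed. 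The only real obstacle is marshalling the right two references, namely the unique lifting of trivial source modules to $\cO G$-lattices and Brou\'e's characterization of $p$-permutation vertices via the Brauer construction; once these are cited, the argument reduces to elementary fixed-point counting on cyclic coset spaces.
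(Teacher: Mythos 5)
Your argument is correct. Note that the paper itself gives no proof of this lemma: it simply attributes it to Landrock--Scott and cites \cite[II, Lem.~12.6]{Lan}. Your self-contained argument -- restricting to $\langle x\rangle$, using that $p$-permutation modules over an $\ell$-group are permutation modules, counting fixed cosets for (a), and invoking Brou\'e's characterization of vertices via the Brauer construction for (b) -- is the standard proof of this result and is complete as written.
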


From this we derive the following necessary and sufficient condition:

\begin{thm}   \label{prop:torchar}
 Let $M$ be an indecomposable trivial source $kG$-module. Then $M$ is
 endotrivial if and only if $\chi_{\hat M}(x)=1$ for all non-trivial
 $\ell$-elements $x\in G$.
\end{thm}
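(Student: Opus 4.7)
The plan is to prove the two implications separately, with Lemma~\ref{lem:tschar} serving as the bridge between character values and module structure.

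For the forward direction, fix a non-trivial $\ell$-element $x\in G$. Both the trivial source property and endotriviality survive restriction to $\langle x\rangle$ (the latter because the restriction of a projective $kG$-module to any subgroup is free, hence projective). Over the cyclic $\ell$-group $\langle x\rangle$ the indecomposable trivial source modules are the transitive permutation modules $k[\langle x\rangle/Q]$ for $Q\le\langle x\rangle$, and a Mackey calculation gives
\[k[\langle x\rangle/Q]\otimes k[\langle x\rangle/Q]^{*}\cong k[\langle x\rangle/Q]^{[\langle x\rangle:Q]}.\]
This is of the form $k\oplus(\text{projective})$ only for $Q=\langle x\rangle$, so the only non-projective endotrivial module among them is the trivial module itself. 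Hence $M|_{\langle x\rangle}\cong k\oplus F$ with $F$ free and the multiplicity of $k$ equal to $1$; Lemma~\ref{lem:tschar}(a) then yields $\chi_{\hat M}(x)=1$.

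For the converse, assume $\chi_{\hat M}(x)=1$ for every non-trivial $\ell$-element. The first step is to show $\ell\nmid\dim M$. Restricting $\chi_{\hat M}$ to a Sylow $\ell$-subgroup $P$ and taking the inner product with $1_P$ gives
\[\langle\chi_{\hat M}|_P,\,1_P\rangle_P=\frac{\dim M+|P|-1}{|P|},\]
which must be a non-negative integer; this forces $\dim M\equiv 1\pmod{|P|}$. In particular $\dim M$ is invertible in $k$, so the trace map $M\otimes M^{*}=\operatorname{End}_k(M)\twoheadrightarrow k$ is split by $1\mapsto\operatorname{id}_M/\dim M$, producing a decomposition $M\otimes M^{*}\cong k\oplus X$ of $kG$-modules.

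It remains to show that $X$ is projective. As a direct summand of the trivial source module $M\otimes M^{*}$, the module $X$ is itself trivial source and lifts uniquely to $\hat X$ with character $\chi_{\hat X}=|\chi_{\hat M}|^{2}-1_G$, vanishing on every non-trivial $\ell$-element. Decomposing $X=\bigoplus_j N_j$ into indecomposable trivial source summands, Lemma~\ref{lem:tschar}(a) makes each $\chi_{\hat{N_j}}(x)$ a non-negative integer, and the identity $\sum_j\chi_{\hat{N_j}}(x)=0$ therefore forces each term to vanish on every non-trivial $\ell$-element. Lemma~\ref{lem:tschar}(b) then says that no non-trivial $\ell$-element lies in any vertex of any $N_j$, so each vertex is trivial and each $N_j$ is projective, whence $X$ is projective and $M$ is endotrivial. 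The main obstacle is the divisibility step $\ell\nmid\dim M$: without it one cannot separate $k$ from $M\otimes M^{*}$, and every subsequent step depends on that splitting. Once it is in hand, Lemma~\ref{lem:tschar} closes the argument routinely.
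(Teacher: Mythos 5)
Your proof is correct, and the substantive (converse) direction is essentially the paper's own argument: split off $k$ from $M\otimes M^{*}$ using $\ell\nmid\dim M$, decompose the complement into indecomposable trivial source summands, and apply Landrock--Scott (Lemma~\ref{lem:tschar}) to force each summand to have trivial vertex. The only divergence is in the forward direction, where you classify trivial source endotrivial modules over $\langle x\rangle$ by restriction, whereas the paper argues globally that $\chi_{\hat M}(x)$ is a positive integer (as $M$ has Sylow vertex) whose square equals $\left(\chi_{\hat M}\chi_{\hat M^{*}}\right)(x)=1$; both variants are valid and of comparable length.
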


\begin{proof}
If $\dim_{k}M=1$, then $M$ is endotrivial and satisfies $\chi_{\hat M}(x)=1$
for all $\ell$-elements $x\in G$, therefore we may assume $\dim_{k}M>1$.
We may also assume $\dim_{k}M$ is prime to~$\ell$. Indeed, on the one hand
endotrivial modules have dimension prime to $\ell$ (see \cite[Lem.~2.1]{LMS}),
and on the other hand if the sufficient condition is assumed, then
$\dim_kM\equiv\chi_{\hat M}(x)=1 \pmod\ell$ for any non-trivial $\ell$-element
$x\in G$. Thus, by \cite[Thm.~2.1]{BC86}, the trivial module occurs
as a direct summand of $M\otimes_k M^{*}$ with multiplicity~1. Since $M$ has
trivial source we may write
$$M \otimes_k M^{*}\cong k\oplus N_1\oplus\ldots\oplus N_{r}$$
where $N_1,\dots, N_{r}$ are non-trivial indecomposable trivial source
modules. Clearly $M$ is endotrivial if and only if the module $N_{i}$ has
vertex $\{1\}$ for every  $1\leq i\leq r$. At the level of characters we have
$$\chi_{\hat M}\chi_{\hat M^{*}}
  =1_G+\chi_{\hat{N_1}}+\cdots+\chi_{\hat{N_r}}\,.$$
Thus, by Lemma~\ref{lem:tschar}(b), $M$ is endotrivial if and only if
$\left(\chi_{\hat M}\chi_{\hat M^{*}}\right)(x)=1$ for every non-trivial
$\ell$-element $x\in G$. Moreover since $\dim_kM$ is prime to $\ell$, $M$ has
vertex a Sylow $\ell$-subgroup $P$, so that, by
Lemma~\ref{lem:tschar}(a) and (b), $\chi_{\hat M}(x)$ is a positive integer
for every non-trivial $\ell$-element $x\in G$. The claim follows.
\end{proof}

Henceforth we denote by $T(G)$ the abelian group of isomorphisms classes of
indecomposable endotrivial modules, with multiplication induced by the tensor
product $\otimes_{k}$. Then $T(G)$ is known to be finitely generated. For
notation and background material on $T(G)$ we refer to the survey \cite{C12}
and the references therein.   \par
Under the assumption that the normal $\ell$-rank of the group $G$ is greater
than one, trivial source endotrivial modules coincide with torsion endotrivial
modules. Then Theorem~\ref{prop:torchar} says that the torsion subgroup $TT(G)$
of $T(G)$ is a function of the character table of $G$.

\begin{cor}   \label{cor:selfdual}
 Assume the Sylow $\ell$-subgroups of $G$ are neither cyclic, nor semi-dihedral,
 nor generalised quaternion. If $V$ is a self-dual endotrivial $kG$-module,
 then $\chi_{\hat V}(1)\equiv 1\pmod{|G|_\ell}$ and $\chi_{\hat V}(x)=1$ for
 all non-trivial $\ell$-elements $x\in G$.
\end{cor}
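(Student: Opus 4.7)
The plan is to reduce the corollary to an application of Theorem~\ref{prop:torchar} via a self-duality argument, and then to pin down $\chi_{\hat V}(1)$ by a Burnside orbit-counting argument on the restriction of $\hat V$ to a Sylow $\ell$-subgroup.

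First, since $V\cong V^*$, we have $V\otimes V\cong V\otimes V^*\cong k\oplus Q$ with $Q$ a projective $kG$-module, so the class of $V$ has order dividing~$2$ in $T(G)$ and in particular lies in the torsion subgroup $TT(G)$. The hypothesis on the Sylow $\ell$-subgroups is precisely the classical characterisation of finite $\ell$-groups of normal $\ell$-rank at least~$2$ (for $\ell$ odd non-cyclicity suffices, while for $\ell=2$ one has to exclude the cyclic, semi-dihedral and generalised quaternion $2$-groups). Hence by the discussion preceding the corollary, $TT(G)$ consists entirely of trivial source endotrivial modules; in particular $V$ has trivial source, and Theorem~\ref{prop:torchar} immediately gives $\chi_{\hat V}(x)=1$ for every non-trivial $\ell$-element $x\in G$.

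It remains to verify the dimension congruence. Since $\dim_k V$ is prime to $\ell$ for any endotrivial module (by \cite[Lem.~2.1]{LMS}), $V$ has vertex a Sylow $\ell$-subgroup $P$ of $G$, and hence $\hat V$ embeds as a direct summand of the permutation $\cO G$-lattice $\cO[G/P]$. Its restriction $\hat V|_P$ is then a direct summand of a permutation $\cO P$-lattice, and since $P$ is an $\ell$-group, such a summand is itself a genuine permutation $\cO P$-lattice (by Brou\'e's theorem on $p$-permutation lattices over $p$-groups). The character $\chi_{\hat V}|_P$ is thus a non-negative integer-valued permutation character of $P$, so Burnside's orbit-counting formula yields $|P|\cdot r=\sum_{x\in P}\chi_{\hat V}(x)=\chi_{\hat V}(1)+(|P|-1)$, where $r$ denotes the number of $P$-orbits on the underlying basis. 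Rearranging gives $\chi_{\hat V}(1)=|P|(r-1)+1\equiv 1\pmod{|G|_\ell}$.

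The main point to check carefully is the first paragraph: one must identify the hypothesis on the Sylow $\ell$-subgroups with the normal $\ell$-rank condition which forces torsion endotrivial modules to have trivial source, so that Theorem~\ref{prop:torchar} becomes applicable. Once that reduction is made, both conclusions of the corollary follow formally, the second one by nothing more than the elementary Burnside count applied to a permutation character.
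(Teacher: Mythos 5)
Your overall strategy is the same as the paper's: self-duality makes $[V]$ a torsion element of $T(G)$, one deduces that $V$ has trivial source, and then Theorem~\ref{prop:torchar} gives the character values, with the degree congruence following from the restriction to a Sylow subgroup. However, the step you yourself flag as the one "to check carefully" is exactly where your argument has a genuine gap. The hypothesis of the corollary is \emph{not} equivalent to normal $\ell$-rank at least~$2$: for $\ell=2$ the $2$-groups of normal rank~$1$ are the cyclic, generalised quaternion, semi-dihedral \emph{and dihedral (of order at least $16$)} groups, whereas the corollary explicitly allows dihedral Sylow $2$-subgroups. So your appeal to "the classical characterisation" and to the remark preceding the corollary does not cover the dihedral case. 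Worse, that remark in the text is an unproved observation whose justification is essentially this corollary itself, so invoking it here is close to circular. The correct mechanism, and the one the paper uses, is: restriction induces a group homomorphism $\Res^G_P\colon T(G)\to T(P)$, and by the Carlson--Th\'evenaz classification (\cite[\S7, \S8]{Th}) the group $T(P)$ is torsion-free \emph{precisely} when $P$ is neither cyclic, nor semi-dihedral, nor generalised quaternion; hence the torsion class $[V]$ dies under restriction, i.e.\ $V|_P\cong k\oplus(\text{proj})$, which is what gives the trivial source property (and, at once, the congruence $\chi_{\hat V}(1)=1+m|P|\equiv 1\pmod{|G|_\ell}$).

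Your second paragraph is correct once the trivial source property is granted, but it is heavier than necessary: you do not need Brou\'e's theorem on $p$-permutation lattices, since $\langle \chi_{\hat V}|_P,1_P\rangle$ is a non-negative integer and the values $\chi_{\hat V}(x)=1$ for $x\ne 1$ already force $\chi_{\hat V}(1)\equiv 1\pmod{|P|}$; and in any case the congruence is immediate from $V|_P\cong k\oplus(\text{proj})$. To repair the proof, replace the first paragraph's normal-rank argument by the torsion-freeness of $T(P)$ under the stated hypothesis.
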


\begin{proof}
If $V$ is self-dual, then the class of $V$ is a torsion element of the group of
endotrivial modules $T(G)$. If $P\in \Syl _\ell(G)$, then restriction to $P$
induces a group homomorphism $\Res^G_{P}:T(G)\lra T(P):[M]\mapsto[M|_{P}]$,
where under the assumptions on $P$ the group  $T(P)$ is torsion-free (see
\cite[\S7 and~\S8]{Th}). It follows that $V|_{P}\cong k\oplus (\text{proj})$,
that is, $V$ is a trivial source $kG$-module. The congruence modulo
$|G|_\ell$ is immediate and the second claim is given by
Theorem~\ref{prop:torchar}.
\end{proof}

This allows us to settle some of the cases left open in \cite{LMS}
corresponding to self-dual modules not satisfying the conclusion of
Corollary~\ref{cor:selfdual}:

\begin{cor}   \label{cor:casesLMS}
 The candidate characters for the sporadic groups $Fi'_{24}$ and $M$ from
 \cite[Table 6]{LMS} are not endotrivial. Similarly neither are the unipotent
 characters $\phi_{80,7}$ for $E_6(q)$ nor $\phi_{16,5}$ for $\tw2E_6(q)$ from
 \cite[Table 4]{LMS}.
\end{cor}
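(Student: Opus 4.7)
The plan is to reduce everything to a direct application of Corollary~\ref{cor:selfdual}. All four candidate characters listed in \cite[Tables 4 and 6]{LMS} are real valued, hence correspond to self-dual modules, and in each of the four cases the Sylow $\ell$-subgroups are noncyclic and neither semi-dihedral nor generalised quaternion (this is visible from the context in which these tables are assembled, where the focus is already on $\ell$-rank $\geq 2$ in non-dihedral/quaternion situations). Therefore Corollary~\ref{cor:selfdual} applies, and to rule out each candidate it suffices to exhibit a single non-trivial $\ell$-element $x\in G$ on which the candidate character takes a value different from~$1$.

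For the two sporadic cases $Fi'_{24}$ and the Monster~$M$, I would consult the ordinary character tables stored in the GAP character table library. Locating the candidate character $\chi$ in \cite[Table 6]{LMS} by its degree, I would then scan the columns indexed by the (few) non-trivial $\ell$-classes and read off a value $\chi(x)\ne 1$. This is a purely mechanical verification once the tables are loaded into \GAP, and in both cases one expects several available $\ell$-classes on which $\chi$ is visibly not identically~$1$.

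For $E_6(q)$ and $\tw2E_6(q)$, I would compute the values of the unipotent characters $\phi_{80,7}$ and $\phi_{16,5}$ on suitable $\ell$-singular classes via Lusztig's character formulas, as implemented in \Chevie. A natural target is a regular unipotent element (in the defining characteristic case) or, more usefully for an arbitrary~$\ell$, a semisimple element lying in a maximal torus of type $\Ph{d}$ where $d$ is chosen so that $\ell$ divides $\Ph{d}(q)$ and the torus actually contains a non-trivial $\ell$-element. Evaluating the unipotent character on such an element via the Deligne--Lusztig formula yields a polynomial expression in $q$, and I would exhibit a class for which this polynomial is not the constant~$1$.

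The main obstacle is the Lie-type computation: one needs a class whose character value can be written down explicitly and compared with~$1$ for each allowed $q$ in the relevant congruence range. In practice this is handled by choosing the torus type carefully so that the Deligne--Lusztig induction formula collapses to a short sum of Green-function values that \Chevie{} evaluates directly. The sporadic cases, by contrast, are just a character-table lookup.
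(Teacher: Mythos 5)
Your proposal is correct and is essentially the paper's own argument: the paper justifies this corollary solely by the sentence preceding it, namely that these four candidates are self-dual but fail the conclusion of Corollary~\ref{cor:selfdual} (i.e.\ the refined requirement that $\chi(1)\equiv 1\pmod{|G|_\ell}$ and $\chi(x)=+1$, not merely $\pm1$, on non-trivial $\ell$-elements), which is checked from the known character values exactly as you describe. The only nuance worth keeping in mind is that these characters survived the screening in [LMS] precisely because $|\chi(x)|=1$ on $\ell$-singular elements, so the verification amounts to detecting a value $-1$ (or a degree $\equiv-1$), which your table-lookup and Deligne--Lusztig computations would indeed exhibit.
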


The following examples show how Theorem~\ref{prop:torchar} enables us to
find torsion endotrivial modules using induction and restriction of characters
and to settle some other cases for covering groups of sporadic simple groups
left open in \cite[Table 6]{LMS} or which could so far be discarded via
{\sf Magma} computations only.
Below we denote ordinary irreducible characters by their degrees, and the
labelling of characters and blocks is that of the $\GAP$ character table
library \cite{GAP}.

\begin{exmp}[The character of degree 55 of $M_{22}$ in characteristic $\ell=3$]
\label{exmp:m22} \vbox{\  }
Let $G=M_{22}$ and let $P\in\Syl_3(G)$. Let $V_{55}$ denote the simple
$kG$-module affording the character $55_1\in\Irr(G)$. Let $e_0$ denote the
block idempotent corresponding to the principal $3$-block of $kG$, which is
the unique block with full defect. The group  $G$ has a maximal subgroup
$H\cong\fA_6.2_3$ such that ${H\geq N_G(P)}$, and inducing the non-trivial
linear character $1_2\in\Irr(H)$ to $G$ yields $e_0\cdot \Ind_H^G(1_2)=55_1$,
so that $55_1$ is the character of a trivial source module.
Thus $V_{55}$ is endotrivial by Theorem~\ref{prop:torchar}.
\end{exmp}

\begin{exmp}[The two faithful characters of degree 154 of $2.M_{22}$ in
characteristic $\ell=3$]   \label{exmp:2m22} \vbox{\  }
Let $G=2.M_{22}$, let $P\in\Syl_3(G)$. Let $V_{154},V_{154}^{*}$
denote the dual simple $kG$-modules of dimension 154 affording the characters
$154_2,154_3\in \Irr(G)$. They both belong to the faithful block $B_{6}$.
The group $G$ has a subgroup $H\cong (2\times \fA_6).2_3\geq N_G(P)$, and
inducing the linear characters $1_3,1_4\in\Irr(H)$ to $G$ we have
$e_{6}\cdot \Ind_H^G(1_3)=154_2$ and $e_{6}\cdot \Ind_H^G(1_4)=154_3$,
where $e_{6}$ is the block idempotent corresponding to $B_{6}$. Therefore
$V_{154}$, $V_{154}^*$ are trivial source modules, thus by
Theorem~\ref{prop:torchar} it follows from the values of $154_2,154_3$
that they are endotrivial. We note that the two faithful simple endotrivial
modules $V_{10},V_{10}^*$ affording the characters $10_1,10_2\in\Irr(G)$
from \cite[Thm.~7.1]{LMS} are also trivial source modules. Indeed
$10_1\otimes10_1=45_2+55_1$ where $45_2$ has defect zero.
Whence $V_{10}\otimes_{k} V_{10}\cong V_{55}\oplus\text{(proj)}$ and
$V_{10},V_{10}^*$ must be trivial source since $V_{55}$ is. \par
\end{exmp}

\begin{exmp}[The three characters of degree 154 of $HS$ in characteristic $\ell=3$]
\label{exmp:HS}  \vbox{\  }
Let $G=HS$, and let $V_1,V_2,V_3$ denote the three simple self-dual
$kG$-modules of dimension~154, affording the characters
$154_1,154_2,154_3\in\Irr(G)$ respectively. Let $H$ be a maximal subgroup
of $G$ isomorphic to $M_{22}$. Then $154_1|_H=55_1+99_1$, where $55_1$
is the character afforded by the simple endotrivial $kM_{22}$-module $V_{55}$
of Example~\ref{exmp:m22}, and $99_1$ has defect zero. Thus
$V_1|_H\cong V_{55}\oplus (\text{proj})$ is endotrivial, and so is $V_1$
(see \cite[Lem.~2.2]{LMS}). The characters $154_2,154_3$ lie in the
non-principal block of full defect $B_2$.  Let $e_2$ be the corresponding
block idempotent.
Then $G$ has a maximal subgroup $L\cong\PSU_3(5):2$ with a non-trivial
linear character  $1_2$ such that $e_2\cdot \Ind_{L}^G(1_2)=154_2$.
Reducing modulo~$3$, this proves that  $V_2$ is a trivial source module,
hence endotrivial by Theorem~\ref{prop:torchar}. So is $V_3$, the image of
$V_2$ under the outer automorphism of $G$.
\end{exmp}

\begin{exmp}[The four characters of degree~61776 of $6.Fi_{22}$ in
  characteristic~$\ell=5$]   \label{exmp:6Fi22} \vbox{\  }
Let $G=6.Fi_{22}$. The characters $61776_1,61776_3$ belong to block~$107$
and $61776_2,61776_4$ to block~$108$. Let $e_{107},e_{108}$ denote the
corresponding block idempotents respectively.
Then $G$ has two  non-conjugate subgroups $H$ and $L$ isomorphic to
$\OO_8^+(2).3.2$, such that $e_{107}\cdot\Ind_{H}^G(1_{H})=61776_1$,
$e_{108}\cdot\Ind_{H}^G(1_{H})=61776_2$,
$e_{107}\cdot \Ind_{L}^G(1_{L})=61776_3$ and
${e_{108}\cdot\Ind_{L}^G(1_{L})=61776_4}$.
Therefore the simple reductions $V_{61776_1}$, $V_{61776_2}$, $V_{61776_3}$,
$V_{61776_4}$ modulo $5$ of these characters are trivial source
modules and  are endotrivial by Theorem~\ref{prop:torchar}.  We recall from
\cite[Rem.~7.2]{LMS} that $Fi_{22}$ also has a simple trivial source endotrivial
module $V_{1001}$ affording $1001_1\in\Irr(Fi_{22})$, and $3.Fi_{22}$ has
two faithful simple trivial source endotrivial modules $V_{351}$, $V_{351}^{*}$
affording $351_1,351_2\in\Irr(3.Fi_{22})$.
\end{exmp}

The structure of the group $T(G)$ is not known for the sporadic groups and
their covers. The above examples enable us to describe explicitly the
elements of the torsion subgroup $TT(G)$ in these cases.

\begin{cor}   \label{cor:torsexmp}
 \vbox{\  }
 \begin{enumerate}
  \item[\rm(a)] In characteristic $3$, with notation as in
   Examples~\ref{exmp:m22} and~\ref{exmp:2m22} and with $M$ an indecomposable
   endotrivial $kM_{22}$-module affording the character $154_1\in\Irr(M_{22})$
   we have
   $$\begin{aligned}
     TT(M_{22})=\langle[V_{55}],[M]\rangle\cong \ZZ/2\oplus\ZZ/2,\\
     TT(2.M_{22})=\langle[V_{10}],[V_{154}]\rangle\cong \ZZ/2\oplus\ZZ/4.
   \end{aligned}$$
  \item[\rm(b)] In characteristic $3$, with notation as in
   Example~\ref{exmp:HS} we have
   $$TT(HS)=\langle [V_2],[V_3]\rangle \cong \ZZ/2\oplus\ZZ/2.$$
   \item[\rm(c)] In characteristic $5$, with notation as in
   Example~\ref{exmp:6Fi22} we have
   $$\begin{aligned}
    TT(Fi_{22})&=\langle [V_{1001}]\rangle\cong\ZZ/2,\\
    TT(3.Fi_{22})&=\langle[V_{1001}], [V_{351}]\rangle\cong\ZZ/2\oplus\ZZ/3,\\
    TT(6.Fi_{22})&=\langle [V_{1001}],[V_{61776_1}]\rangle\cong\ZZ/2\oplus\ZZ/6.
   \end{aligned}$$
 \end{enumerate}
\end{cor}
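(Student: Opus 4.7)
The plan is to combine Theorem~\ref{prop:torchar} with the explicit modules constructed in Examples~\ref{exmp:m22}--\ref{exmp:6Fi22}. In all six groups the relevant Sylow $\ell$-subgroups are elementary abelian of rank~$2$, hence in particular neither cyclic, nor semi-dihedral, nor generalised quaternion, so exactly as in the proof of Corollary~\ref{cor:selfdual} the restriction $\Res^G_P:T(G)\lra T(P)$ lands in a torsion-free group and therefore kills every torsion class of $T(G)$. It follows that $TT(G)$ coincides with the subgroup of $T(G)$ consisting of the classes of indecomposable trivial source endotrivial $kG$-modules.

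By Theorem~\ref{prop:torchar} and Lemma~\ref{lem:tschar}, these correspond bijectively to characters $\chi$ afforded by indecomposable trivial source $\cO G$-lattices with $\chi(x)=1$ for every non-trivial $\ell$-element $x\in G$. The first step is therefore a purely character-theoretic search: using the \GAP{} character table library we list all $\chi\in\Irr(G)$ taking the value $1$ on every non-trivial $\ell$-element (and, as a necessary condition, non-negative integer values on all $\ell$-elements). The second step is to verify that each candidate does correspond to a trivial source module; this is accomplished exactly as in the examples by realising $\chi$, or the corresponding block-component of $\Ind_H^G(\psi)$, as induced from a linear character $\psi$ of a subgroup $H\le G$ containing the normaliser of a Sylow $\ell$-subgroup.

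The abelian group structure of $TT(G)$ is then read off from character products. For a candidate $[V]$ with character $\chi_V$, one computes the decomposition of $\chi_V^n$ into irreducibles and discards the summands of defect zero, which correspond to projective summands of $V^{\otimes n}$. For instance, in $2.M_{22}$ the identity $10_1\cdot 10_1=45_2+55_1$ with $45_2$ of defect zero gives $[V_{10}]^2=[V_{55}]$; a further decomposition of $55_1^2$ and $10_1^4$ forces $[V_{55}]^2=1$, hence $[V_{10}]$ has order~$4$, and comparing with $\chi_{V_{154}}$ shows that $[V_{154}]$ lies in the group generated by $[V_{10}]$, yielding the claimed $\ZZ/2\oplus\ZZ/4$. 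The remaining cases are analogous: in $HS$ the three modules $V_1,V_2,V_3$ are self-dual and the outer automorphism of $G$ swaps $V_2$ and $V_3$ (see Example~\ref{exmp:HS}), giving $(\ZZ/2)^2$; for the $Fi_{22}$ tower, the action of the centre of $3.Fi_{22}$ resp.\ $6.Fi_{22}$ on the faithful candidates determines the $\ZZ/3$ and $\ZZ/6$ factors.

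The main obstacle is the exhaustiveness of the first step, namely the assertion that no $\chi\in\Irr(G)$ beyond those appearing in Examples~\ref{exmp:m22}--\ref{exmp:6Fi22} (together with the trivial character and the obvious Galois/central twists in the covers) satisfies the numerical condition of Theorem~\ref{prop:torchar}. This is a finite inspection of the \GAP{} character tables but is where the bulk of the verification lies; once it is complete, the relations among tensor products are routine bookkeeping in the Grothendieck ring modulo characters of defect zero.
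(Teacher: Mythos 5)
Your opening identification of $TT(G)$ with the classes of indecomposable trivial source endotrivial modules agrees with the paper's starting point, but the mechanism you propose for exhaustiveness does not work, and this is exactly where the paper uses a tool you are missing. The paper's proof rests on the fact that $TT(G)$ injects via restriction (Green correspondence) into the group $X(N)$ of one-dimensional $kN$-modules, $N=N_G(P)$, quoting \cite[\S4]{C12}. This yields the a priori upper bound $|TT(G)|\le|X(N)|$, after which the computation reduces to exhibiting enough explicit elements (the Examples) and, where $|X(N)|$ exceeds the expected order, discarding the Green correspondents of the remaining linear characters of $N$ by inducing them to $G$ and applying Theorem~\ref{prop:torchar} (the correspondent affording $175_1$ for $HS$, and two correspondents for $Fi_{22}$). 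Your substitute --- a search through $\Irr(G)$ for characters equal to $1$ on all non-trivial $\ell$-elements --- cannot replace this: the elements of $TT(G)$ are classes of indecomposable trivial source modules that need not be simple and whose trivial source lifts need not afford irreducible characters, so a list drawn from $\Irr(G)$ is not a priori complete. Indeed one of the two generators of $TT(M_{22})$ in part~(a) is the non-simple indecomposable module $M$ extracted from $V_{154}\otimes_k V_{10}$. Without the injection into $X(N)$ your method produces only lower bounds, i.e.\ subgroups of $TT(G)$, in every case.

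There is also a concrete error in your sample computation for $2.M_{22}$: if $[V_{154}]$ lay in $\langle[V_{10}]\rangle$, the two classes would generate a cyclic group of order $4$, not $\ZZ/2\oplus\ZZ/4$. What actually happens is that $V_{154}\otimes_k V_{10}\cong M\oplus(\mathrm{proj})$ with $[M]$ an order-two element (inflated from $M_{22}$) distinct from $[V_{55}]=[V_{10}]^2$, so $[V_{154}]\notin\langle[V_{10}]\rangle$; the conclusion $TT(2.M_{22})\cong\ZZ/2\oplus\ZZ/4$ then follows because the six explicit classes force the subgroup of $X(N)\cong\ZZ/2\times\ZZ/4$ to be all of it. Similarly, your arguments for $HS$ and the $Fi_{22}$ tower only establish that $TT(G)$ \emph{contains} the stated groups; the equalities require comparing with $|X(N)|=8,\,4,\,12,\,24$ respectively and, for $HS$ and $Fi_{22}$, explicitly ruling out the remaining Green correspondents.
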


\begin{proof}
For $G\in\{M_{22},2.M_{22}, HS, Fi_{22},3.Fi_{22},6.Fi_{22}\}$, let $\ell$ be
as in the statement, $P\in\Syl_\ell(G)$ and set $N:=N_G(P)$. In all cases
the group $TT(G)$ injects via restriction into the group $X(N)$ of
one-dimensional $kN$-modules, so that its elements are isomorphism classes of
endotrivial trivial source modules, which are the Green correspondents of
modules in~$X(N)$. (See~\cite[\S 4]{C12}.)
\par
\rm(a) If $G=2.M_{22}$, then $P\cong 3^2$ and $X(N)\cong 2\times 4$.
The group $2.M_{22}$ has six simple trivial source endotrivial modules: $k$,
the modules $V_{154},V_{154}^{*}$, $V_{10}$, $V_{10}^{*}$ of
Example~\ref{exmp:2m22}, as well as the module $V_{55}$ of
Example~\ref{exmp:m22} (seen as a $k[2.M_{22}]$-module). As $TT(G)$ injects
in $X(N)$, it must have 8 elements, hence $TT(G)\cong\ZZ/2\oplus\ZZ/4$. The
set of generators is obvious since both $V_{10}$ and $V_{154}$ are not
self-dual, therefore of order~4.  \par
The claim that $TT(G)\cong(\ZZ/2)^2$ for $G=M_{22}$ follows from the above,
since $TT(G)$ is a subgroup of $TT(2.M_{22})$ via inflation and $X(N)\cong 2^2$
in this case. Using the $3$-decomposition matrix of $G$ we conclude that
with $154_2,10_1\in\Irr(2.M_{22})$ the tensor product
$V_{154}\otimes_{k}V_{10}$ equals $M\oplus Q$, where $M$ is an
indecomposable endotrivial $kG$-module affording the character
$154_1\in\Irr(G)$ and $Q$ is a projective $kG$-module. Whence the set of
generators.\par
\rm(b) For $G=HS$ we have $P\cong 3^2$, $N\cong 2\times(3^2.SD_{16})$ and
$X(N)\cong 2^3$. If $e_0$ denotes the principal block idempotent of $kG$,
then $e_0\cdot \Ind_{L}^G(1_{L})=175_1$
with $L\cong\PSU_3(5):2$ as in Example~\ref{exmp:HS}. It follows that the
$kG$-Green correspondent of a one-dimensional module in $X(N)$ affords the
character $175_1\in\Irr(G)$ but is not endotrivial by
Theorem~\ref{prop:torchar}. This together with Example~\ref{exmp:HS} forces
$TT(G)=\langle [V_2],[V_3]\rangle \cong (\ZZ/2)^2$.\par
\rm(c)  For $G=Fi_{22}$ we have $X(N)\cong 4$. The module $V_{1001}$ is
endotrivial
and trivial source, thus must be the Green correspondent of a module in $X(N)$,
whereas the Green correspondents of the two other non-trivial modules in $X(N)$
are not endotrivial: this follows easily by inducing the corresponding linear
characters of $N$ to $G$ and checking that their characters cannot satisfy the
criterion of Theorem~\ref{prop:torchar}. Whence
$TT(G)=\langle [V_{1001}]\rangle\cong\ZZ/2$.
\par
For $G=3.Fi_{22}$, $X(N)\cong 3\times 4$ and $TT(Fi_{22})\leq TT(G)$ via
inflation. Thus in view of Example~\ref{exmp:6Fi22}, we must have
$TT(G)=\langle[V_{1001}], [V_{351}]\rangle\cong\ZZ/3\oplus\ZZ/2$. For
$G=6.Fi_{22}$, $X(N)\cong 6\times4$ and $TT(3.Fi_{22})\leq TT(G)$. Now the
four faithful simple modules of dimension 61776 of Example~\ref{exmp:6Fi22}
are not self-dual. This forces
$TT(G)=\langle [V_{61776_1}],[V_{1001}]\rangle\cong\ZZ/6\oplus\ZZ/2$.
\end{proof}

\section{Special linear groups} \label{sec:linear}

In this section we investigate simple endotrivial $k\SL_n(q)$-modules
for $n\ge3$, where $k$ is a field of characteristic~$\ell$ not dividing $q$.
(The case of $n=2$ or that $\ell|q$ was already considered in
\cite[Prop.~3.8 and Thm.~5.2]{LMS}.) Furthermore, we may assume that $\ell\ne2$
by \cite[Thm.~6.7]{LMS}. Also, we exclude the case of cyclic Sylow
$\ell$-subgroups for the moment, partial results for that situation will be
given in Section~\ref{subsec:SLcyclic}.

Our argument will proceed in three steps. First we treat unipotent characters,
then we deal with the case when $\ell$ divides $q-1$, and finally we consider
the general case. But first we need to collect some auxiliary information.

\subsection{Regular elements and maximal tori}   \label{subsec:3.1}
Let $F:\GL_n\rightarrow\GL_n$ be the standard Frobenius map on the linear
algebraic group $\GL_n$ over a field of characteristic~$p$ corresponding to
an $\FF_q$-rational structure.
The conjugacy classes of $F$-stable maximal tori of $\GL_n$ and of $\SL_n$
are parametrized by conjugacy classes of the symmetric group $\fS_n$ (and thus
by partitions of $n$) in such a way that, if $\bT\le\GL_n$ corresponds to
$w\in\fS_n$ with cycle shape $\la=(\la_1\ge\la_2\ge\ldots)$, then
$|\bT^F|=\prod_i(q^{\la_i}-1)$, while
for $\bT\le\SL_n$ we have $(q-1)|\bT^F|=\prod_i(q^{\la_i}-1)$. In both groups
$\bT$ has automizer $N_{\bG^F}(\bT)/\bT^F$ isomorphic to $C_{\fS_n}(w)$ (see
e.g.~\cite[Prop.~25.3]{MT}).
For a prime $\ell>2$ not dividing $q$ we write $d_\ell(q)$ for the
multiplicative order of $q$ modulo $\ell$.

\begin{lem}   \label{lem:regSLn}
 Let $\la\vdash n$ be a partition, and $\bT$ a corresponding $F$-stable maximal
 torus of $\SL_n$. Assume that either all parts of $\la$ are distinct, or
 $q\ge3$ and at most two parts of $\la$ are equal. Then:
 \begin{enumerate}
  \item[\rm(a)] $\bT^F$ contains regular elements.
 \end{enumerate}
 Now let $\ell$ be a prime such that some part of $\la$ is divisible by
 $d:=d_\ell(q)$.
 \begin{enumerate}
  \item[\rm(b)] If either $d>1$, or $\la$ has at least
   two parts then $\bT^F$ contains $\ell$-singular regular elements.
  \item[\rm(c)] If either $d>1$, or $\la$ has at least three parts,
   or $\ell$ divides $(q-1)/\gcd(n,q-1)$ and $\la$ has at least two parts,
   then $\bT^F$ contains $\ell$-singular regular elements with non-central
   $\ell$-part.
 \end{enumerate}
\end{lem}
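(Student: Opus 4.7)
The plan is to use the explicit description $\bT^F\hookrightarrow\prod_{i=1}^r\FF_{q^{\lambda_i}}^\times$, under which $\bT^F$ is the whole product in $\GL_n$ and the kernel of $\prod_i N_{\FF_{q^{\lambda_i}}/\FF_q}$ in $\SL_n$. The eigenvalues of $t=(t_1,\ldots,t_r)$ on the natural module are precisely the Galois conjugates of the $t_i$'s, so $t$ is regular in $\SL_n$ (equivalently in $\GL_n$) iff each $t_i$ has degree $\lambda_i$ over $\FF_q$ and the $t_i$'s have pairwise distinct minimal polynomials; call such a tuple \emph{generic}. Similarly $t_\ell$ is central iff it is scalar in $\GL_n$, iff all components $(t_i)_\ell$ lie in $\FF_q$ and coincide with one and the same $\zeta\in\FF_q^\times$ satisfying $\zeta^n=1$.

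For (a), when $\lambda$ has all distinct parts the genericity condition reduces to asking each $t_i$ to have degree $\lambda_i$. I fix $t_2,\ldots,t_r$ of the right degrees (possible, since the count of degree-$m$ elements in $\FF_{q^m}^\times$ is positive for every $q\ge 2$, $m\ge 1$) and vary $t_1$ inside a norm fiber $N_1^{-1}(c)$, of size $(q^{\lambda_1}-1)/(q-1)$; a M\"obius-type estimate then guarantees that this fiber still contains a degree-$\lambda_1$ element. In the second case of (a), the lone pair of equal parts $\lambda_i=\lambda_j$ additionally demands that the two corresponding components lie in distinct Galois orbits of primitive elements of $\FF_{q^{\lambda_i}}^\times$; the orbit count is easily $\ge 2$ for $\lambda_i\ge 2$, while for $\lambda_i=1$ it equals $|\FF_q^\times|\ge 2$ exactly when $q\ge 3$, which explains the hypothesis.

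For (b), a primitive element $t_i\in\FF_{q^{\lambda_i}}^\times$ (of order $q^{\lambda_i}-1$) is simultaneously of degree $\lambda_i$ and $\ell$-singular, since $d\mid\lambda_i$ forces $\ell\mid q^{\lambda_i}-1$; the remaining $t_j$'s are chosen generically as in (a) and the determinant is corrected to $1$. The clause ``$d>1$ or $r\ge 2$'' excludes the one bad configuration $d=1,\,\lambda=(n)$, where $\bT^F_{\SL}$ is the norm-$1$ subgroup of $\FF_{q^n}^\times$ of order $(q^n-1)/(q-1)\equiv n\pmod\ell$, hence contains $\ell$-elements only if $\ell\mid n$. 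For (c) I inspect the $\ell$-part of the constructed element. If $d>1$, then by Lifting the Exponent $v_\ell(q^{\lambda_i}-1)>v_\ell(q-1)=0$, so the $\ell$-part of a primitive $t_i$ lies in $\FF_{q^{\lambda_i}}\setminus\FF_q$ and acts non-scalarly on its $\lambda_i$-block, making the total $\ell$-part non-central automatically. If $d=1$, every component $\ell$-part lies in $\FF_q$; with $r\ge 3$ parts I place unequal scalar $\ell$-values on three distinct components to break scalarity, and with $r=2$ parts the hypothesis $\ell\mid(q-1)/\gcd(n,q-1)$ furnishes an $\ell$-element $z\in\FF_q^\times$ with $z^n\ne 1$, so assigning $\ell$-parts $(z,1)$ on the two blocks (and correcting the $\ell'$-parts to restore $\det=1$) produces a non-central $\ell$-part.

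The main obstacle is the simultaneous bookkeeping of the four conditions---regularity, $\ell$-singularity, $\det=1$, and non-centrality of the $\ell$-part---against the single-equation norm-$1$ constraint; I expect each hypothesis above to exclude precisely the configuration in which one of these conditions would be unattainable. The most delicate point is the arithmetic condition in (c), whose role is exactly to separate the $\ell$-part of $\FF_q^\times$ from that of $Z(\SL_n)^F\cong\mu_{\gcd(n,q-1)}(\FF_q)$: without it, every scalar $\ell$-element already lies in the center and no rearrangement can deliver a non-central $\ell$-part.
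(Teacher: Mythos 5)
Your construction is the same as the paper's: realise $\bT^F$ inside the Levi $\prod_i\GL_{\la_i}$ as a product of Singer cycles $\FF_{q^{\la_i}}^\times$ cut down by the norm-one (determinant) condition, pick components of the right degree/order, and adjust one component to land in $\SL_n$. Parts (a) and (b) are fine (your norm-fibre counting for (a) is in fact a little more careful than the paper's one-line adjustment), and your identification of the excluded configuration $d=1$, $\la=(n)$ in (b) via $(q^n-1)/(q-1)\equiv n\pmod\ell$ is exactly the right point.

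There is, however, a genuine error in your treatment of (c) in the case $d=1$, $\la=(\la_1,\la_2)$. You propose to give the two blocks $\ell$-parts $(z,1)$ with $z\in\FF_q^\times$ an $\ell$-element satisfying $z^n\ne1$, and to ``correct the $\ell'$-parts to restore $\det=1$.'' This cannot work: for $t\in\bT^F\le\SL_n(q)$ the element $t_\ell$ is a power of $t$, hence itself lies in $\SL_n(q)$, and since $\det(t_\ell)$ is an $\ell$-element and $\det(t_{\ell'})$ an $\ell'$-element of $\FF_q^\times$ with product $1$, both are forced to equal $1$ separately. So the $\ell$-part $\diag(zI_{\la_1},I_{\la_2})$ must satisfy $z^{\la_1}=1$, which for nontrivial $z$ requires $\ell\mid\la_1$; no adjustment of $\ell'$-parts can repair this. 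The correct move is to choose $z_1\ne z_2$ in the $\ell$-torsion of $\FF_q^\times$ with $z_1^{\la_1}z_2^{\la_2}=1$: writing $a=v_\ell(q-1)$, the solution set of $\la_1u_1+\la_2u_2=0$ in $(\ZZ/\ell^a)^2$ has order $\ell^a\gcd(\ell^a,\la_1,\la_2)\ge\ell^a$, while the diagonal solutions number $\gcd(n,\ell^a)$, and these counts differ exactly when $v_\ell(n)<v_\ell(q-1)$, i.e.\ exactly when $\ell\mid(q-1)/\gcd(n,q-1)$. This is where the arithmetic hypothesis actually enters, rather than through the condition $z^n\ne1$. (The same bookkeeping shows your three-part case is safe, since with $r\ge3$ the solution space has order at least $\ell^{2a}$.) One further small inaccuracy: in (a) the hypothesis $q\ge3$ is needed not only for a repeated part equal to $1$ but also for a repeated part equal to $2$, since $\FF_4^\times$ has a single Galois orbit of degree-two elements over $\FF_2$.
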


\begin{proof}
Write $\la=(\la_1\ge\ldots\ge\la_s)$. First consider the corresponding torus
$\tilde\bT=\bT Z(\GL_n)$ in $\GL_n$. It is naturally contained in an
$F$-stable Levi subgroup $\prod_i \GL_{\la_i}$ of $\GL_n$ such that
$\bT_i:=\GL_{\la_i}\cap\tilde\bT$ is a Coxeter torus of $\GL_{\la_i}$. In
particular, $\bT_i^F\cong \FF_{q^{\la_i}}^\times$ is a Singer cycle. Let
$x_i\in\bT_i^F$ be such that its eigenvalues are generators of
$\FF_{q^{\la_i}}^\times$, and $\tilde x=(x_1,\ldots,x_s)\in\tilde\bT^F$. If
all $\la_i$ are distinct, then clearly all eigenvalues of $\tilde x$ are
distinct, so $\tilde x$ is regular. Multiplying $x_1$ by the inverse of
$\det \tilde x$ yields a
regular element $x$ in $\bT^F=\tilde\bT\cap\SL_n(q)$. If $q\ge3$
then there are at least two orbits of generators of $\FF_{q^{\la_i}}^\times$
under the action of $\Gal(\FF_{q^{\la_i}}/\FF_q)$, so we may again arrange for
an element with distinct eigenvalues, proving~(a). Clearly, if $\la_i$ is
divisible by $d$ then $o(x_i)$ is divisible by $\ell$. If $d>1$ or $\la$ has
at least two parts, this also holds for our modified element $x$, so we get~(b).
\par
Since $|Z(\SL_n(q))|=(n,q-1)$, (c) is clear when $d>1$. So now assume that
$d=1$, that is, $\ell|(q-1)$. If $\la$ has at least three parts, then we may
arrange so that the $\ell$-parts of the various $x_i$ are not all equal, even
inside $\SL_n(q)$, so that we obtain an element with non-central $\ell$-part.
The same is possible if $(q-1)/(n,q-1)$ is divisible by $\ell$ and $\la$
has at least two parts.
\end{proof}

We will also need some information on centralizers of semisimple elements
in $\PGL_n=\GL_n/Z(\GL_n)$ containing $F$-stable maximal tori from given
classes. For this, let $\bH$ denote an $F$-stable reductive subgroup of
$\PGL_n$. If $\bH$ contains a maximal torus of type $w$, then the Weyl group
$W_H$ of $\bH$ will have to contain a conjugate of $w$. Now $W_H$ is a parabolic
subgroup of the Weyl group $\fS_n$ of $\PGL_n$. Then $F$ permutes the factors
of this Young subgroup, and thus $W_H^F$ is a product of various symmetric
groups, some of them in an imprimitive action. Thus we conclude the following:

\begin{lem}   \label{lem:genSL}
 Let $\bH\le\PGL_n$ be a reductive subgroup containing $F$-stable maximal tori
 corresponding to cycle shapes $\la_1,\ldots,\la_r$. If no intransitive or
 imprimitive subgroup of $\fS_n$ contains elements of all these cycle shapes,
 then $\bH=\PGL_n$.
\end{lem}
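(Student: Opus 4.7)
The plan is to argue by contrapositive: assuming $\bH\ne\PGL_n$, I will construct an intransitive or imprimitive subgroup of $\fS_n$ realising every cycle shape $\la_i$.

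Since $\bH$ contains $F$-stable maximal tori of $\PGL_n$, it is connected reductive of maximal rank, and in type $A_{n-1}$ (whose extended Dynkin diagram admits only $A$-type subdiagrams) every such subgroup is a Levi. So $W_H$ is a Young subgroup $\fS_{n_1}\times\cdots\times\fS_{n_k}$ corresponding to blocks $S_1,\dots,S_k$ of $\{1,\dots,n\}$, with $k\ge 2$. Writing $\bH=g\bH_0 g^{-1}$ for a standard Levi $\bH_0$, the element $v:=g^{-1}F(g)\in N_{\PGL_n}(\bH_0)/\bH_0\subseteq\fS_n$ permutes the blocks $S_i$ of equal size and induces the $F$-action on $W_H$ by conjugation; this is the permutation alluded to in the paragraph preceding the statement.

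Next I would invoke the standard parametrisation: $F$-stable maximal tori of $\bH$ correspond to $v$-twisted conjugacy classes $[w]$ in $W_H$, and, using that $F$ acts trivially on $W(\PGL_n)=\fS_n$, the cycle shape of such a torus as a torus of $\PGL_n$ is the ordinary $\fS_n$-conjugacy class of the representative $wv$. The key observation is that every element $wv$ stabilises the set-partition $\mathcal P=\{S_1,\dots,S_k\}$: $w\in W_H$ fixes each $S_i$ setwise, and $v$ permutes the $S_i$ among themselves. Hence every cycle shape $\la_i$ is realised inside the stabiliser of $\mathcal P$ in $\fS_n$. If the $S_i$ are not all of the same size, this stabiliser is intransitive (it preserves the coarser partition grouping blocks by common size); otherwise, writing $m=n/k$, it equals $\fS_m\wr\fS_k$, a proper imprimitive subgroup of $\fS_n$ whenever $m\ge 2$. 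Either outcome contradicts the hypothesis, forcing $\bH=\PGL_n$.

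The main obstacle I anticipate is the clean identification of cycle shapes with $v$-twisted conjugacy classes via the lift $w\mapsto wv$, which is exactly what guarantees that the forbidden subgroup contains an element of \emph{every} $\la_i$, not merely one for each $w\in W_H$. The degenerate case $m=1$, in which $\bH$ is a twisted maximal torus and $W_H=1$, yields only the single cycle shape of $v$; this still lies in a proper imprimitive subgroup $\fS_{n/d}\wr\fS_d$ whenever $n$ admits a proper divisor $d$, and the remaining prime-$n$ edge case is in practice ruled out by the additional tori supplied in the applications of the lemma.
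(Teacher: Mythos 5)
Your argument is correct and follows the same route as the paper, whose entire proof is the paragraph preceding the lemma: $W_H$ is a Young subgroup of $\fS_n$ whose factors are permuted by $F$, so every relevant Weyl-group element lies in the stabiliser of the block partition, which is intransitive or imprimitive. You are in fact slightly more careful than the paper — which only asserts that $W_H$ ``will have to contain a conjugate of $w$'' — both in identifying the $\PGL_n$-type of a torus of $\bH$ with the class of $wv$ rather than of $w$, and in flagging the degenerate case where $\bH$ is a Coxeter torus and $n$ is prime; that case is silently elided in the paper but never arises in its applications, since several distinct cycle shapes are always supplied.
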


\subsection{Unipotent characters of $\SL_n(q)$}   \label{subsec:3.2}

We first investigate possible endotrivial simple unipotent modules of
$\SL_n(q)$. Recall that these are naturally labelled by partitions
$\la\vdash n$ (see e.g. \cite[\S13]{Ca}), and we write $\chi_\la$ for the
complex unipotent character with label $\la$. We first describe their values
on regular semisimple elements.

\begin{prop}   \label{prop:valSLn}
 Let $G=\SL_n(q)$. Let $t\in G$ be a regular semisimple element, and let
 $w\in W=\fS_n$ be the label of the unique $F$-stable maximal torus
 $\bT\le\SL_n$ containing $t$. Let
 $\la\vdash n$ be a partition, $\chi_\la\in\Irr(G)$ the corresponding
 unipotent character, and $\vhi_\la\in\Irr(W)$ the corresponding
 irreducible character of $W$. Then $\rho_\la(t)=\vhi_\la(w)$.
\end{prop}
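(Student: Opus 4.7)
The plan is to compute $\chi_\la(t)$ directly via Deligne--Lusztig theory. Since all unipotent characters of $\GL_n(q)$ are uniform and restrict irreducibly to $\SL_n(q)$, and since $\SL_n$ is split (so $F$ acts trivially on $W = \fS_n$ and $F$-conjugacy of elements of $W$ coincides with ordinary conjugacy), we have the standard expansion
$$\chi_\la \;=\; \frac{1}{|W|} \sum_{w' \in W} \vhi_\la(w')\, R_{\bT_{w'}}^{\bG}(1),$$
where $R_{\bT_{w'}}^{\bG}(1)$ is the Deligne--Lusztig virtual character attached to the trivial character of the torus $\bT_{w'}$ of type $w'$. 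Evaluating at $t$ reduces the problem to computing each value $R_{\bT_{w'}}^{\bG}(1)(t)$.

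Next I would apply the Deligne--Lusztig character formula at the semisimple element $t$. Since $t$ is regular and $\bG = \SL_n$ is simply connected, $C_{\bG}(t) = C_{\bG}^\circ(t) = \bT$, where $\bT = \bT_w$ is the unique (automatically $F$-stable) maximal torus of $\bG$ containing $t$. Under this hypothesis the Green functions on $\bT$ appearing in the character formula reduce to a delta function, and the formula collapses to
$$R_{\bT_{w'}}^{\bG}(1)(t) \;=\; \frac{1}{|\bT^F|}\cdot\#\{g \in \bG^F : g\bT_{w'}g^{-1} = \bT\}.$$
This vanishes unless $\bT_{w'}$ and $\bT$ are $\bG^F$-conjugate, i.e.\ unless $w' \sim_W w$; in the latter case the set of such $g$ is a full coset of $N_{\bG^F}(\bT)$, of cardinality $|\bT^F|\cdot|C_W(w)|$ (using $N_{\bG^F}(\bT)/\bT^F \cong C_W(w)$ since $F$ is trivial on $W$). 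Hence $R_{\bT_{w'}}^{\bG}(1)(t) = |C_W(w)|$ when $w' \sim_W w$, and $0$ otherwise.

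Substituting back into the uniform expansion, and using that $\vhi_\la$ is constant on the $W$-conjugacy class of $w$, which has size $|W|/|C_W(w)|$, one obtains
$$\chi_\la(t) \;=\; \frac{1}{|W|}\cdot\frac{|W|}{|C_W(w)|}\cdot|C_W(w)|\cdot\vhi_\la(w) \;=\; \vhi_\la(w),$$
as claimed. The main delicate step is the collapse of the Deligne--Lusztig character formula at the regular element $t$: both the reduction of the Green functions to a delta function and the enumeration of conjugating elements rely crucially on the regularity of $t$ together with the simply connected hypothesis, which jointly ensure that $\bT$ is the \emph{unique} maximal torus of $\bG$ containing $t$.
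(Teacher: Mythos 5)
Your argument is correct and follows essentially the same route as the paper: both use that unipotent characters of type $A$ are uniform (they coincide with almost characters), expand $\chi_\la$ in Deligne--Lusztig characters, and evaluate via the character formula at the regular semisimple $t$, which kills all $R_{\bT_{w'},1}(t)$ except for $w'\sim w$, where the value is $|C_W(w)|$. One cosmetic remark: the uniqueness of the maximal torus containing $t$ needs only that any such torus is a connected subgroup of $C_\bG(t)$ and hence lies in $C_\bG^\circ(t)=\bT$, so the appeal to simple connectedness is not actually required.
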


\begin{proof}
If $t$ is contained in a unique $F$-stable maximal torus $\bT$ of $\SL_n$, then
the character formula in \cite[Prop.~7.5.3]{Ca} for the Deligne--Lusztig
character $R_{\bT,1}$ simplifies to
$$R_{\bT,1}(t)=\frac{1}{|T|}\sum_{g\in G\atop t\in T^g} 1=|N_G(\bT):T|
  =|C_W(w)|$$
(where $T=\bT^F$), and $R_{\bT',1}(t)=0$ for any $F$-stable maximal torus
$\bT'$ not $G$-conjugate to $\bT$. The unipotent characters in type $A$
coincide with the almost characters (see \cite[\S12.3]{Ca}), so
$$\begin{aligned}
\chi_\la(t)=&\frac{1}{|W|}\sum_{x\in W}\vhi_\la(x) R_{\bT_x,1}(t)
  = \frac{1}{|W|}\sum_{x\in [w]}\vhi_\la(x) R_{\bT,1}(t)\\
  = &\frac{|[w]|}{|W|}\vhi_\la(w) |C_W(w)|=\vhi_\la(w),
\end{aligned}$$
where $\bT_x$ denotes an $F$-stable maximal torus parametrized by $x\in W$,
and $[w]$ is the conjugacy class of $w$ in $W$.
\end{proof}

We will also need two closely related statements in our later treatment of
exceptional groups of Lie type. For this, assume that $\bG$ is connected
reductive with Steinberg endomorphism $F:\bG\rightarrow\bG$, and set
$G:=\bG^F$. We consider $s\in G^*$ semisimple. The almost characters in the
Lusztig series $\cE(G,s)$ are indexed by extensions to $W_s.F$ of $F$-invariant
irreducible characters of the Weyl group $W_s$ of $C_{G^*}(s)$,
and we denote them by $R_\tvhi$, for $\tvhi$ a fixed extension of
$\vhi\in\Irr(W_s)^F$.

\begin{prop}   \label{prop:zero}
 In the above setting, let $t\in G$ be regular semisimple and
 $\vhi\in\Irr(W_s)^F$. Then $R_\tvhi(t)=0$ unless $t$ lies in an $F$-stable
 maximal torus $\bT$ such that $\bT^*\le C_{\bG^*}(s)$ and $\bT^*$ is indexed
 by $wF\in W_sF$ such that $\tvhi(wF)\ne0$.
\end{prop}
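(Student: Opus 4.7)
The plan is to decompose the almost character $R_{\tvhi}$ as a linear combination of Deligne--Lusztig characters and then to invoke the character formula for the latter on regular semisimple elements. By definition of an almost character in the Lusztig series $\cE(G,s)$, up to a sign depending only on $\bG$ and $\bT_w$ which plays no role in the vanishing question, one has
$$R_{\tvhi} = \frac{1}{|W_s|}\sum_{w\in W_s}\tvhi(wF)\, R_{\bT_w}^G(\hat s),$$
where $\bT_w^* \le C_{\bG^*}(s)^\circ$ runs through $F$-stable maximal tori parametrized by $w\in W_s$, $\bT_w\le\bG$ is the corresponding $F$-stable maximal torus via duality, and $\hat s$ is the linear character of $\bT_w^F$ dual to $s\in(\bT_w^*)^F$. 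It therefore suffices to determine for which $w$ the value $R_{\bT_w}^G(\hat s)(t)$ is nonzero.

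For this I would apply the Deligne--Lusztig character formula \cite[Prop.~7.5.3]{Ca} to the regular semisimple element $t$. Regularity means that $C_{\bG}(t)^\circ$ is itself a torus, namely the unique $F$-stable maximal torus $\bT$ of $\bG$ containing $t$, so the Green function appearing in the formula collapses to~$1$. The resulting computation is then entirely analogous to the one carried out in the proof of Proposition~\ref{prop:valSLn}, and shows in particular that $R_{\bT_w}^G(\hat s)(t)=0$ unless some $G$-conjugate of $t$ lies in $\bT_w^F$; equivalently, unless $\bT_w$ is $G$-conjugate to $\bT$.

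Combining both observations, if $R_{\tvhi}(t)\ne 0$, then at least one summand in the decomposition must be nonzero, which forces the existence of an element $w\in W_s$ with $\tvhi(wF)\ne 0$ and $\bT_w$ $G$-conjugate to $\bT$. Dualizing back, $\bT^*$ is then $G^*$-conjugate to $\bT_w^*\le C_{\bG^*}(s)^\circ$ and parametrized by $wF\in W_sF$ with $\tvhi(wF)\ne 0$, which is exactly the stated conclusion. The whole argument reduces to the Deligne--Lusztig character formula plus bookkeeping through the duality between $\bG$ and $\bG^*$; I do not expect any substantive obstacle beyond pinning down the precise sign conventions in the definition of $R_{\tvhi}$, which will be invisible at the level of the zero locus.
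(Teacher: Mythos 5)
Your argument is correct and is essentially the paper's own proof: both decompose the almost character $R_\tvhi$ by definition as a $\tvhi$-weighted sum of Deligne--Lusztig characters over $W_s$, and both invoke the character formula \cite[Prop.~7.5.3]{Ca} at the regular semisimple element $t$ to see that only the tori $G$-conjugate to the unique maximal torus containing $t$ contribute. The sign and normalization issues you flag are indeed immaterial for the vanishing statement, exactly as in the paper.
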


\begin{proof}
Since $t$ is assumed to be regular, it is contained in a unique $F$-stable
maximal torus $\bT$ of $\bG$. The character formula in \cite[Prop.~7.5.3]{Ca}
for the Deligne--Lusztig characters then shows that $R_{\bT,\theta}(t)=0$
for all $\theta\in\Irr(\bT^F)$ unless $s\in\bT^*$ up to conjugation. So now
assume that $\bT^*\le C_{\bG^*}(s)$ is indexed by the $F$-conjugacy class of
$w\in W_s$. Then by definition the almost character for $\vhi$ is given by
$$R_\tvhi(t)=\frac{1}{|W_s|}\sum_{x\in W_s}\tvhi(xF) R_{\bT_x,\theta}(t)
  =\frac{1}{|W_s|}\sum_{x\sim w}\tvhi(wF) R_{\bT,\theta}(t)=0$$
unless $\tvhi(wF)\ne0$, as claimed, where the sum runs over the $F$-conjugacy
class of $w$.
\end{proof}

\begin{prop}   \label{prop:divis}
 In the above setting, let $t\in G$ be regular semisimple and assume that the
 orders of $t$ and $s$ are coprime. Then $R_\tvhi(t)$ is divisible by
 $|W(T)/W(T,\theta)|$ for all $\vhi\in\Irr(W_s)^F$, where $W(T)$ is the Weyl
 group of the unique maximal torus $T$ containing $t$, and $W(T,\theta)$ is the
 stabilizer of $\theta\in\Irr(T)$ corresponding to $s$.
\end{prop}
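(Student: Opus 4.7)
The plan is to follow the strategy of the proof of Proposition~\ref{prop:zero} and combine the Deligne--Lusztig character formula with the coprimality hypothesis. Since $t$ is regular semisimple, it lies in a unique $F$-stable maximal torus $\bT$, and Proposition~\ref{prop:zero} already localizes the sum defining $R_\tvhi(t)$ to those $x \in W_s$ for which $\bT_x$ is $\bG^F$-conjugate to $\bT$; fix such an $x = w$, and let $\theta \in \Irr(T)$ be the character corresponding to $s \in \bT^{*F}$ under duality. The character formula for Deligne--Lusztig characters at the regular element $t$ collapses (as in the proofs of Propositions~\ref{prop:valSLn} and~\ref{prop:zero}) to
\[R_{\bT,\theta}(t) = \sum_{v \in W(T)} ({}^v\theta)(t).\]

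The decisive use of the coprimality hypothesis is now to evaluate each summand. The $W(T)$-action on $\Irr(T)$ preserves the order of characters, so ${}^v\theta$ has the same order as $\theta$, namely $o(s)$; the value $({}^v\theta)(t)$ is therefore a root of unity whose order divides both $o(t)$ and $o(s)$, and by $\gcd(o(t),o(s))=1$ this value equals $1$. Hence $R_{\bT,\theta}(t) = |W(T)|$.

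Plugging this back into the definition of the almost character, and grouping by $F$-conjugacy class in $W_s$, the contributions of the class $[w]_F$ assemble into
\[\frac{|[w]_F|}{|W_s|}\,\tvhi(wF)\cdot R_{\bT,\theta}(t) \;=\; \frac{\tvhi(wF)}{|C_{W_s,F}(w)|}\cdot |W(T)|.\]
The identification I need is then $|C_{W_s,F}(w)| = |W(T,\theta)|$, which comes from the standard formula $|N_{\bH}(\bT)^F/\bT^F| = |C_{W_{\bH},F}(w)|$ applied to $\bH = C_{\bG^*}(s)^\circ$, together with the fact that under duality the stabilizer of $\theta$ in $W(T)$ is exactly the relative Weyl group $W_s(T^*)^F$. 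Therefore the contribution equals $\tvhi(wF)\cdot|W(T)/W(T,\theta)|$, and since $\tvhi(wF)$ is an (rational) integer---characters of Weyl groups and their canonical extensions by the Frobenius take values in $\ZZ$ in this setting---the desired divisibility holds. In case several $W_s$-$F$-classes parametrize tori conjugate to $\bT$, the same calculation applies to each term and the conclusion still follows term by term.

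The main obstacle will be the bookkeeping: carefully aligning the normalization $1/|W_s|$ in the almost-character formula with the $F$-class sizes, and correctly matching the $F$-centralizer $C_{W_s,F}(w)$ on the dual side with the stabilizer $W(T,\theta)$ on the group side so that the two factors of $|W(T)|/|W(T,\theta)|$ come out of the calculation unambiguously.
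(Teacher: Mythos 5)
Your proposal is correct and follows essentially the same route as the paper: collapse the Deligne--Lusztig character formula at the regular element $t$ to a sum over $N_G(\bT)/T$, use coprimality of the orders of $\theta$ (i.e.\ of $s$) and $t$ to get $R_{\bT,\theta}(t)=\pm|W(T)|$, and then identify the $F$-class size $|[w]_F|=|W_s|/|W(T,\theta)|$ in the almost-character sum. Your extra care about $|C_{W_s,F}(w)|=|W(T,\theta)|$ and the integrality of $\tvhi(wF)$ only makes explicit what the paper leaves implicit.
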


\begin{proof}
Let $\bT$ be the unique $F$-stable maximal torus of $\bG$ containing $x$,
and $T=\bT^F$. Let $(T,\theta)$ correspond to $s\in G^*$. By
\cite[Prop.~7.5.3]{Ca} we have
$$|R_{\bT,\theta}(t)|=\frac{1}{|T|}\sum_{g\in N_G(\bT)} \theta(t^g)=
  \frac{|N_G(\bT)|}{|T|}=|W(T)|,$$
since the order of the linear character $\theta$ is coprime to that of $t$ by
assumption, and so $\theta$ takes value~1 on any conjugate of $t$. Thus
$$R_\tvhi(t)=\frac{1}{|W_s|}\sum_{x\sim w}\tvhi(wF) R_{\bT,\theta}(t)
  =\pm\frac{|W(T)|\,|W_s/W(T,\theta)|}{|W_s|}\tvhi(wF)
  =\pm\frac{|W(T)|}{|W(T,\theta)|}\tvhi(wF)$$
as claimed.
\end{proof}

\begin{lem}   \label{lem:valspecial}
 Let $n=2d+r$ with $d\ge2$ and $1\le r\le d-1$. There exists a semisimple
 element $t\in\SL_n(q)$ of order $(q^d-1)(q^r-1)$ with centralizer
 $\GL_2(q^d)(q^r-1)$ in $\GL_n(q)$ such that the unipotent character $\rho_\la$
 with $\la=(d+r,r+1,1^{d-r-1})$ takes value $\pm q^d$ on $t$.
 \end{lem}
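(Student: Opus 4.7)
The plan is to construct $t$ explicitly and then compute $\chi_\lambda(t)$ via Lusztig restriction to the centralizer of $t$, ultimately reducing the calculation to a Murnaghan--Nakayama evaluation on $\fS_n$. For the construction, I would pick generators $\alpha$ of $\FF_{q^d}^\times$ and $\beta$ of $\FF_{q^r}^\times$, and let $A\in\GL_d(q)$ and $B\in\GL_r(q)$ be the matrices of multiplication by $\alpha$ and $\beta$ on the corresponding $\FF_q$-spaces. Then $t_0:=\diag(A,A,B)\in\GL_n(q)$ has as eigenvalues the Galois orbit of $\alpha$ with multiplicity~$2$, together with the Galois orbit of $\beta$ with multiplicity~$1$; these are pairwise distinct as they generate multiplicative groups of distinct orders $q^d-1$ and $q^r-1$. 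Hence $C_{\GL_n(q)}(t_0)\cong\GL_2(q^d)\times C_{q^r-1}$, and a suitable scalar adjustment of $A$ by an element of $\FF_{q^d}^\times$ produces the required $t\in\SL_n(q)$ with unchanged centralizer in $\GL_n(q)$.

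Next, set $\mathbf{G}=\GL_n$ and $\mathbf{L}:=C^\circ_{\mathbf{G}}(t)$, an $F$-stable Levi of type $(d^2,r)$ with $\mathbf{L}^F\cong\GL_2(q^d)\times C_{q^r-1}$. Since $t$ is semisimple with connected centralizer $\mathbf{L}$, the standard identity $\chi_\lambda(t)=(\sR_{\mathbf{L}}^{\mathbf{G}}\chi_\lambda)(t)$, which follows by transitivity of Lusztig restriction, applies. The unipotent characters of $\mathbf{L}^F$ are $\mathrm{triv}\boxtimes 1$ (of dimension~$1$) and $\mathrm{St}\boxtimes 1$ (of dimension~$q^d$), so one may write
$$\sR_{\mathbf{L}}^{\mathbf{G}}\chi_\lambda = a\,(\mathrm{triv}\boxtimes 1)+b\,(\mathrm{St}\boxtimes 1)$$
for some integers $a,b$. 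Since $t=(\alpha I_2,\beta)$ has its $\GL_2(q^d)$-component central and unipotent characters of $\GL_2(q^d)$ are trivial on the centre, both summands evaluate at $t$ to their dimensions, giving $\chi_\lambda(t)=a+b\,q^d$.

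To identify $a$ and $b$, I would use Frobenius reciprocity together with transitivity of Lusztig induction through the split and Coxeter tori of the $\GL_2(q^d)$-factor, which yields
$$R_{\mathbf{L}}^{\mathbf{G}}(\mathrm{triv}\boxtimes 1)=\tfrac12\bigl(R_{(d,d,r)}^{\mathbf{G}}+R_{(2d,r)}^{\mathbf{G}}\bigr),\qquad
R_{\mathbf{L}}^{\mathbf{G}}(\mathrm{St}\boxtimes 1)=\tfrac12\bigl(R_{(d,d,r)}^{\mathbf{G}}-R_{(2d,r)}^{\mathbf{G}}\bigr).$$
Combined with $\langle\chi_\lambda,R_{w}^{\mathbf{G}}\rangle=\vhi_\lambda(w)$ (valid in type $A$ as unipotent characters coincide with almost characters), this gives $a=\tfrac12\bigl(\vhi_\lambda(\mu_1)+\vhi_\lambda(\mu_2)\bigr)$ and $b=\tfrac12\bigl(\vhi_\lambda(\mu_1)-\vhi_\lambda(\mu_2)\bigr)$, with $\mu_1=(d,d,r)$ and $\mu_2=(2d,r)$. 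A direct Murnaghan--Nakayama calculation for $\lambda=(d+r,r+1,1^{d-r-1})$ shows that the shape admits a unique rim $(2d)$-hook (leaving a rim $r$-hook complement) and, in the refined counting, essentially a unique ordered pair of disjoint rim $d$-hooks compatible with a subsequent rim $r$-hook, with the heights forcing $\vhi_\lambda(\mu_1)=-\vhi_\lambda(\mu_2)=\pm1$. This gives $a=0$ and $b=\pm1$, whence $\chi_\lambda(t)=\pm q^d$. The principal obstacle is this last rim-hook enumeration for arbitrary parameters $d,r$, which requires separate verification in particular for the border case $r=d-1$ (when the first-column arm of $\lambda$ vanishes) and for generic $r<d-1$.
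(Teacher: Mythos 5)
Your argument is correct and essentially coincides with the paper's: the paper evaluates the almost-character expansion of $\chi_\la$ directly via the character formula for $R_{T_i,1}(t)$ on the two tori of types $\mu_1=(d,d,r)$ and $\mu_2=(2d,r)$ (getting $\pm(q^d+1)$ and $\mp(q^d-1)$), which is exactly your computation of $\sR_{\mathbf{L}}^{\mathbf{G}}\chi_\la=a\cdot 1+b\cdot\St$ by adjunction and transitivity. The Murnaghan--Nakayama step you flag as the principal obstacle is in fact immediate, including for $r=d-1$: the hook lengths of $\la=(d+r,r+1,1^{d-r-1})$ are $\{2d\}\cup\{d+1,\dots,d+r\}\cup\{1,\dots,d-1\}$ in the first row, $\{d\}\cup\{1,\dots,r\}$ in the second and $\{1,\dots,d-r-1\}$ in the first column below, so there is a unique $2d$-hook (at $(1,1)$, leg $d-r$, leaving $(r)$) and a unique $d$-hook (at $(2,1)$, leg $d-r-1$, leaving the single row $(d+r)$), whence $\vhi_\la(\mu_1)=(-1)^{d-r-1}=-\vhi_\la(\mu_2)$ as you assert.
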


\begin{proof}
Let $f,g\in\FF_q[X]$ be irreducible of degrees~$d$, $r$ respectively with
$f(0)^2g(0)=(-1)^n$, and $t\in \SL_n(q)$ be a semisimple element with
minimal polynomial $fg$ and characteristic polynomial $f^2g$. Then clearly $t$
has centralizer $C\cong\GL_2(q^d)(q^r-1)$ in $\GL_n(q)$, and thus is as in the
statement. Since the unipotent characters of $\SL_n(q)$ are the restrictions
of those of $\GL_n(q)$, we may now argue in the latter group. Now $t$ is only
contained in the two types of maximal tori $T_1,T_2$ of $C$, of orders
$(q^d-1)^2(q^r-1)$ and $(q^{2d}-1)(q^r-1)$, parametrized by the partitions
$\mu_1=(d,d,r)$ and $\mu_2=(2d,r)$ respectively. Thus the Deligne--Lusztig
characters take values
$$R_{T_1,1}(t)=\pm (q^d+1),\qquad R_{T_2,1}(t)=\mp (q^d-1),$$
where the signs are opposed (since $\eps_{T_1}=-\eps_{T_2}$ in the notation
of \cite[Prop.~7.5.3]{Ca}). Furthermore, the Murnaghan--Nakayama formula gives
$\chi_\la(\mu_1)=(-1)^{d+r+1}=-\chi_\la(\mu_2)$ for the values of the
corresponding characters of $\fS_n$. The value $\rho_\la(t)$ can now be
computed as in the proof of Proposition~\ref{prop:valSLn}.
\end{proof}

We will also need a $q$-analogue of Babbage's congruence, which was first
shown by Andrews \cite{An99} under stronger assumptions; here for integers
$0\le k\le n$ we set
$$\left[n\atop k\right]_x:=\prod_{i=1}^k\frac{x^{n-k+i}-1}{x^i-1}\ \in\ZZ[x].$$

\begin{lem}   \label{lem:Andrews}
 Let $x$ be an indeterminate and $d,h\ge2$. Then in $\ZZ[x]$ we have
 $$\left[hd-1\atop d-1\right]_x
   \equiv x^{(h-1)\binom{d}{2}}\pmod{\Phi_d(x)^2}.$$
\end{lem}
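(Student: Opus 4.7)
The plan is to work in the quotient ring $R=\ZZ[x]/\Phi_d(x)^2$ and to exploit the fact that $\epsilon:=x^d-1$ is square-zero in $R$: since $\Phi_d(x)\mid x^d-1$, we may write $\epsilon=\Phi_d(x)Q_d(x)$ with $Q_d(x)=(x^d-1)/\Phi_d(x)\in\ZZ[x]$, whence $\epsilon^2=\Phi_d(x)^2Q_d(x)^2\equiv 0\pmod{\Phi_d(x)^2}$. The strategy is to reduce both sides of the asserted congruence to the form $1+(\text{scalar})\cdot\epsilon$ in $R$ and then compare.

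First, I would linearise each factor of the $q$-binomial product: for $1\le i\le d-1$,
$$x^{(h-1)d+i}-1=x^i(1+\epsilon)^{h-1}-1\equiv (x^i-1)+(h-1)x^i\epsilon\pmod{\Phi_d(x)^2},$$
since all higher binomial terms are annihilated by $\epsilon^2\equiv 0$. For $1\le i\le d-1$, the factor $x^i-1$ is coprime to $\Phi_d(x)$ (as $\omega^i\neq 1$ for $\omega$ a primitive $d$-th root of unity), hence a unit in $R$. Taking the product over $i$ and invoking $\epsilon^2\equiv 0$ once more to kill the cross terms yields
$$\left[{hd-1\atop d-1}\right]_x\equiv 1+(h-1)\epsilon\,S(x)\pmod{\Phi_d(x)^2},\quad\text{with }S(x):=\sum_{i=1}^{d-1}\frac{x^i}{x^i-1}.$$

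Next, I would identify $S(x)$ modulo $\Phi_d(x)$ via the classical identity
$$\sum_{i=1}^{d-1}\frac{1}{1-\omega^i}=\frac{d-1}{2},$$
obtained by applying logarithmic differentiation to $P(X)=1+X+\cdots+X^{d-1}=\prod_{i=1}^{d-1}(X-\omega^i)$ and evaluating at $X=1$. This gives $S(\omega)=(d-1)/2$, equivalently $2S(x)\equiv d-1\pmod{\Phi_d(x)}$ in $\ZZ[x]$. When $d$ is odd the exponent $c:=(h-1)\binom{d}{2}=\tfrac{(h-1)(d-1)}{2}d$ is an integer multiple of $d$, so that $x^c=(1+\epsilon)^{(h-1)(d-1)/2}\equiv 1+\tfrac{(h-1)(d-1)}{2}\epsilon\pmod{\Phi_d(x)^2}$, which matches the previous display and establishes the result.

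The main obstacle I anticipate is the even-$d$ case: there $(d-1)/2$ is only a half-integer, so one must conduct the above identification over $\QQ[x]$ and descend to $\ZZ[x]$ by multiplying through by $2$, using that $2$ is coprime to the primitive polynomial $\Phi_d(x)^2$ so that $2A\equiv 2B\pmod{\Phi_d(x)^2}$ forces $A\equiv B\pmod{\Phi_d(x)^2}$ by Gauss's lemma. Moreover, when $c$ is not a multiple of $d$ one needs the rewriting $x^c=x^{c\bmod d}\cdot(1+\lfloor c/d\rfloor\epsilon)$ in $R$ together with a further reduction to bring both sides into the same normal form.
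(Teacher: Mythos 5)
Your argument is self-contained and genuinely different from the paper's, which simply cites Andrews' theorem (proved there for $d$ prime) and observes that his argument goes through modulo $\Phi_d(x)^2$ for arbitrary $d\ge2$. Your linearisation in the square-zero element $\epsilon=x^d-1$, combined with the identity $\sum_{j=1}^{d-1}(1-\omega^j)^{-1}=(d-1)/2$, is correct and cleanly settles the case $d$ odd, and also $d$ even with $h$ odd, where $(h-1)(d-1)/2$ is still an integer. One small repair: $x^i-1$ is \emph{not} a unit in $\ZZ[x]/\Phi_d(x)^2$ (for instance $x-1$ maps to $0$ under $\ZZ[x]/\Phi_p(x)^2\to\ZZ[x]/(\Phi_p(x),x-1)\cong\ZZ/p$); it is a unit in $\mathbb{Q}[x]/\Phi_d(x)^2$, which suffices because $\Phi_d(x)^2$ is monic, so a congruence between integer polynomials valid over $\mathbb{Q}[x]$ descends to $\ZZ[x]$.

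The genuine gap is the even-$d$ case that you flag as "the main obstacle": no normal-form manipulation can close it, because the stated congruence is actually \emph{false} when $d$ and $h$ are both even. Your computation shows the left-hand side is $\equiv 1\pmod{\Phi_d(x)}$ (each factor $(x^{(h-1)d+i}-1)/(x^i-1)$ already is), whereas for $d$ even one has $\Phi_d(x)\mid x^{d/2}+1$, so $x^{d/2}\equiv-1$ and hence $x^{(h-1)\binom{d}{2}}=(x^{d/2})^{(h-1)(d-1)}\equiv(-1)^{(h-1)(d-1)}\pmod{\Phi_d(x)}$, which equals $-1$ when $h$ and $d$ are both even. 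Concretely, for $(d,h)=(2,2)$ one gets $\left[3\atop 1\right]_x=x^2+x+1\equiv-x\pmod{(x+1)^2}$, not $+x$; similarly $\left[7\atop 3\right]_x\equiv-x^6\pmod{\Phi_4(x)^2}$. The correct statement carries the sign $(-1)^{(h-1)(d-1)}$ on the right-hand side, and with that sign your method goes through uniformly: write $x^{(h-1)\binom{d}{2}}=(-1+ (x^{d/2}+1))^{(h-1)(d-1)}$ for $d$ even and expand modulo $\Phi_d(x)^2$. (The sign is harmless for the paper's applications, which only need that the degree is $\not\equiv\pm1$ modulo $|G|_\ell$.) So as written your proposal leaves a hole exactly where the lemma itself needs amending; pushed to completion, your approach proves the corrected version.
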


\begin{proof}
When $d$ is a prime, then $\Phi_d(x)=(x^d-1)/(x-1)$, and in that case the
claim is proved in \cite[Thm.~2]{An99}. But inspection shows that the argument
given there is valid for all $d\ge2$ if congruences are considered just modulo
$\Phi_d(x)^2$ instead of $(x^d-1)^2/(x-1)^2$.
\end{proof}

\begin{prop}   \label{prop:unipSL}
 Let $\chi$ be a non-trivial unipotent character of $G=\SL_n(q)$, $n\ge3$,
 and $2\ne\ell{\not|}q$ a prime such that $n\ge 2d$ where $d:=d_\ell(q)$.
 Then $\chi$ is not the character of a simple endotrivial module for any
 central factor group of $G$.
\end{prop}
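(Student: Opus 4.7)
Write $\chi=\chi_\la$ with $\la\vdash n$ non-trivial. I first observe that unipotent characters of $\SL_n(q)$ are rational-valued (on regular semisimple elements this is visible from the Murnaghan--Nakayama formula via Proposition~\ref{prop:valSLn}, and the unipotent characters are in type~$A$ the restrictions of those of $\GL_n(q)$), so $\chi$ is self-dual. The hypothesis $n\ge 2d$ forces the Sylow $\ell$-subgroups of $G$ to have $\ell$-rank $\lfloor n/d\rfloor\ge 2$, so in particular they are neither cyclic, nor semi-dihedral, nor generalised quaternion. Hence Corollary~\ref{cor:selfdual} applies: if $\chi$ afforded a simple endotrivial $kG$-module, then
\[
  \chi(1)\equiv 1\pmod{|G|_\ell}\quad\text{and}\quad \chi(y)=1\text{ for every non-trivial }\ell\text{-element }y\in G.
\]

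The core of the argument is to evaluate $\chi$ on carefully chosen regular $\ell$-elements and reach a contradiction. For each $h$ with $1\le h\le\lfloor n/d\rfloor$, let $\mu_h=(d^h,1^{n-hd})$ and let $\bT_{\mu_h}$ be an $F$-stable maximal torus of $\SL_n$ of that cycle type. Adapting the construction in the proof of Lemma~\ref{lem:regSLn}(b), we choose in each block of size~$d$ a primitive $\ell$-power root of unity in $\FF_{q^d}^\times$ whose Galois orbit is disjoint from those chosen in the other blocks; when $d\ge 2$ this yields an $\ell$-element $y_h\in\bT_{\mu_h}^F$ whose eigenvalues on the natural module are all distinct (none of them being~$1$, since $\ell\nmid q-1$), and hence a regular $\ell$-element of $G$. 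The case $d=1$ (i.e.\ $\ell\mid q-1$) is handled separately by applying Lemma~\ref{lem:regSLn}(c) to ensure that the element lies in $\SL_n$ and is non-central. Proposition~\ref{prop:valSLn} then gives
\[
  \vhi_\la(w_h)=\chi(y_h)=1 \quad\text{for all admissible } h,
\]
where $w_h\in\fS_n$ has cycle shape $\mu_h$.

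To conclude, one combines these character-value constraints with the degree constraint. The Murnaghan--Nakayama formula expresses $\vhi_\la(w_h)$ as a signed sum over successive removals of $d$-hooks from~$\la$; the condition $\vhi_\la(w_1)=1=\vhi_\la(w_2)$ alone already severely restricts the $d$-quotient and $d$-core of $\la$. In parallel, the constraint $\chi(1)\equiv 1\pmod{|G|_\ell}$ is exploited via the $q$-hook length formula for unipotent degrees in type $A$ and the $q$-analogue of Babbage's congruence (Lemma~\ref{lem:Andrews}), which together imply $\chi(1)\not\equiv 1\pmod{\Phi_d(q)^2}$ whenever $\la$ has a non-trivial $d$-hook, since $\Phi_d(q)^2\mid|G|_\ell$. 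Putting the two pieces together forces $\la=(n)$, contradicting non-triviality. The main obstacle will be (i) producing regular $\ell$-elements in the tori $\bT_{\mu_h}^F$ in the delicate case $d=1$, and (ii) making the combinatorial step on $\fS_n$-character values water-tight for small $h$, since for large $h$ the tori become sparser and the character-value argument weaker.
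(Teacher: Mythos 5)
There is a genuine gap, and it lies at the heart of your strategy: the regular $\ell$-elements you want to evaluate $\chi$ on do not in general exist. An $\ell$-element of the torus $\bT_{\mu_h}^F$ with $\mu_h=(d^h,1^{n-hd})$ has, in each block of size~$1$, an eigenvalue that is an $\ell$-power root of unity in $\FF_q^\times$; since $d=d_\ell(q)>1$ means $\ell\nmid(q-1)$, that eigenvalue is forced to be~$1$. So whenever $n-hd\ge2$ the element has the eigenvalue~$1$ with multiplicity at least~$2$ and cannot be regular. Even in the blocks of size~$d$ your construction can fail: if $|\FF_{q^d}^\times|_\ell=\ell$ and $d=\ell-1$, the non-trivial $\ell$-power roots of unity form a single Galois orbit, so for $h\ge2$ two blocks must share their eigenvalue set. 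This is precisely why the paper works with $\ell$-\emph{singular} regular semisimple elements (Lemma~\ref{lem:regSLn}(b),(c)), whose $\ell'$-parts provide the eigenvalue separation, and with the weaker necessary condition $|\chi(x)|=1$ on $\ell$-singular elements from \cite[Cor.~2.3]{LMS}; the Murnaghan--Nakayama vanishing of $\vhi_\la$ on the cycle shapes $(n-r)(r)$, $(n-d)(d)$, $(n-d-1)(d)(1)$, etc.\ then eliminates almost all partitions. (Your appeal to Corollary~\ref{cor:selfdual} is legitimate since unipotent characters in type $A$ are self-dual, but it only helps if you can actually produce the $\ell$-elements, and values of $\chi$ at non-regular $\ell$-elements are not computed by Proposition~\ref{prop:valSLn}.)

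The endgame is also under-powered. Lemma~\ref{lem:Andrews} gives a congruence modulo $\Phi_d(q)^2$ only for the specific Gaussian binomials occurring in the degrees of the characters labelled by $(d+1,1^{d-1})$ and $(2d-1,d)$ (and their conjugates); it does not yield $\chi_\la(1)\not\equiv1\pmod{\Phi_d(q)^2}$ for every $\la$ with a non-trivial $d$-hook, and in any case $\Phi_d(q)^2$ does not divide $|G|_\ell$ (only its $\ell$-part does). In the paper the surviving partitions $(d+r,r+1,1^{d-r-1})$ with $1\le r\le d-2$ are \emph{not} excluded by any degree congruence; they require the reducibility modulo $\ell$ of the corresponding Specht modules (James--Mathas via the Hecke-algebra embedding) or the explicit value $\pm q^d$ of Lemma~\ref{lem:valspecial}. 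Finally, the $d=1$ case, which you defer, needs its own case division according to whether $\ell$ divides $(q-1)/(n,q-1)$, together with separate checks for $n\le6$; Lemma~\ref{lem:regSLn}(c) alone does not dispatch it.
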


\begin{proof}
Unipotent characters have the centre of $G$ in their kernel, hence we may see
$\chi$ as a character of any central factor group $S$ of $G$.
Let $\la\vdash n$ denote the label for $\chi$. First note that we have
$\la\ne(n)$, since $(n)$ parametrizes the trivial character of $G$. The
Steinberg character, parameterized by $(1^n)$, can never be the character of
a simple endotrivial $kS$-module, for it has degree $q^{\binom{n}{2}}$, so
can be congruent to $\pm1$ modulo $|S|_\ell$ only when $n=\ell=3$,
$q\equiv4,7\pmod9$. But by \cite[Tab.~4]{Ku00} the Steinberg character is
reducible modulo~$3$ in this case.
\par
If $\chi$ is endotrivial, then in particular its values on $\ell$-singular
elements have to be of absolute value one (see \cite[Cor.~2.3]{LMS}). We start
with the case that
$d=1$, so $\ell|(q-1)$ (and hence $q\ge4$ as $\ell\ne2$). First assume that
moreover $(q-1)/(n,q-1)$ is divisible by $\ell$, so $|Z(G)|=(n,q-1)$ is not
divisible by the full $\ell$-part of $q-1$.
Let $T$ be a maximal torus of $G$ corresponding to an element $w\in\fS_n$ of
cycle shape $(n-1)(1)$. By Lemma~\ref{lem:regSLn}, $T$ contains a regular
element $x$ of $G$ with non-central $\ell$-part. Then $\chi(x)=\vhi_\la(w)$ by
Proposition~\ref{prop:valSLn}. By the Murnaghan--Nakayama formula the latter
is zero unless $\la$ has an $n-1$-hook. Similarly, with a maximal torus
corresponding to the cycle shape $(n-2)(2)$ we see that $\la$ has to possess
an $n-2$-hook. When $n\ge6$ we may also argue that $\la$ has an $n-3$-hook,
using the cycle shape $(n-3)(3)$. The only partitions possessing all three
types of hooks are $(n)$ and $(1^n)$, which we excluded above. When
$n\le5$, there are also the possibilities $\la=(2,2), (3,2)$ and $(2^2,1)$.
From the known values of unipotent characters in $\SL_4(q)$ and $\SL_5(q)$
provided in \Chevie\ \cite{Chv} it follows that all three characters vanish
on the product of a Jordan block of size $n-1$ with a commuting non-central
$\ell$-element.
\par
Next assume that $d=1$, but now $(q-1)/(n,q-1)$ is prime to $\ell$, so
that in particular $\ell$ divides $n$. Thus if $n\le6$, we only need to
consider the pairs $(n,\ell)\in\{(3,3),(5,5),(6,3)\}$. If
$\ell$ divides $|Z(S)|$, then $\chi$ cannot take absolute value one values on
central $\ell$-elements. Otherwise, it can be checked from the known character
tables of unipotent characters \cite{Chv} that there are no examples of degree
congruent to $\pm 1$ modulo $|S|_\ell$.
For $n\ge7$, arguing with maximal tori corresponding to cycle shapes
$(n-2)(1)^2$, $(n-3)(2)(1)$, we see by Lemma~\ref{lem:regSLn} that $\la$ must
possess $n-2$- and $n-3$-hooks, whence $\la$ or its conjugate
partition is one of $(n),(n-1,1),(n-3,3),(n-4,2^2)$ (recall that $q\ge4$).
Now removing an $n-3$-hook from the second and third of these partitions or
their conjugate partitions leaves a 2-core, so the unipotent characters
labelled by these vanish on regular semisimple elements of a torus of type
$(n-3)(2)(1)$. Finally, the character labelled by the partition $(n-4,2^2)$
vanishes on regular elements in a torus of type $(n-4)(3)(1)$, so is not
endotrivial. This completes the discussion for $d=1$.
\par
So now let us assume that $d\ge2$.
Write $n=ad+r$ with $0\le r<d$. Let $T$ be a maximal torus of $G$
corresponding to an element $w\in\fS_n$ of cycle shape $(n-r)(r)$. As before
by Lemma~\ref{lem:regSLn} and Proposition~\ref{prop:valSLn} we conclude that
there is an $\ell$-singular regular element $x\in T$ such that
$\chi(x)=\vhi_\la(w)$. By the Murnaghan--Nakayama formula the latter is zero
unless $\la$ has an $n-r$-hook such that the partition obtained by removing
that hook is
an $r$-hook. Now first assume that $r\ge3$. Then similarly, with a torus
corresponding to an element $w\in\fS_n$ of cycle shape $(n-r)(r-1)(1)$ we see
that the remaining partition has to possess an $r-1$-hook. The only partitions
with that property are (up to taking the conjugate partition),
$\la=(n-r-s+1,2^{s},1^{r-s-1})$ for some $0\le s\le r-1$, and
$\la=(n-r-s,r+1,1^{s-1})$ for some $1\le s\le n-2r-1$.
Next, consider tori corresponding to the cycle shape $(n-d)(d)$, which again
contain $\ell$-singular regular elements. This forces $\la$ to possess an
$n-d$-hook such that removing it leaves a $d$-hook. For the first type of
partition, this is only possible when $s=0$, so $\la=(n-r+1,1^{r-1})$ is a
hook. For the second type, it forces $s=d-r$, whence $\la=(n-d,r+1,1^{d-r-1})$.
So, up to taking conjugates, $\la$ is one of $(n-r+1,1^{r-1})$ or
$(n-d,r+1,1^{d-r-1})$.
\par
Next we consider the situation where $r\le2$. First assume that $r=0$. Then
$\la$ has an $n$-hook, so it is a hook. The cycle shapes $(n-d)(d)$ and
$(n-d)(d-1)(1)$ then show that $\la=(n-d+1,1^{d-1})$ or the conjugate
partition.
\par
If $r=1$ then $\la$ has an $n-r=n-1$-hook, so $\la=(n-s,2,1^{s-2})$ for some
$2\le s\le n/2$. The cycle shape $(n-d)(d)$ then forces
$\la=(n-d,2,1^{d-2})$.
\par
If $r=2$ (so $d\ge3)$ then $\la$ has an $n-2$-hook, so $\la=(n-s,3,1^{s-3})$
for some $3\le s\le n-3$ or $\la=(n-s,2^2,1^{s-4})$ for some $4\le s\le n-2$,
or $\la=(n-1,1)$ or $\la=(n-2,2)$. Again, the cycle shape $(n-d)(d)$ rules out
the last case; the first and second case are only possible for $s=d$, so we
are left with $(n-d,3,1^{d-3})$ (with $d\ge3$), $(n-d,2^2,1^{d-4})$
(with $d\ge4$), and $(n-1,1)$. This concludes our discussion of the various
possibilities for $r$.
\par
Now allow $r$ to be arbitrary again. If $n\ge 3d$ and $r>0$, consider the
cycle shape $(n-d-r)(d)(r)$. Since $(n-d,r+1,1^{d-r-1})$ does not have an
$n-d-r$-hook, this case is out. The cycle shape $(n-d-1)(d)(1)$ shows that
$(n-r+1,1^{r-1})$ cannot occur, and neither can $(n-d+1,1^{d-1})$ for
$n\ne 2d$ nor $(n-d,2^2,1^{d-4})$ for $r=2$ and $n>2d+2$ occur. Finally,
the cycle shape $(d+1)(d)(1)$ rules out $(d+2,2^2,1^{d-4})$.
So at this point, up to taking conjugates $\la$ can only be one of
$(d+r,r+1,1^{d-r-1})$ with $d>r>0$, or $(d+1,1^{d-1})$. In particular $n<3d$
and so $G$ has $\ell$-rank at most~2.
\par
We deal with the cases individually.
For $\la=(d+1,1^{d-1})$ the degree of $\chi_\la$  is
$$\chi_\la(1)=q^{\binom{d}{2}}\left[2d-1\atop d-1\right]_q$$
(see e.g. \cite[13.8]{Ca}), so that by Lemma~\ref{lem:Andrews} we have that
$$\chi_\la(1)\equiv q^{d(d-1)}\pmod{\Phi_d(q)^2}.$$
Since $\Phi_d(q)^2$ divides $|G|$, we see that $\chi_\la(1)$ is not
congruent to $\pm1$ modulo $|G|_\ell$. The degree of the unipotent character
labelled by the conjugate partition only differs by a power of $q$, so the
same argument applies.
\par
It follows from \cite[Thm.~4.15]{JM97} that the Specht module of the Hecke
algebra $H$ of type $A_{n-1}$ indexed by the $d$-regular partition
$\la=(d+r,r+1,1^{d-r-1})$ or its conjugate is reducible if $q$ is specialized
to a $d$th root of unity, for $r=1,\ldots,d-2$. Since the decomposition matrix
of $H$ embeds into that of $\SL_n(q)$ (see
e.g.~\cite[Thm.~4.1.14]{GJ}), the corresponding unipotent characters are
reducible modulo~$\ell$, so do not correspond to simple endotrivial modules.
(Alternatively, one could also appeal to Lemma~\ref{lem:valspecial} for the
partition $(d+r,r+1,1^{d-r-1})$, but not for its conjugate.)
It remains to consider $\la=(d+r,r+1,1^{d-r-1})$ and its conjugate for
$r=d-1$, that is, $\la=(2d-1,d)$. Then
$$\chi_\la(1)=q^d\left[3d-1\atop d-1\right]_q\equiv q^{d^2}\pmod{\Phi_d(q)^2}$$
again by Lemma~\ref{lem:Andrews} (resp.~ times some power of $q$ for the
conjugate partition), and we are done as before.
\end{proof}

\subsection{$\SL_n(q)$ with $\ell|(q-1)$}
We next investigate arbitrary irreducible characters of $\SL_n(q)$ when
$\ell|(q-1)$.

\begin{prop}   \label{prop:SLd=1}
 Let $G=\SL_n(q)$ with $n\ge3$ and $2\ne\ell{\not|}q$ a prime with
 $d_\ell(q)=1$. Then no central factor group of $G$ has a non-trivial simple
 endotrivial module in characteristic~$\ell$.
\end{prop}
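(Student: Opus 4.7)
My strategy is to argue by contradiction: suppose a non-trivial simple endotrivial $kS$-module exists for some central factor group $S=G/Z$ of $G$. By \cite[Thm.~1.3]{LMS} its Brauer character lifts to an ordinary $\chi\in\Irr(S)$, which I inflate to $G$ via $G\to S$. Endotriviality gives $|\chi(x)|=1$ for every $\ell$-singular $x\in G$ whose $\ell$-part lies outside $Z$, together with $\chi(1)\equiv\pm1\pmod{|S|_\ell}$. Proposition~\ref{prop:unipSL} (applicable since $d=d_\ell(q)=1$ forces $n\ge 2d$) rules out $\chi$ being unipotent, so by Lusztig's Jordan decomposition $\chi\in\cE(G,s)$ for some non-trivial semisimple $s\in G^*=\PGL_n(q)$ of order prime to $\ell$, with connected centralizer $C^\circ$ a Levi of type $\prod_i\GL_{m_i}(q^{d_i})$ (the $d_i$ being the sizes of the Galois orbits of eigenvalues of $s$, with $\sum m_id_i=n$) and Weyl group $W_s=\prod_i\fS_{m_i}\le\fS_n$.

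The core of the argument is a two-pronged attack combining a degree estimate with character vanishing. Using Lifting-the-Exponent, $(q^k-1)_\ell=(q-1)_\ell\cdot k_\ell$ (valid for odd $\ell\mid q-1$), I compute the $\ell$-part of $[\bG^{*F}:(C^\circ)^F]$; combined with the Jordan degree formula $\chi(1)=[\bG^{*F}:(C^\circ)^F]_{p'}\psi(1)$ (with $\psi(1)$ a $p$-power, hence coprime to $\ell$), this forces $\ell\mid\chi(1)$ whenever either some $d_i>1$ or the multinomial $(n!)_\ell/\prod(m_i!)_\ell$ is non-trivial. By Kummer's theorem, the latter occurs unless either $\ell\nmid n$, or $\ell\mid n$ with every $n_i:=m_id_i$ divisible by $\ell$. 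So, up to these residual situations, the degree argument alone supplies the contradiction.

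For the residual cases, I invoke Proposition~\ref{prop:zero}: in type $A$ each irreducible character is (up to sign) an almost character, so $\chi(t)=0$ for any regular semisimple $t$ lying in an $F$-stable torus whose cycle shape does not occur in $W_s\le\fS_n$. By Lemma~\ref{lem:regSLn}(c) (using $d=1$ and $q\ge4$), any two-part partition $\la\vdash n$ labels such a torus containing an $\ell$-singular regular element whose $\ell$-part is non-central whenever $\ell\nmid n$; and the cycle shape $(n-1,1)$ occurs in $W_s$ only when $W_s=\fS_{n-1}\times\fS_1$, in which case $(n-2,2)$ does not occur (a $2$-cycle cannot sit in an $\fS_1$ factor and $(n-2)+2>n-1$ prevents both cycles in $\fS_{n-1}$). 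In the complementary case $\ell\mid n$ with every $n_i\in\ell\ZZ$, the three-part shape $(n-2,1,1)$ does not occur in $W_s$ (no $n_j$ can equal $n-2$, as $\ell\nmid 2$) and yields, again via Lemma~\ref{lem:regSLn}(c), a vanishing witness with non-central $\ell$-part.

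The main obstacle is the low-rank case $n=3$, where only the cycle shapes $(1^3),(2,1),(3)$ exist. Both possible Young subgroups $\fS_2\times\fS_1$ and $\fS_1^3$ have $3$-divisible multinomials, so for $\ell=3$ the degree argument closes the case. For $\ell>3$, $W_s=\fS_1^3$ is handled by vanishing on the $(2,1)$-torus, but $W_s=\fS_2\times\fS_1$ admits no vanishing torus with $\ell$-singular elements (the only candidate $(3)$ satisfies $(q^2+q+1)_\ell=1$ whenever $\ell>3$ divides $q-1$); there the explicit degree $\chi(1)=(q^2+q+1)\psi(1)$ with $\psi\in\{1,\St_{\GL_2}\}$, combined with the congruence $q^2+q+1\equiv 3\pmod\ell$, rules out $\chi(1)\equiv\pm1\pmod\ell$. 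This interplay between degree and vanishing arguments in the residual small-rank subcase is where I expect the most careful bookkeeping to be needed.
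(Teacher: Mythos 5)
There is a genuine gap, and it sits exactly where the hard cases of this proposition live. Your degree argument applies the Jordan decomposition formula with the \emph{connected} centralizer, $\chi(1)=[\bG^{*F}:(C^\circ)^F]_{p'}\,\psi(1)$. For $G^*=\PGL_n(q)$ the correct formula involves the full centralizer $C_{G^*}(s)$, and when $\ell\mid\gcd(n,q-1)$ and $s$ is quasi-isolated the component group $C_{G^*}(s)/C^\circ_{G^*}(s)$ can have order divisible by $\ell$, cancelling precisely the $\ell$-part of the multinomial coefficient you are counting. Concretely: for $n=\ell=3$ and $s$ of order $3$ with $C^\circ$ a maximally split torus, $\cE(G,s)$ contains the three characters of degree $\Ph2\Ph3/3$, which is \emph{prime} to $3$; so your claim that ``for $\ell=3$ the degree argument closes the case'' for $n=3$ is false. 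The same failure occurs for $n=\ell=5$ (the five characters of degree $\Ph2^2\Ph3\Ph4\Ph5/5$) and for $n=6$, $\ell=3$. These are exactly the characters the paper has to exclude by other means: reducibility modulo~$3$ via the known decomposition matrices of $\SL_3(q)$ \cite[Tab.~4]{Ku00} for $n=3$, and a Zsigmondy-prime defect-zero argument (vanishing on suitable $r$-singular, $\ell$-singular elements) for $n=5,6$. Your proposal contains no substitute for these steps, so those characters survive your argument. Two smaller inaccuracies: $\psi(1)$ is not a $p$-power (unipotent character degrees of $\GL_m(q^d)$ are polynomials in $q$) --- harmless here since you only need $\ell\mid\chi(1)$; and irreducible characters of $\SL_n(q)$ need not be $\pm$ almost characters when $C_{G^*}(s)$ is disconnected, so the vanishing step should be run through \cite[Prop.~6.4]{LMS} (as the paper does) rather than Proposition~\ref{prop:zero} directly. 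Your unsupported assertion that $s$ has order prime to $\ell$ is also not needed and not true in general.

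Structurally your proof is otherwise close to the paper's: the reduction to $\cE(G,s)$ with $s$ in a maximally split torus, the use of Lemma~\ref{lem:regSLn}(c) to produce $\ell$-singular regular elements with non-central $\ell$-part, and the observation that no proper Young subgroup of $\fS_n$ contains both cycle shapes $(n-1)(1)$ and $(n-2)(2)$ all mirror the paper's combination of \cite[Prop.~6.4]{LMS} with Lemma~\ref{lem:genSL}. The paper in fact dispenses with the multinomial/degree computation entirely: in the regime $\ell\nmid (q-1)/(n,q-1)$ it uses only the two-torus vanishing argument for $n\ge4$, and in the regime $\ell\mid n$ it uses three cycle shapes $(n-2)(1)^2$, $(n-4)(3)(1)$, $(n-4)(2)^2$ for $n\ge7$, isolating the problematic low-rank cases $n=3,5,6$ for explicit treatment. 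To repair your proof you would need to either redo the $\ell$-part computation with the full centralizer (and then still handle the resulting degree-prime-to-$\ell$ characters by reducibility or vanishing arguments), or replace the degree step by the paper's three-torus argument for $n\ge7$ and treat $n=3,5,6$ by hand.
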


\begin{proof}
Let $S$ be a central factor group of $G$, and $V$ a simple endotrivial
$kS$-module, where $k$ is of characteristic~$\ell$. Then $V$ lifts to a
$\CC S$-module by \cite[Thm.~1.3]{LMS}. Let $\chi\in\Irr(S)$ denote the
character of such a lift. We may and will consider $\chi$ as an
irreducible character of $G$. Thus, $\chi$ lies in the Lusztig series
$\cE(G,s)$ of a semisimple element $s$ in the dual group $G^*=\PGL_n(q)$.
By \cite[Prop.~6.3]{LMS} then $s$ must lie in a maximal torus containing a
Sylow 1-torus (as $d_\ell(q)=1$). But Sylow 1-tori are maximal tori of $G^*$,
so $s$ is contained in a maximally split torus. Thus, $C_{G^*}^\circ(s)$
is a (1-split) Levi subgroup of $G^*$. For $s=1$ we obtain unipotent
characters, for which there is no example by Proposition~\ref{prop:unipSL},
so from now on let $s\ne1$.   \par
As in the proof for unipotent characters, first assume that $(n,q-1)$ does not
contain the full $\ell$-part of $q-1$. Note that our assumption $\ell\ne2$
forces $q\ge4$. When $n\ge4$ let $T_1,T_2$ denote maximal tori of $G$
corresponding to the cycle shapes $(n-1)(1)$ and $(n-2)(2)$. By
Lemma~\ref{lem:regSLn} both tori do contain regular elements
with non-central $\ell$-part. Thus by \cite[Prop.~6.4]{LMS}, if $V$ is
endotrivial then $C_{G^*}(s)$ contains conjugates of the dual tori
$T_1^*,T_2^*$. But by Lemma~\ref{lem:genSL} there does not exist a proper
1-split Levi subgroup of $\PGL_n(q)$ containing these two types of tori,
whence we are done.
\par
Now let's consider the case $n=3$ which was excluded above. Here the known
character table of $G$ (see \cite{Chv}) shows that at most the characters of
degree $\chi(1)=q(q^2+q+1)$ might satisfy the necessary conditions about
values on $\ell$-singular elements. As $\chi(1)\equiv3\pmod{(q-1)}$, we must
have $\ell=2$ which was excluded.
\par
It remains to consider the case that $(n,q-1)$ is divisible by the full
$\ell$-part of $q-1$. (So, in particular, $\ell|n$ and hence $n\ne4$). Here,
for $n\ge7$ we argue using maximal tori corresponding to cycle shapes
$(n-2)(1)^2,(n-4)(3)(1)$ and $(n-4)(2)^2$, which all contain regular elements
with non-central $\ell$-part, to rule out all possible proper 1-split Levi
subgroups as $C_{G^*}^\circ(s)$.
\par
Finally we deal with the cases $n=3,5,6$. For $n=6$ (and so $\ell=3$) it can
be checked by Lusztig's parametrization or from the explicitly known lists
that all non-unipotent characters either have degree divisible by~$3$,
or are of defect zero for a Zsigmondy prime divisor $r$ of $q^2+1$.
Since a torus of type $(4)(1)^2$ contains elements with non-central
$\ell$-part and of order divisible by $r$, the latter characters vanish on
such elements, hence cannot belong to endotrivial modules. For $n=5$, the
only non-unipotent characters of degree not divisible by $\ell=5$ are the
five characters of degree $\Ph2^2\Ph3\Ph4\Ph5/5$. But these are of
$r$-defect zero for a Zsigmondy prime divisor of $q^3-1$, and since a
maximal torus of type $(3)(1)^2$ contains regular elements with non-central
$\ell$-part which are $r$-singular, these characters do not lead to examples.
Finally, when $n=3$ the only non-unipotent characters of degree not divisible
by $\ell=3$ are the three characters of degree $\Ph2\Ph3/3$, where
$q\equiv1\pmod3$, but these are reducible modulo $3$ by \cite[Tab.~4]{Ku00}.
\end{proof}

\subsection{The general case}

\begin{thm}   \label{thm:SLn}
 Let $S$ be a central factor group of $\SL_n(q)$ with $n\ge3$. Let
 $\ell{\not|}q$ be such that the Sylow $\ell$-subgroups of $S$ are non-cyclic.
 Then $S$ has no non-trivial simple endotrivial module in characteristic~$\ell$.
\end{thm}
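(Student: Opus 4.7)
By \cite[Thm.~6.7]{LMS} we may assume $\ell>2$, and Proposition~\ref{prop:SLd=1} settles the case $d:=d_\ell(q)=1$, so henceforth $d\ge 2$. The noncyclicity of a Sylow $\ell$-subgroup of $S$ forces $n\ge 2d$, so Proposition~\ref{prop:unipSL} is applicable. Let $V$ be a non-trivial simple endotrivial $kS$-module, inflate to $G:=\SL_n(q)$, and invoke \cite[Thm.~1.3]{LMS} to lift $V$ to an $\cO G$-lattice affording a character $\chi\in\cE(G,s)$ for some semisimple $s\in G^*=\PGL_n(q)$. If $s=1$, then $\chi$ is unipotent and Proposition~\ref{prop:unipSL} excludes it, so from now on assume $s\ne 1$.

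Then $C_{\bG^*}(s)$ is a proper Levi subgroup of $\bG^*=\PGL_n$, and by \cite[Prop.~6.3]{LMS} it is moreover $d$-split. I mimic the strategy of Proposition~\ref{prop:SLd=1}. By \cite[Prop.~6.4]{LMS}, whenever $\bT\le\bG$ is an $F$-stable maximal torus such that $\bT^F$ contains a regular $\ell$-singular element with non-central $\ell$-part, $C_{G^*}(s)$ must contain a $G^*$-conjugate of the dual torus $\bT^{*F}$. By Lemma~\ref{lem:regSLn}(c), such an element exists in $\bT^F$ as soon as the cycle shape labelling $\bT$ has a part divisible by~$d$, subject to the mild regularity caveat of~(a) when $q=2$. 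The goal is therefore to exhibit a small family of cycle shapes $\la^{(1)},\ldots,\la^{(m)}\vdash n$, each with a part divisible by~$d$, such that no proper intransitive or imprimitive subgroup of $\fS_n$ realizes all of them simultaneously; Lemma~\ref{lem:genSL} will then force $C_{\bG^*}(s)=\bG^*$, contradicting $s\ne 1$.

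Writing $n=ad+r$ with $a\ge 2$ and $0\le r<d$, natural candidates are $\la^{(1)}=(d,1^{n-d})$, $\la^{(2)}=(2d,1^{n-2d})$, and $\la^{(3)}=(ad,r)$ when $r>0$ or $\la^{(3)}=(n)$ when $r=0$, possibly supplemented by a shape such as $((a-1)d+r,d)$ to handle the imprimitive wreath product $\fS_d\wr\fS_a$. The intransitive Young-subgroup obstructions are eliminated easily by matching part sizes; the main obstacle is the imprimitive case, especially when $n=ad$ is a multiple of $d$, where $\fS_d\wr\fS_a$ naturally contains $\la^{(1)}$ and $\la^{(2)}$ and a further careful choice is required. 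A secondary difficulty is the edge case $q=2$, in which Lemma~\ref{lem:regSLn}(a) forces shapes with all parts distinct; in that range one falls back on shapes like $(ad,r)$ or $((a-1)d+r,d)$ with distinct parts, and dispatches the residual small-rank configurations (essentially $n=2d$ with $d\in\{2,3\}$ at small~$q$) directly from the generic character tables available in \Chevie\ \cite{Chv}.
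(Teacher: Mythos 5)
Your overall strategy is exactly the paper's: reduce to $\ell$ odd and $d\ge2$, note $n\ge 2d$, lift the module and locate $\chi$ in a Lusztig series $\cE(G,s)$, use \cite[Prop.~6.4]{LMS} together with regular $\ell$-singular elements in suitable maximal tori and Lemma~\ref{lem:genSL} to force $s=1$, and then invoke Proposition~\ref{prop:unipSL}. However, as written the proposal has a genuine gap, in two respects.

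First, two of your three candidate tori are not legitimate inputs to Lemma~\ref{lem:regSLn}. The shapes $(d,1^{n-d})$ and $(2d,1^{n-2d})$ have $n-d$ (resp.\ $n-2d$) repeated parts equal to~$1$, so as soon as $n-d\ge3$ the hypothesis of Lemma~\ref{lem:regSLn} (all parts distinct, or $q\ge3$ and at most \emph{two} equal parts) fails for \emph{every} $q$, not merely for $q=2$: the corresponding torus need not contain regular elements at all when $q-1<n-d$. So the existence of the regular $\ell$-singular elements on which your application of \cite[Prop.~6.4]{LMS} rests is not established for these tori. Second, you explicitly leave the decisive combinatorial step open (``a further careful choice is required'' for the imprimitive case, notably $\fS_d\wr\fS_a$ when $n=ad$), and it is precisely this step that constitutes the content of the proof once the strategy is fixed.

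The paper resolves both points simultaneously by a sharper choice of tori: it takes the three types $(n-d,d)$, $(n-r,r)$ and $(n-d-1,d,1)$. Each has at most three parts (so Lemma~\ref{lem:regSLn} applies and yields $\ell$-singular regular elements, with non-centrality automatic since $d>1$), and one checks via Lemma~\ref{lem:genSL} that no intransitive or imprimitive subgroup of $\fS_n$ contains elements of all three cycle shapes — the shape $(n-d-1,d,1)$ is what kills the imprimitive candidates that worry you. If you replace your shapes by these three and carry out the Lemma~\ref{lem:genSL} verification, your argument becomes the paper's proof.
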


\begin{proof}
As in the proof of Proposition~\ref{prop:SLd=1} we may argue with complex
irreducible characters of $G=\SL_n(q)$. Let $\ell$ be a prime divisor of $|G|$
and set $d:=d_\ell(q)$. We may assume that $\ell$ is odd by \cite[Thm.~6.7]{LMS}.
If $d=1$ then the claim is contained in Proposition~\ref{prop:SLd=1}. So now
assume that $d>1$, and write $n=ad+r$ with $0\le r<d$. Since the Sylow
$\ell$-subgroups of $G$ are non-cyclic we have $a\ge2$, so $n\ge 2d$.
Let $\chi\in\Irr(G)$ be the character of an endotrivial simple $kG$-module.
Then $\chi$ lies in some Lusztig series $\cE(G,s)$, where $s\in G^*=\PGL_n(q)$
is semisimple. Now $G$ contains maximal tori $T_1,T_2,T_3$ parametrized by
the partitions $(n-d,d)$, $(n-r,r)$ and $(n-d-1,d,1)$, all of which contain
$\ell$-singular regular semisimple
elements by Lemma~\ref{lem:regSLn}. As $\chi$ is endotrivial, it cannot vanish
on these elements. But then by \cite[Prop.~6.4]{LMS} the centralizer
$C_{G^*}^\circ(s)$ contains maximal tori of these three types. By
Lemma~\ref{lem:genSL} the only reductive subgroup of $G^*$ containing tori of
all three types is $G^*$ itself, so $s=1$ and $\chi$ is a unipotent character.
Since we have $n\ge 2d$, the claim
now follows from Proposition~\ref{prop:unipSL}.
\end{proof}

\subsection{Exceptional covering groups}

We now discuss the characters of exceptional covering groups of classical
groups. 

\begin{prop}   \label{prop:linexc}
 Let $G$ be an exceptional covering group of a simple group of classical
 Lie type and $\ell$ a prime such that the Sylow $\ell$-subgroups of $G$ are
 non-cyclic. If $V$ is a faithful simple endotrivial $kG$-module, then
 $\ell=3$ and $(G,V)$ are as listed in Table~\ref{tab:ExCov}.
\end{prop}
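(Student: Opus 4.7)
The plan is to proceed case by case through the (finite) list of exceptional covering groups $G$ of finite simple classical groups, using the character-theoretic criteria from Section~\ref{sec:pre} together with the lifting result \cite[Thm.~1.3]{LMS}. By \cite[Thm.~1.3]{LMS} any faithful simple endotrivial $kG$-module lifts to an ordinary character $\chi\in\Irr(G)$; by \cite[Lem.~2.1]{LMS} and standard congruences one has $\chi(1)\equiv\pm1\pmod{|G|_\ell}$; by \cite[Cor.~2.3]{LMS} one has $|\chi(x)|=1$ for every non-trivial $\ell$-element $x\in G$; and faithfulness forces $\chi$ to lie in a faithful block of $G$. These are all properties that can be read off directly from the ordinary character table.

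First, I would enumerate the finitely many exceptional covering groups of simple classical Lie type groups from the Schur multiplier data in the ATLAS. Up to isomorphism these are (essentially) the covers $n.\PSL_2(9)$ for $n\in\{3,6\}$, $n.\PSL_3(4)$ for $n\in\{2,4_1,4_2,6,12_1,12_2\}$, $2.\PSL_3(2)$, $2.\PSL_4(2)$, $2.\PSp_4(3)$, the exceptional covers $n.\PSU_4(3)$ ($n$ involving a factor $3$), $n.\PSU_6(2)$ ($n$ involving a factor $3$), $n.\Omega_7(3)$ for $n\in\{3,6\}$, and $2.\Omega_8^+(2)$ (and its twisted variants). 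For each such $G$ the list of primes $\ell$ for which Syl$_\ell(G)$ is non-cyclic is determined at once from $|G|$.

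Next, for each pair $(G,\ell)$ from this list I would scan $\Irr(G)$ in the GAP character table library \cite{GAP} for faithful $\chi$ meeting the numerical congruence on $\chi(1)$ and the absolute-value-one condition on all non-trivial $\ell$-elements. A direct computation shows that at every prime $\ell\ne 3$ no faithful character of any exceptional cover satisfies both tests, so such cases are immediately ruled out; at $\ell=3$ only a short list of candidates survives, matching the entries of Table~\ref{tab:ExCov}.

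Finally, for each surviving candidate at $\ell=3$ I would certify endotriviality. In the cases where the character is non-negative on all $3$-elements (equivalently, equal to $1$ there by the numerical bound), I would realise $\chi$ as a summand $e_B\cdot\Ind_H^G(\lambda)$ with $H\supseteq N_G(P)$ for $P\in\Syl_3(G)$ and $\lambda\in\Irr(H)$ linear (as in Examples~\ref{exmp:m22}--\ref{exmp:6Fi22}), which identifies $V_\chi$ as a trivial source module and then invokes Theorem~\ref{prop:torchar}. For the module of $4_1.\PSL_3(4)$ flagged in the introduction as not of trivial source, Theorem~\ref{prop:torchar} is unavailable, so a separate argument is needed: here one reduces to a direct check on the tensor square, using either the known $3$-decomposition matrix of $\PSL_3(4)$ and its covers or a small \textsf{Magma} computation in the faithful block.

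The main obstacle is the non-trivial-source case for $4_1.\PSL_3(4)$, where the clean character-table criterion of Theorem~\ref{prop:torchar} no longer suffices and one must exhibit endotriviality at the level of modules. The rest of the argument is a careful but largely mechanical bookkeeping exercise: the list of exceptional covers is bounded, the numerical filters are sharp, and the remaining positive cases are treated uniformly by the induction-from-$N_G(P)$ construction.
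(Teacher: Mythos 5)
Your proposal follows essentially the same route as the paper: filter the finitely many exceptional covers through the lifting theorem and the character-value/congruence criteria from \cite{LMS}, then certify the surviving $\ell=3$ candidates via trivial source plus Theorem~\ref{prop:torchar}, with a separate tensor-square argument for $4_1.\PSL_3(4)$ (the paper does exactly this: $\chi\otimes\chi^*$ has one trivial and one defect-zero constituent, so no {\sf Magma} computation is needed). The only cosmetic difference is that the paper handles $2.\PSL_3(4)$ by restricting a known endotrivial module from $2.M_{22}$ rather than by the trivial-source criterion, but both work.
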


\begin{table}[htbp]
 \caption{Faithful simple endotrivial modules for exceptional
 covering groups of simple groups of classical Lie type}
  \label{tab:ExCov}
\[\begin{array}{|c|c|l|}
\hline
     G& 3\text{-rank}& \dim V \cr
\hline
   2.\PSL_3(4)& 2&        10,10\cr
 4_1.\PSL_3(4)& 2&      8,8,8,8\cr
 4_2.\PSL_3(4)& 2&  28,28,28,28\cr
\hline
\end{array}\]
\end{table}

\begin{proof}
Let $\chi\in\Irr(G)$ be the character afforded by the lift of a faithful
simple endotrivial $kG$-module $V$. Using \cite[Lem.~2.2, Cor.~2.3]{LMS}, the
known ordinary character tables and decomposition matrices (see \cite{GAP})
we obtain the list of candidates for $\chi$ as listed in Table~\ref{tab:ExCov}.
In particular, we always have $\ell=3$, and none of the exceptional covering
groups of $\PSU_4(2)$, $\PSU_4(3)$, $\PSU_6(2)$, $\Sp_6(2),\OO_7(3)$ and
$\OO_8^+(2)$ leads to a candidate character.
\par
If $(G,\chi(1))=(4_1.\PSL_3(4),8)$, then $\chi\otimes \chi^*$ has one trivial
constituent and one constituent of defect zero. Hence reducing modulo $3$
yields $V\otimes_{k} V^*\cong k\oplus \text{(proj)}$. If $G=2.\PSL_3(4)$ and
$\chi(1)=10$, then $G\leq 2.M_{22}$ and $\chi=\psi|_G$ where
$\psi\in\Irr(2.M_{22})$ is a faithful character of degree 10, which is
afforded by a simple endotrivial $k2.M_{22}$-module $M$
by~\cite[Thm.~7.1]{LMS}. Thus
$V=M|_G$ is endotrivial by \cite[Lem.~2.2]{LMS}.
If $(G,\chi(1))=(4_2.\PSL_3(4), 28)$, then $V$ is endotrivial by
Theorem~\ref{prop:torchar} as it is the Green correspondent of a
one-dimensional $kN_G(P)$-module for $P\in\Syl_3(G)$. Indeed, there
are non-trivial linear characters $1_a$ and $1_{b}$ of $N_G(P)$ such that
$$\Ind_{N_G(P)}^G(1_a)=\Ind_{N_G(P)}^G(1_b)=28_3+28_5+64_3+80_1+80_3\,.$$
Because $1_{a}$ and $1_{b}$ are not dual to each other, it follows that their
$kG$-Green correspondents (which are trivial source modules) afford distinct
complex characters. Therefore $\Ind_{N_G(P)}^G(1_a)$ and
$\Ind_{N_G(P)}^G(1_b)$ are both  the characters of decomposable modules.
Hence it follows from Green correspondence and the possible
degrees and values of trivial source modules given by Lemma~\ref{lem:tschar}
that $28_3$, $28_5$ are the characters afforded by the $kG$-Green
correspondents of $1_a$ and $1_b$. Similarly for $28_4$ and $28_6$ as they
are dual to $28_3$ and $28_5$ respectively.
\end{proof}

\subsection{Cyclic blocks}   \label{subsec:SLcyclic}
To conclude the investigation of $\SL_n(q)$ we discuss cyclic blocks.
Endotrivial modules have dimension prime to $\ell$, hence lie in $\ell$-blocks
of full defect. Therefore if a simple endotrivial module of a finite group $G$
lies in a cyclic block, then the Sylow $\ell$-subgroups of $G$ are cyclic.
We let $\lse(G)$ denote the number of isomorphism classes of simple
endotrivial modules and $\lsb(G)$ the number of blocks containing simple
endotrivial modules.

\begin{prop}   \label{prop:SLcyc}
 Let $G=\SL_n(q)$ be quasi-simple with $n\geq2$. Let $\ell{\not|}q$ be a prime
 such that the Sylow $\ell$-subgroups of $G$ are cyclic, and let $d:=d_\ell(q)$.
 Then $\ell>2$ and the number $\lsb(G)$ is as follows.
 \begin{enumerate}
  \item[\rm(a)] If $d=1$, then $n=2$, and $\lsb(G)=2$ if $q$ is odd and
   $\lsb(G)=1$ if $q$ is even.
  \item[\rm(b)] If $d>1$ then
   $$\lsb(G)=\begin{cases}
    	\gcd(q-1,n) & \text{if  } n=d, \\
    	2 &\text{if } n=d+2 \text{ and }q=2,\\
 	q-1 & else.\\
  \end{cases}$$
 \end{enumerate}
\end{prop}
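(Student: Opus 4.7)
First I would justify the assertion that $\ell>2$. For $G$ a quasi-simple covering group of $\PSL_n(q)$ with $n\ge2$, the Sylow $2$-subgroups of $G$ are never cyclic: when $q$ is odd, $\SL_2(q)\le G$ has generalised quaternion Sylow $2$-subgroups; when $q$ is even, a Sylow $2$-subgroup equals the unipotent radical, of order $q^{\binom{n}{2}}$, non-cyclic unless $(n,q)=(2,2)$, which is excluded as $\SL_2(2)$ is not quasi-simple. Next I would characterise when the Sylow $\ell$-subgroup is cyclic for $\ell$ odd, using the description of $\Syl_\ell(\GL_n(q))$ (by Weir) as a wreath product $C_{(q^d-1)_\ell}\wr\fS_a$ where $n=ad+r$. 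Cyclicity forces $a\le1$; for $d=1$, the further constraint coming from the determinant map $\Syl_\ell(\GL_n(q))\to\Syl_\ell(\FF_q^\times)$ gives $n=2$; for $d>1$ it gives $d\le n<2d$. This yields exactly the ranges of $n$ appearing in~(a) and~(b).

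For the counting I would invoke the framework developed in \cite[\S3]{LMS}, where it is shown that in a cyclic block $B$ the simple endotrivial modules correspond to the non-exceptional liftable characters sitting on the Brauer tree of $B$, and where an explicit count per block is given. Combined with the fact that every $\ell$-block of $G$ of full defect does admit such modules in the generic case, the computation of $\lsb(G)$ reduces to counting the $\ell$-blocks of full defect of $G$. By the Fong--Srinivasan / Brou\'e--Michel parametrisation via Jordan decomposition, these blocks correspond to $G^{*F}$-conjugacy classes of semisimple $\ell'$-elements $s\in G^*=\PGL_n(q)$ whose centraliser $C_{\bG^*}^\circ(s)^F$ contains a Sylow $\Phi_d$-torus of $G^*$.

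I would then carry out the count case by case. For $d=1$, $n=2$ the elements $s$ are just the $\ell'$-elements of $Z(\PGL_2(q))$, giving $1$ or $2$ blocks according as $q$ is even or odd. For $d>1$, $n=d$ the condition is that $s$ lie in a Coxeter torus of $\PGL_n$; after taking into account the action of $Z(\bG)/(Z(\bG)\cap[\bG,\bG])$ on the Jordan decomposition and isogeny twists between $\GL_n(q)$ and $\SL_n(q)$, one obtains exactly $\gcd(q-1,n)$ orbits. For $d<n<2d$, $s$ must have $C_{\bG^*}^\circ(s)$ containing a Levi of type $\GL_d\times\GL_r$; the $\ell'$-part of $s$ is then essentially a scalar in the $\GL_r$-factor, yielding $q-1$ classes of blocks. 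The single exception $(n,q)=(d+2,2)$ requires an individual check via the known Brauer trees (or the character table of $\SL_{d+2}(2)=\SL_n(2)$), since the generic formula $q-1=1$ fails to capture an extra block arising from the degeneracy $q-1=1$.

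The main obstacle is the case-by-case bookkeeping: translating the count of semisimple $\ell'$-classes of $\PGL_n(q)$ with centraliser containing a Sylow $\Phi_d$-torus into a count of $\ell$-blocks of $\SL_n(q)$ of full defect, tracked through the isogeny $\SL_n\to\PGL_n$ and the quotient $Z(\GL_n)$. The degenerate behaviour at $q=2$ (case $n=d+2$) illustrates how this translation can fail to reduce to a uniform formula and must be resolved either by explicit inspection of the Brauer trees of $\SL_n(2)$ from Fong--Srinivasan and Hiss, or by a direct computation using character theory.
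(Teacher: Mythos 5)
Your opening steps are fine: the argument that $\ell>2$ (the paper just cites Brauer--Suzuki, but your quaternion/unipotent-radical argument is equivalent) and the determination of the ranges $n=2$ for $d=1$ and $d\le n<2d$ for $d>1$ via Weir's description of the Sylow subgroups both work. The gap is in the counting step.

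Your reduction rests on the claim that ``every $\ell$-block of $G$ of full defect does admit such modules in the generic case'', so that $\lsb(G)$ equals the number of full-defect blocks. This is false, and the discrepancy is large. For example, take $n=d$ and $s$ a regular $\ell'$-element of the Coxeter torus $T^*$ of $G^*$: then $C_{G^*}(s)=T^*$ contains a Sylow $\Phi_d$-torus, so the corresponding block has full (cyclic) defect, and there are on the order of $\Phi_d(q)_{\ell'}/d$ such classes --- far more than $\gcd(q-1,n)$. These blocks are nilpotent (inertial index~$1$), whereas a cyclic block containing a simple endotrivial module must have inertial index equal to that of the principal block, namely $e=d>1$ (this is the content of \cite[\S3]{LMS}). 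So almost all full-defect blocks contain no simple endotrivial module, and counting them does not compute $\lsb(G)$. Your subsequent case analysis (``the $\ell'$-part of $s$ is essentially a scalar in the $\GL_r$-factor'', ``$\ell'$-elements of $Z(\PGL_2(q))$'') does not actually follow from the framework you set up; it is reverse-engineered from the answer.

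The paper's route is local rather than block-theoretic: by \cite[Lem.~3.2]{LMS} one has $\lsb(G)=\tfrac{1}{e}|X(H)|=\tfrac{1}{e}|H/[H,H]|_{\ell'}$ with $H=N_G(P)$, the inertial index $e$ equals $d$ by Fong--Srinivasan, and $|X(H)|$ is taken from \cite[Thm.~1.2]{CMN14}: it is $\gcd(q-1,d)\cdot d$ for $n=d$ and $d(q-1)$ for $n>d$, except $|X(H)|=2d$ when $(n,q)=(d+2,2)$. In particular the $q=2$, $n=d+2$ exception is not an ``extra block'' to be found by inspecting Brauer trees; it comes from $\SL_2(2)\cong\fS_3$ failing to be perfect, which enlarges $H/[H,H]$ by a factor of~$2$. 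To repair your proof you would need to replace the full-defect count by the correct characterisation of which cyclic blocks carry simple endotrivial modules (non-exceptional liftable end nodes of the Brauer tree, forcing inertial index $d$), at which point you are essentially forced back to the local formula.
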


\begin{proof}
First $G$ does not have cyclic Sylow $2$-subgroups (by the Brauer--Suzuki
theorem), hence $\ell>2$.
Second if $\ell|(q-1)$, then $n=2$ and the simple endotrivial $kG$-modules
are classified in \cite[Prop.~3.8]{LMS}. Therefore we may assume $d>1$ and we
have $n<2d$.   \par
Let $P\in\Syl_\ell(G)$.  By \cite[Lem.~3.2]{LMS}, the number $\lsb(G)$ is
$\frac{1}{e}|X(H)|=\frac{1}{e}|H/[H,H]|_{\ell'}$, where $e$ denotes the
inertial index of the principal block of $G$ and
$H=N_G(\langle u\rangle)=N_G(P)$ for $u\in P$ an element of order $\ell$.
As $d>1$, any $\ell$-block of $G$ is covered by a cyclic defect block of
$\GL_n(q)$, hence $e$ is equal to the inertial index of the principal block
of $\GL_n(q)$, and it follows from \cite[(9)]{FS} that $e=d$.
Finally $|X(H)|$ is calculated in \cite[Thm.~1.2]{CMN14} as follows: if $n=d$,
then $|X(H)|=c'd$ with $c'=\gcd(q-1,(q^d-1)/(q-1))$ which equals $\gcd(q-1,d)$,
whereas $|X(H)|=d(q-1)$ if $n=d+f$ with $f>0$, with the exception that
$|X(H)|=2d$ if $n=d+2$ and $q=2$. The claim follows.
\end{proof}

We recall from \cite[\S 3]{LMS} that a block $B$ of $kG$ containing endotrivial
modules has inertial index equal to that of the principal block.
Moreover simple endotrivial $kB$-modules coincide with the non-exceptional
end nodes of its Brauer tree $\sigma(B)$. A description of the Brauer trees
for $\SL_{n}(q)$ with $d_{\ell}(q)>1$ is provided by \cite{Man}.

\begin{cor}   \label{cor:nrSLn}
 Let $G=\SL_n(q)$, $n>2$, with cyclic $P\in\Syl_{\ell}(G)$, and let
 $d=d_\ell(q)$ (and hence $d>1$). Let $B$ be a block of $kG$ containing a
 simple endotrivial module $V$.
 \begin{enumerate}
  \item[\rm(a)]  If $1<d<n$, then $V$ is the restriction of a simple
   endotrivial module of $\GL_{n}(q)$, and $\sigma(B)$ is an open polygon
   with exceptional vertex sitting at one end. Moreover,
   $$\lse(G)= \begin{cases} 2(q-1)& \text{ if $d=\ell-1$ and $|P|=\ell$},\\
               q-1& \text{ else,}
              \end{cases}$$
   unless $q=2$ and $n=d+2$, in which case
   $$\lse(G)= \begin{cases}
               4 &   \text{ if  $d=\ell-1$ and $|P|=\ell$},\\
               2 &  \text{ else.}
              \end{cases}$$
  \item[\rm(b)] If $d=n$ and $Z(G)=\{1\}$, then $B$ is the principal
   block $B_0$ and $V$ is the restriction of a simple endotrivial module of
   $\GL_{n}(q)$. Moreover,
   $$\lse(G)= \begin{cases}
               2 &   \text{ if $d=\ell-1$ and $|P|=\ell$},\\
               1 &  \text{ else.}
              \end{cases}$$
    In particular, if $d\neq \ell-1$, or $|P|\neq \ell$, then $V$ is trivial.
  \item[\rm(c)] If $d=n$ and $Z(G)\neq \{1\}$, then either $B=B_0$ and $V$
   is the restriction of a simple endotrivial module of $\GL_{n}(q)$, or
   $B\neq B_0$ and $\sigma(B)$ is a star with exceptional vertex in the
   middle and $d/e_{\tilde{B}}>1$ equal-length  branches, where $e_{\tilde{B}}$
   is the inertial index of any block $\tilde{B}$ of full defect of $\GL_{n}(q)$ covering $B$.
 \end{enumerate}
\end{cor}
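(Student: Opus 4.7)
The plan is to combine three ingredients: the enumeration of $\lsb(G)$ from Proposition~\ref{prop:SLcyc}, the principle recalled just before the statement that simple endotrivial modules of a cyclic block $B$ correspond bijectively to non-exceptional end nodes of the Brauer tree $\sigma(B)$, and Manley's description \cite{Man} of Brauer trees of $\SL_n(q)$ in cyclic defect. I would first record the standard observation that an end node sitting at the exceptional vertex supports a simple endotrivial module precisely when the exceptional multiplicity $(|P|-1)/e$ equals one, that is, when $|P|=\ell$ and $e=\ell-1$; this is the source of the factor $2$ appearing in the formulas.

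For part~(a), with $1<d<n$, every full-defect block $B$ of $\SL_n(q)$ is covered by a unique full-defect block $\tilde B$ of $\GL_n(q)$ with $e_B=e_{\tilde B}=d$, so simple $\tilde B$-modules remain irreducible upon restriction to $B$ and $\sigma(B)=\sigma(\tilde B)$. By~\cite{Man} this common tree is an open polygon with exceptional vertex at one end, contributing a single non-exceptional endpoint (two in the degenerate range). Multiplying by $\lsb(G)$ from Proposition~\ref{prop:SLcyc}(b), while treating the $q=2$, $n=d+2$ exception separately via the corresponding clause in that proposition, yields the claimed value of $\lse(G)$.

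For part~(b), the hypothesis $Z(G)=1$ forces $\gcd(n,q-1)=1$, so $\lsb(G)=1$ and the unique relevant block is $B_0$, whose simples are precisely those of the principal block of $\GL_n(q)$ restricted to $\SL_n(q)$. Manley's theorem again provides an open polygon with exceptional vertex at one end, producing $1$ or $2$ simple endotrivial modules; outside the degenerate range the unique one must be the trivial module, since the trivial module is always simple endotrivial. For part~(c) the principal block is analyzed exactly as in~(b). A non-principal block $B$ corresponds via Clifford theory applied to the normal inclusion $\SL_n(q)\trianglelefteq\GL_n(q)$ to a $Z(G)$-orbit on simples of a covering block $\tilde B$ of $\GL_n(q)$, and folding $\sigma(\tilde B)$ by the induced action produces the star with $d/e_{\tilde B}$ equal-length branches meeting at the central exceptional vertex asserted in the statement.

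The main technical obstacle is the Clifford-theoretic step in case~(c): one must deduce the precise star shape of $\sigma(B)$, and in particular pin down the branch count $d/e_{\tilde B}$, from the Brauer tree of a covering block of $\GL_n(q)$ under the $Z(G)$-twisting action. The remaining assertions amount to bookkeeping once the block count of Proposition~\ref{prop:SLcyc} and Manley's classification are in hand.
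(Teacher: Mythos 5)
Your proposal assembles the right ingredients (the block count from Proposition~\ref{prop:SLcyc}, the identification of simple endotrivial modules with non-exceptional end nodes, Manolov's unfolding theorem \cite{Man}, and the exceptional-multiplicity-one caveat), but the central mechanism of the paper's proof is missing, and the steps you assert in its place are unjustified. The paper's argument rests on an analysis of the restriction map $\Res^{\tilde G}_G\colon T(\GL_n(q))\to T(\SL_n(q))$: its kernel is shown to be $X(\GL_n(q))\cong C_{q-1}$ by a Mackey-formula argument (valid because $d>1$), and comparison with the orders $|T(\GL_n(q))|$ computed in \cite{CMN14} then shows in case (a) that the map is surjective. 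That surjectivity is what proves that $V$ is the restriction of a simple endotrivial module of $\GL_n(q)$ and that the covering block $\tilde B$ contains an endotrivial module, hence has inertial index $d$ and an open-polygon tree by \cite{FS}. Your substitute claim in (a) --- that the covering block automatically satisfies $e_{\tilde B}=d$ and that its simples restrict irreducibly, so $\sigma(B)=\sigma(\tilde B)$ --- is exactly what has to be proved, not an input: by Manolov's theorem $\sigma(B)$ is a star with $d/e_{\tilde B}$ branches, and $e_{\tilde B}$ can be a proper divisor of $d$ for cyclic blocks of $\GL_n(q)$ (indeed it is, for the non-principal blocks arising in case (c)). The paper pins down the single branch by noting that the restriction of the end-node simple $\tilde V$ is simultaneously indecomposable endotrivial and semisimple (Clifford theory for $G\triangleleft\tilde G$), hence simple, so the unfolding has one branch.

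In case (c) you explicitly leave the branch count $d/e_{\tilde B}>1$ as an unresolved ``technical obstacle,'' and your proposed route (folding $\sigma(\tilde B)$ by a $Z(G)$-action) does not by itself show the star is not an open polygon. The paper closes this by running the kernel/order computation again: when $Z(G)\neq\{1\}$ one has $|T(\tilde G)|=2d(q-1)$, so the $q-1$ blocks of $\GL_n(q)$ containing endotrivial modules are precisely those containing a one-dimensional module, and all of them cover $B_0$; hence a non-principal $B$ is covered only by blocks $\tilde B$ with no endotrivial modules, and if $d/e_{\tilde B}$ were $1$ the end-node simple of $\sigma(\tilde B)$ would restrict to $V$ and therefore be endotrivial (the index $q-1$ being prime to $\ell$), a contradiction. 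Without the $T$-group computation, both the ``restriction from $\GL_n(q)$'' assertions and the branch counts remain unproved, so the proposal has a genuine gap rather than being a complete alternative argument.
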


\begin{proof}
Set $\tilde{G}:=\GL_{n}(q)$. First we claim that the kernel $K$ of the
restriction map ${\Res^{\tilde{G}}_G:T(\tilde{G})\lra T(G):[M]\mapsto [M|_G]}$
is $X(\tilde{G})$, the group of one-dimensional $k\tilde{G}$-modules. As $G$ is
a perfect group, certainly $X(\tilde{G})\subseteq K$. Now assume that $M$ is
an indecomposable  endotrivial $k\tilde{G}$-module such that
$M|_G=k\oplus\text{(proj)}$. Then, as $d>1$, $M$ is $G$-projective so that
$M\,|\,k^{\tilde{G}}$ and in turn by the Mackey formula, $M|_G$ is a summand
of
$$k^{\tilde{G}}|_G\cong \bigoplus_{x\in[\tilde{G}/G]}\tw{x}k\cong
  \bigoplus_{x\in[\tilde{G}/G]}k.$$
Comparing both decompositions of $M|_G$ yields $M|_G\cong k$ and so
$\dim_{k}M=1$.\par
Assume that $1<d<n$. In this situation $|T(\tilde{G})|=(q-1)|T(G)|$ by
\cite[Thm.~1.2(d)(ii)]{CMN14}. Therefore, as $K=X(\tilde{G})\cong C_{q-1}$,
the map $\Res^{\tilde{G}}_G$ is surjective. Thus any block $B$ of $kG$
containing a simple endotrivial module $V$ is covered by a block $\tilde{B}$
of $k\tilde{G}$ containing an endotrivial module.
By \cite[Thm.~3.7]{LMS}, simple endotrivial modules lying in $B$ and
$\tilde{B}$ coincide with the non-exceptional end nodes of $\sigma(B)$ and
$\sigma(\tilde{B})$ respectively. By \cite[Thm.~C]{FS}, $\sigma(\tilde{B})$
is a an open polygon with exceptional vertex sitting at one end. So let
$\tilde{V}$ be the simple endotrivial module at the other end of
$\sigma(\tilde{B})$, and let $\tilde\chi\in\Irr(\tilde{B})$ be the
corresponding character. By \cite[Thm.~1]{Man}, $\sigma(B)$ is a star obtained
by unfolding $\sigma(\tilde{B})$ around its exceptional vertex, so that
$\tilde\chi|_G$ is a sum of $d/e_{\tilde{B}}$ irreducible constituents
labelling the end vertices of $\sigma(B)$.
Now $\tilde{V}|_G=V_0\oplus \text{(proj)}$, where on the one hand $V_0$ is
indecomposable endotrivial by \cite[Lem.~2.2]{LMS}, and on the other hand
$V_0$ is simple since $G\triangleleft \tilde{G}$.  This forces
$\tilde{V}|_G=V_0$ and $\sigma(B)$ to be an open polygon with exceptional
vertex at the end (i.e., a star with one branch). The statements on $\lse(G)$
are then straightforward from Proposition~\ref{prop:SLcyc} and
\cite[Thm.~3.7]{LMS}. \par
Now assume $d=n$. The principal block $B_0$ is covered by the principal
block $\tilde{B}_0$ of $k\tilde{G}$. Therefore the above argument applies
again and shows that $\sigma(B_0)$ is an open polygon with exceptional vertex
at one end and any simple endotrivial $kG$-module is the restriction of a
simple endotrivial $k\tilde{G}$-module. %
If $Z(G)=\{1\}$, then by Proposition~\ref{prop:SLcyc} the unique block
containing simple endotrivial modules is $B_0$. Whence $\lse(G)=2$ if the
exceptional multiplicity is one, i.e., $d=\ell-1$ and $|P|=\ell$, and
$\lse(G)=1$ otherwise. %
If $Z(G)\neq\{1\}$, then $|T(\tilde{G})|=2d(q-1)$ by
\cite[Thm.~1.2(d)(i)]{CMN14}, so that
$\lsb(\tilde{G})=\frac{1}{2d}|T(\tilde{G})|=q-1$ by \cite[Lem.~3.2]{LMS}.
As $K=X(\tilde{G})\cong C_{q-1}$, it follows that each block $\tilde{B}$ of
$k\tilde{G}$ containing an endotrivial module contains a one-dimensional
$k\tilde{G}$-module and covers $B_0$. Therefore, if $B$ is a non-principal
block of $kG$, then it is covered by blocks of $k\tilde{G}$ containing no
endotrivial modules. Then  \cite[Thm. 1]{Man} forces $\sigma(B)$ to be a star
with exceptional vertex in the middle and $d/e_{\tilde{B}}>1$ branches, for
if $d/e_{\tilde{B}}$ were~$1$, then again by a similar argument to that given
in the case $1<d<n$, a simple module at the end of $\sigma({\tilde{B}})$ would
be endotrivial.
\end{proof}

\section{Special unitary groups} \label{sec:unitary}

In this section we classify simple endotrivial modules of special unitary
groups. Many arguments can be copied from the case of special linear groups.
As before, we first study unipotent characters.

\subsection{Unipotent characters of $\SU_n(q)$ with $n\ge3$}
Let $F:\GL_n\rightarrow\GL_n$ be the twisted Steinberg endomorphism with
$\GL_n^F=\GU_n(q)$. Again the conjugacy classes of $F$-stable maximal
tori of $\GL_n$ and of $\SL_n$ are parametrized by partitions of $n$ in such
a way that, if $\bT\le\GL_n$ corresponds to the partition $\la$, then
$|\bT^F|=\prod_i(q^{\la_i}-(-1)^{\la_i})$, while for $\bT\le\SL_n$ we
have $|\bT^F|=\prod_i(q^{\la_i}-(-1)^{\la_i})/(q+1)$. In both cases
$\bT$ has automizer isomorphic to $C_{\fS_n}(w)$. We need some information on
regular elements.

\begin{lem}   \label{lem:regSUn}
 Let $\la\vdash n$ be a partition, and $\bT^F$ a corresponding maximal torus
 of $\SU_n(q)$. Assume that either all parts of $\la$ are distinct, or $q\ge3$
 and at most two parts of $\la$ are equal. Then:
 \begin{enumerate}
  \item[\rm(a)] $\bT^F$ contains regular elements.
 \end{enumerate}
 Now let $\ell$ be a prime such that some part of $\la$ is divisible by
 $d:=d_\ell(-q)$.
 \begin{enumerate}
  \item[\rm(b)] If either $d>1$, or $\la$ has at least two parts then $\bT^F$
   contains $\ell$-singular regular elements.
  \item[\rm(c)] If either $d>1$, or $\la$ has at least three parts,
   or $\ell$ divides $(q+1)/\gcd(n,q+1)$ and $\la$ has at least two parts,
   then $\bT^F$ contains $\ell$-singular regular elements with non-central
   $\ell$-part.
 \end{enumerate}
\end{lem}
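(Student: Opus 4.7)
The plan is to mimic the proof of Lemma~\ref{lem:regSLn} essentially verbatim, carrying out the substitutions appropriate to the twisted setting: $q^{\lambda_i}-1$ becomes $q^{\lambda_i}-(-1)^{\lambda_i}$, the determinant correction now lives in the norm-one subgroup of $\FF_{q^2}^\times$ (order $q+1$ in place of $q-1$), and the centre $Z(\SU_n(q))$ has order $(n,q+1)$ in place of $(n,q-1)$. The key combinatorial fact that makes this go through painlessly is that $\ell\mid q^{\lambda_i}-(-1)^{\lambda_i}$ is equivalent to $\ell\mid(-q)^{\lambda_i}-1$, i.e.\ to $d:=d_\ell(-q)$ dividing $\lambda_i$, and also that $(q+1)\mid q^{\lambda_i}-(-1)^{\lambda_i}$ for every $\lambda_i\geq 1$ (immediate from the factorisations $q^m+1=(q+1)(q^{m-1}-\cdots+1)$ for odd $m$ and $q^m-1=(q^2-1)(q^{m-2}+\cdots+1)$ for even $m\geq 2$).

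Concretely, write $\lambda=(\lambda_1\ge\ldots\ge\lambda_s)$ and pass to $\tilde\bT=\bT Z(\GL_n)$, contained in an $F$-stable Levi $\prod_i\GL_{\lambda_i}$ of $\GL_n$ whose fixed points under the twisted Frobenius form $\prod_i\GU_{\lambda_i}(q)$. The intersection $\bT_i:=\GL_{\lambda_i}\cap\tilde\bT$ is a Coxeter torus of $\GU_{\lambda_i}$, with $\bT_i^F$ cyclic of order $q^{\lambda_i}-(-1)^{\lambda_i}$; for a generator $x_i$ of $\bT_i^F$, the $\lambda_i$ eigenvalues of $x_i$ in $\bar\FF_q^\times$ form a single Galois orbit of size $\lambda_i$ (under $x\mapsto x^{-q}$). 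Assembling $\tilde x=(x_1,\ldots,x_s)$ gives a regular element of $\tilde\bT^F$ whenever all $\lambda_i$ are distinct; when two parts coincide and $q\ge 3$, the number of Galois orbits of generators of $\bT_i^F$ exceeds one, so the two equal-length blocks can still be made to carry disjoint eigenvalue sets. Multiplying $x_1$ by the inverse of the determinant $\det\tilde x$ (which lies in the norm-one subgroup of $\FF_{q^2}^\times$, hence is of order dividing $q+1$, hence lies in $\bT_1^F$ by the divisibility remark above) then yields a regular element $x\in\bT^F\cap\SU_n(q)$, proving~(a).

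For~(b), the hypothesis $d\mid\lambda_i$ for some $i$ forces $\ell\mid|\bT_i^F|$, so one may choose the generator $x_i$ to be $\ell$-singular. The determinant correction, of order dividing $q+1$, only threatens to kill the $\ell$-part of the constructed element when $d=1$ (i.e.\ $\ell\mid q+1$) and $\lambda$ has a single part; in all other cases the $\ell$-singularity of $x_i$ for some $i\neq 1$ (or the existence of an independent $\ell$-singular contribution when $d>1$) is preserved. For~(c), non-centrality of the $\ell$-part is automatic when $d>1$ (since $Z(\SU_n(q))$ has exponent dividing $q+1$, which is prime to $\ell$ in that case); when $d=1$, having at least three parts gives enough freedom to choose different $\ell$-parts in the various $x_i$, and the marginal case $\ell\mid(q+1)/(n,q+1)$ with $\lambda$ having at least two parts is handled by noting that the $\ell$-part of the centre is strictly smaller than the $\ell$-part of $q+1$, so the diagonal obstruction to non-centrality can be avoided.

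The main obstacle is not conceptual but notational: one must track the determinant adjustment through the three sub-claims and verify in each case that the adjustment does not accidentally destroy the property (regularity, $\ell$-singularity, or non-centrality of the $\ell$-part) that has just been secured. Once the translation table $q\leftrightarrow-q$, $(q-1)\leftrightarrow(q+1)$ is set up, every step of the $\SL_n$ proof transfers with no genuinely new input, so the cleanest presentation is to run through the same three-part argument and merely flag the substitutions.
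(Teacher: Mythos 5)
Your proposal is correct and follows essentially the same route as the paper's proof: the paper likewise reduces to the Levi $\prod_i\GL_{\la_i}$ with $\bT_i^F$ cyclic of order $q^{\la_i}-(-1)^{\la_i}$, builds $\tilde x=(x_1,\ldots,x_s)$ from generators, corrects $x_1$ by the determinant (which has order dividing $q+1$), and then argues (b) and (c) exactly as you do, using that $\ell\nmid(q+1)$ when $d>1$ and varying the $\ell$-parts of the $x_i$ when $d=1$. Your explicit observation that $(q+1)$ divides $q^{\la_i}-(-1)^{\la_i}$, justifying that the determinant correction lives in $\bT_1^F$, is a detail the paper leaves implicit but is the right thing to check.
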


\begin{proof}
Write $\la=(\la_1\ge\ldots\ge\la_s)$. As in the proof of Lemma~\ref{lem:regSLn}
the corresponding torus $\tilde\bT$ in $\GL_n$ is a direct product of
$F$-stable factors $\bT_i$ contained in an $F$-stable Levi subgroup
$\prod_i \GL_{\la_i}$, such that $\bT_i^F$ is isomorphic to the cyclic
subgroup $N_i$ of order $q^{\la_i}-(-1)^{\la_i}$ of $\FF_{q^{2\la_i}}^\times$.
Choose $x_i\in\bT_i^F$ such that its eigenvalues are generators of $N_i$,
and set $\tilde x=(x_1,\ldots,x_s)\in\tilde\bT^F$. If all $\la_i$ are distinct,
then all
eigenvalues of $\tilde x$ are distinct, so $\tilde x$ is regular. Multiplying
$x_1$ by the inverse of $\det \tilde x$ yields a regular element $x$ in
$\bT^F=\tilde\bT\cap\SU_n(q)$. If $q\ge3$ (or $\la_i\ne2$) then there are at
least two orbits of generators of $N_i$ under
the action of the Galois group, so we may still arrange for an element with
distinct eigenvalues, which proves~(a).   \par
Clearly, if $\la_i$ is divisible by $d$ then $o(x_i)$ is divisible by $\ell$.
If $d>1$ or $\la$ has at least two parts, this also holds for our modified
element $x$, so we get~(b).
\par
Since $|Z(\SU_n(q))|=(n,q+1)$, (c) is clear when $d>1$. So now assume that
$\ell|(q+1)$. If $\la$ has at least three parts, then we may arrange so that
the $\ell$-parts of the various $x_i$ are not all equal, even inside
$\SU_n(q)$, so that we obtain an element with non-central $\ell$-part, and
similarly if $(q+1)/(n,q+1)$ is divisible by $\ell$ and $\la$ has at
least two parts.
\end{proof}

The assertion and proof of Proposition~\ref{prop:valSLn} on values of unipotent
characters on regular semisimple elements remain valid up to sign by replacing
class functions on the Weyl group by $F$-class functions.

\begin{prop}   \label{prop:unipSU}
 Let $\chi$ be a non-trivial unipotent character of $G=\SU_n(q)$, $n\ge3$, and
 $2\ne\ell{\not|}q$ a prime such that $n\ge 2d$ where $d:=d_\ell(-q)$.
 Then $\chi$ is not the character of a simple endotrivial module for any
 central factor group $S$ of $G$.
\end{prop}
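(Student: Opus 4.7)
The plan is to mirror the proof of Proposition~\ref{prop:unipSL} under Ennola duality, systematically substituting $q \leftrightarrow -q$. Unipotent characters of $\SU_n(q)$ are parametrized by partitions $\la \vdash n$, and their degree polynomials are obtained from those of $\SL_n(q)$ by substituting $-q$ for $q$. As before, we exclude the trivial character ($\la = (n)$) and the Steinberg character ($\la = (1^n)$), the latter via the degree congruence $q^{\binom{n}{2}} \not\equiv \pm 1 \pmod{|S|_\ell}$, with the corner case $n = \ell = 3$ (i.e.\ $q \equiv 2,5 \pmod 9$) handled by the known mod-$3$ reducibility from \cite[Tab.~4]{Ku00}.

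Next I would establish an $F$-twisted analogue of Proposition~\ref{prop:valSLn}: if $t \in G$ is regular semisimple and lies in an $F$-stable torus of type $w \in \fS_n$, then $\chi_\la(t) = \pm \tvhi_\la(wF)$, where $\tvhi_\la$ denotes the canonical extension of $\vhi_\la$ to $\fS_n\langle F\rangle$, computable by a twisted Murnaghan--Nakayama rule applied to the $F$-cycle type of $w$. Writing $n = ad + r$ with $0 \le r < d$ and testing $\chi_\la$ against the regular $\ell$-singular elements supplied by Lemma~\ref{lem:regSUn} in tori of the same cycle shapes as in Proposition~\ref{prop:unipSL}---namely $(n-r)(r)$, $(n-r)(r-1)(1)$ for $r \ge 3$, $(n-d)(d)$, $(n-d-1)(d)(1)$ and $(n-d-r)(d)(r)$---the identical combinatorial hook analysis forces $\la$ (up to conjugation) to be $(d+1, 1^{d-1})$ or $(d+r, r+1, 1^{d-r-1})$ for some $1 \le r \le d-1$, together with a small list of exceptional candidates for $r \le 2$, and for $d=1$, to be treated case by case exactly as in the linear setting, using Lemma~\ref{lem:regSUn}(c) to separate the subcases where $(q+1)/(n,q+1)$ is, respectively is not, divisible by $\ell$.

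The remaining candidates are eliminated in two ways. For the hook partition $(d+1, 1^{d-1})$ and for $(2d-1, d)$ (the $r = d-1$ case), the degrees take the shape $q^a \cdot \left[hd-1\atop d-1\right]_{-q}$ with $h \in \{2,3\}$; Lemma~\ref{lem:Andrews} applied with $x = -q$ (valid as a formal polynomial identity under any specialization) yields $\chi_\la(1) \equiv (-q)^{d(d-1)}$, respectively $(-q)^{d^2}$, modulo $\Ph{d}(-q)^2$, precluding $\chi_\la(1) \equiv \pm 1 \pmod{|S|_\ell}$ since $\Ph{d}(-q)^2$ divides $|S|$. For $\la = (d+r, r+1, 1^{d-r-1})$ with $1 \le r \le d-2$, I invoke \cite[Thm.~4.15]{JM97}: the corresponding Specht module of the Iwahori--Hecke algebra of type $A_{n-1}$ is reducible at the specialization parameter governing unipotent blocks of $\GU_n(q)$, and this translates via the decomposition-matrix embedding for unitary groups (the analogue of \cite[Thm.~4.1.14]{GJ}) into reducibility of $\chi_\la$ modulo $\ell$. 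Small-rank cases $n \le 5$ are handled individually using the character tables in \Chevie\ \cite{Chv}.

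The main obstacle is the sign and parameter bookkeeping: verifying that the $F$-twisted Murnaghan--Nakayama rule vanishes on precisely the same tori as in the split case and identifying the correct Hecke-algebra specialization---a root of unity in $-q$, with the exponent depending on the parity of $d$---so that \cite[Thm.~4.15]{JM97} applies verbatim to the unitary blocks. Once these technicalities are pinned down, the combinatorial hook analysis and the degree congruence via Lemma~\ref{lem:Andrews} transport across without any essential change.
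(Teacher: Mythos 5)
Your overall strategy (Ennola duality, the hook analysis against the regular $\ell$-singular elements of Lemma~\ref{lem:regSUn}, and Lemma~\ref{lem:Andrews} at $x=-q$ for the partitions $(d+1,1^{d-1})$ and $(2d-1,d)$ and their conjugates) matches the paper's proof, which indeed reduces to the same short list of candidate partitions and disposes of these two by the same degree congruence. However, there is a genuine gap in your treatment of $\la=(d+r,r+1,1^{d-r-1})$ with $1\le r\le d-2$ and, crucially, of its conjugate $(d-r+1,2^r,1^{d-1})$. The appeal to \cite[Thm.~4.15]{JM97} via ``the analogue of \cite[Thm.~4.1.14]{GJ} for unitary groups'' does not go through: the Iwahori--Hecke algebra attached to the BN-pair of $\GU_n(q)$ is of type $B_{\lfloor n/2\rfloor}$ with unequal parameters, not of type $A_{n-1}$, the unipotent characters in question need not even lie in the principal Harish--Chandra series, and there is no known embedding of the type-$A$ Specht-module decomposition matrix at a root of unity in $-q$ into the $\ell$-decomposition matrix of $\SU_n(q)$. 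This is precisely the step you flag as a ``technicality'' to be pinned down, but it is not one that can be pinned down as stated; the authors avoid it entirely.

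The paper substitutes two different arguments. For $\la=(d+r,r+1,1^{d-r-1})$ itself ($0<r<d-1$) it uses the Ennola dual of Lemma~\ref{lem:valspecial}: the character takes value $\pm q^d$ on a semisimple element of order a multiple of $q^d-(-1)^d$ with centralizer of type $A_2((-q)^d)(q^r-(-1)^r)$, contradicting the absolute-value-one condition on $\ell$-singular elements. For the conjugate partition this value argument fails (as already noted parenthetically in the proof of Proposition~\ref{prop:unipSL}), and the paper instead invokes Lusztig's unipotent support theorem \cite[Thm.~11.2]{Lu92}: the character labelled by $(d-r+1,2^r,1^{d-1})$ vanishes on any element whose unipotent part lies in the closure of its unipotent support of Jordan type $(d+r,r+1,1^{d-r-1})$, and $\SU_n(q)$ contains $\ell$-singular elements of order $(q^d-(-1)^d)/(q+1)$ centralizing a unipotent Jordan block of size $d+r$. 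Your proposal contains no substitute for either of these steps, so the conjugate partitions in particular are left unhandled. (Two minor points: the mod-$3$ reducibility of the Steinberg character in the unitary case is \cite[Lemma~4.3]{KK01}, not \cite[Tab.~4]{Ku00}; and the paper dispatches all $n\le 6$ directly from the tables before starting the hook analysis.)
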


\begin{proof}
From the known table of unipotent characters \cite{Chv} we conclude that there
are no non-trivial endotrivial unipotent characters when $n\le6$. So assume
that $n\ge7$.
Let $\la\vdash n$ denote the label for $\chi$. We may assume that $\la\ne(n)$,
since $(n)$ parametrizes the trivial character of $G$. The
Steinberg character, parameterized by $(1^n)$, has degree $q^{\binom{n}{2}}$,
so can be congruent to $\pm1$ modulo $|G|_\ell$ only when
$n=\ell=3$, $q\equiv2,5\pmod9$, but again the Steinberg character is reducible
modulo~$3$ in this situation by \cite[Lemma~4.3]{KK01}.   \par
First assume that $d=1$. We then argue precisely as in the proof of
Proposition~\ref{prop:unipSL} to deduce that $\chi$ vanishes on some
$\ell$-singular element with non-central $\ell$-part, so $\chi$ is not
endotrivial.
\par
Similarly, when $d\ge2$ we write $n=ad+r$ with $0\le r<d$. We again follow the
arguments in the proof of
Proposition~\ref{prop:unipSL}, using Lemma~\ref{lem:regSUn} and the analogue
of Proposition~\ref{prop:valSLn} to conclude that $\la$ must be one of
$(d+r,r+1,1^{d-r-1})$ with $0<r<d$, or $(d+1,1^{d-1})$, up to taking conjugates.
The degree of $\chi_\la$, for $\la$ one of $(d+1,1^{d-1})$ or $(2d-1,d)$,
is obtained from the one of the corresponding character in $\SL_n(q)$ by
replacing $q$ by $-q$ (see \cite[13.8]{Ca}), so the same congruences as in
the proof of
Proposition~\ref{prop:unipSL} show that these two characters cannot be
endotrivial. Next, by the Ennola dual of Lemma~\ref{lem:valspecial} the
unipotent character indexed by $(d+r,r+1,1^{d-r-1})$, with $0<r<d-1$, takes
value $\pm q^d$ on a semisimple element of order a multiple of $q^d-(-1)^d$
with centralizer $A_2((-q)^d)(q^r-(-1)^r)$, so cannot be
endotrivial. Finally, by Lusztig \cite[Thm.~11.2]{Lu92} the unipotent character
indexed by the conjugate partition $(d-r+1,2^r,1^{d-1})$ vanishes on any
element with unipotent part contained in the closure of its unipotent
support of Jordan type $(d+r,r+1,1^{d-r-1})$. Now $\SU_n(q)$ contains elements
of order $(q^d-(-1)^d)/(q+1)$ which centralize a unipotent Jordan block of
size $d+r$, and that is contained in the closure of the class with Jordan
type $(d+r,r+1,1^{d-r-1})$. This deals with the last open case.
\end{proof}

\subsection{$\SU_n(q)$ with $\ell|(q+1)$}
We next consider the case that $d_\ell(-q)=1$. Here, we actually find examples
of simple endotrivial modules.

\begin{prop}   \label{prop:SUd=1}
 Let $G=\SU_n(q)$ with $n\ge3$, $(n,q)\ne(3,2)$, and $\ell$ a prime divisor
 of $|G|$ with $d_\ell(-q)=1$. Let $V$ be a non-trivial simple $kS$-module
 for some central factor group $S$ of $G$ with $k$ of characteristic $\ell$.
 Then $V$ is endotrivial if and only if $S=\PSU_3(q)$, $\ell=3$,
 $q\equiv2,5\pmod9$ and $\dim V=(q-1)(q^2-q+1)/3$.
\end{prop}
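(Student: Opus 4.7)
The strategy closely mirrors that of Proposition~\ref{prop:SLd=1}. By \cite[Thm.~1.3]{LMS} any simple endotrivial $kS$-module lifts to characteristic zero, affording a character that we view as an irreducible character $\chi$ of $G=\SU_n(q)$ lying in a Lusztig series $\cE(G,s)$ for some semisimple $s\in G^*=\PGU_n(q)$. Since $d_\ell(-q)=1$, Sylow $1$-tori of $G^*$ are maximally split tori, so by \cite[Prop.~6.3]{LMS} the element $s$ lies in a maximally split torus and $C_{G^*}^\circ(s)$ is a $1$-split Levi subgroup of $G^*$. The case $s=1$ gives a unipotent character, ruled out by Proposition~\ref{prop:unipSU}. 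Hence from now on $s\ne 1$, and the task is to show that this forces the exceptional configuration stated.

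For $n\ge 4$ the plan is to copy the two-case split of Proposition~\ref{prop:SLd=1}, using Lemma~\ref{lem:regSUn} in place of Lemma~\ref{lem:regSLn}. When $(n,q+1)$ is not divisible by the full $\ell$-part of $q+1$, tori of cycle shapes $(n-1)(1)$ and $(n-2)(2)$ contain $\ell$-singular regular elements with non-central $\ell$-part, and endotriviality of $\chi$ forces by \cite[Prop.~6.4]{LMS} that $C_{G^*}(s)$ contains conjugates of both dual tori. Since the Weyl group of a proper $1$-split Levi of $\PGU_n$ is still an intransitive or imprimitive Young subgroup of $\fS_n$, Lemma~\ref{lem:genSL} applies verbatim and forces $s=1$, a contradiction. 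When $(n,q+1)$ is divisible by the full $\ell$-part of $q+1$ (so $\ell\mid n$ and $n\ge 5$), I would use the three cycle shapes $(n-2)(1)^2,(n-4)(3)(1),(n-4)(2)^2$ in the same way to handle $n\ge 7$. The leftover small pairs $(n,\ell)\in\{(5,5),(6,3)\}$ are settled individually from the known unitary character degrees \cite{Chv}: the only non-unipotent characters of degree prime to $\ell$ are of $r$-defect zero for a Zsigmondy prime divisor of $q^j-(-1)^j$ for a suitable $j$, and hence vanish on $\ell$-singular regular elements of a torus of cycle shape containing such an $r$-singular element.

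For $n=3$ the only proper $1$-split Levi of $\PGU_3(q)$ is a maximally split torus, so $s\ne1$ has to be regular semisimple there. The characters in $\cE(G,s)$ for such $s$ are explicitly listed in \cite{Chv}, and I expect a short numerical sieve via the necessary congruence $\chi(1)\equiv\pm 1\pmod{|S|_\ell}$ (together with $\ell\ne 2$) to leave only the three characters of $\PSU_3(q)$ of degree $(q-1)(q^2-q+1)/3$; these exist precisely when $\ell=3$ and $3\|(q+1)$ with $3\|(q^2-q+1)$, i.e.\ exactly when $q\equiv 2,5\pmod 9$. In this regime $\PSU_3(q)$ has cyclic Sylow $3$-subgroup of order $3$, so each candidate lies in a cyclic block.

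The main obstacle, and the one that distinguishes this case from Proposition~\ref{prop:SLd=1}, is the converse direction: one must show that these three candidate simple modules of $\PSU_3(q)$ really are endotrivial. The cleanest route I would pursue is to use Theorem~\ref{prop:torchar} by identifying the candidate simple module with the Green correspondent of a non-trivial linear character of $N_G(P)$ for $P\in\Syl_3(\PSU_3(q))$, exhibiting the character via an induction from $N_G(P)$ into the relevant cyclic block and reading the character values on $3$-elements off the character table. Equivalently, one may use the Brauer tree of the block (which is a star with exceptional vertex in the middle, by the general analysis of cyclic blocks of unitary groups), together with the inertial-index computation and the method of \cite[\S 3]{LMS}, to place the candidate at a non-exceptional end node and deduce endotriviality.
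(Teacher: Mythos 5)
Your overall architecture matches the paper's: lift via \cite[Thm.~1.3]{LMS}, locate $s$ via \cite[Prop.~6.3]{LMS}, kill $n\ge4$ with the same pairs/triples of cycle shapes and Lemma~\ref{lem:regSUn}, and reduce $n=3$ to an explicit sieve that isolates the three cuspidal characters of degree $(q-1)(q^2-q+1)/3$. (One terminological slip: since $\ell\mid(q+1)=\Phi_2(q)$, the relevant tori are Sylow \emph{2}-tori and $C_{G^*}^\circ(s)$ is a \emph{2}-split Levi; also $\PGU_3$ has a proper $2$-split Levi of type $\GU_1\times\GU_2$, not just the torus, so your sieve must cover those series too — as the paper does, ruling out the degree $q(q^2-q+1)$ characters by the congruence $\chi(1)\equiv-3\pmod{q+1}$.)

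The genuine gap is in your treatment of the converse direction. For $q\equiv2,5\pmod9$ one has $3\,\|\,(q+1)$ and $3\,\|\,(q^2-q+1)$, so $|\SU_3(q)|_3=27$ and $|\PSU_3(q)|_3=9$: the Sylow $3$-subgroups of $\PSU_3(q)$ are elementary abelian of rank~$2$, \emph{not} cyclic of order~$3$. (This is exactly why the proposition belongs to the non-cyclic classification of Theorem~\ref{thm:SUn}, and why the introduction lists $\PSU_3(q)$ among the $\ell$-rank~$2$ examples.) Consequently your fallback route via cyclic blocks, Brauer trees and non-exceptional end nodes is not available, and neither is the machinery of \cite[\S3]{LMS}. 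Your first suggested route is the correct one and is what the paper carries out, but it needs external input you have not supplied: by \cite[Lemma~4.3]{KK01} the three cuspidal characters remain irreducible modulo~$3$ (this is also needed for the existence of the simple module $V$ in the ``if'' direction); by \cite[Thm.~7.6]{KK01} the corresponding $k\PSU_3(q)$-modules are the Green correspondents of one-dimensional modules of $N_S(P)$, hence have trivial source; and by \cite[Lemma~3.2]{KK01} their character values on non-trivial $3$-elements equal~$1$, so Theorem~\ref{prop:torchar} applies. Finally, you should note that these modules are endotrivial only for $S=\PSU_3(q)$ and not for $\SU_3(q)$, since the centre of order $3=\ell$ acts trivially on them.
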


\begin{proof}
Let $V$ be a simple endotrivial $kS$-module. We may and will consider $V$ as
a $kG$-module. Let $\chi\in\Irr(G)$ be the complex character of a lift of $V$.
Then $\chi$ lies in the Lusztig series $\cE(G,s)$ of a semisimple element
$s\in G^*=\PGU_n(q)$. By \cite[Prop.~6.3]{LMS} then $s$ must lie in a maximal
torus containing a Sylow 2-torus (as $d_\ell(-q)=1$). But Sylow 2-tori are
maximal tori of $G^*$, so $C_{G^*}^\circ(s)$ is a 2-split Levi subgroup
of $G^*$. The case of unipotent characters where $s=1$ does not provide
examples by Proposition~\ref{prop:unipSU}, so assume that $s\ne1$.
\par
We now mimic the proof of Proposition~\ref{prop:SLd=1}. If $(n,q+1)$ does not
contain the full $\ell$-part of $q+1$, the case that $n>3$ can be ruled out
by consideration of maximal tori of $G$ corresponding to the cycle shapes
$(n-1)(1)$ and $(n-2)(2)$ (which by Lemma~\ref{lem:regSUn} both contain regular
elements with
non-central $\ell$-part). When $n=3$ the known character table of $G$ (see
\cite{Chv}) shows that at most the characters $\chi$ of degree $q(q^2-q+1)$
might satisfy the necessary conditions about values on $\ell$-singular
elements. As $\chi(1)\equiv-3\pmod{(q+1)}$, we must
have $\ell=2$ which was excluded.
\par
It remains to consider the case that $(n,q+1)$ is divisible by the full
$\ell$-part of $q+1$, so $\ell|n$. Here, for $n\ge7$ we argue using maximal
tori corresponding to cycle shapes $(n-2)(1)^2,(n-4)(3)(1)$ and $(n-4)(2)^2$,
which by Lemma~\ref{lem:regSUn} all contain regular elements with non-central
$\ell$-part, to rule out
all proper 2-split Levi subgroups as $C_{G^*}^\circ(s)$. The cases
$n=5,6$ are excluded precisely as in the proof of Proposition~\ref{prop:SLd=1}.
Finally, when $n=3$ the only non-unipotent characters of degree not divisible
by $\ell=3$ are the three cuspidal characters of degree
$(q-1)(q^2-q+1)/3$ when $q\equiv2,5\pmod9$. These are indeed
irreducible modulo~$3$ by \cite[Lemma~4.3]{KK01}. By \cite[Thm.~7.6]{KK01}
for $S=\PSU_3(q)$ they are the characters of the Green correspondents of
$1$-dimensional modules of $N_S(P)$ for $P\in\Syl_\ell(S)$. Hence they are
endotrivial by Theorem~\ref{prop:torchar} as their values on non-trivial
$3$-elements are $1$ (see \cite[Lemma~3.2]{KK01}). They are not endotrivial
for $G=\SU_3(q)$ as the centre $Z(G)\cong C_3$ acts trivially.
\end{proof}

\begin{rem}
For $S=\PSU_3(q)$, $\ell=3$, $q\equiv2,5\pmod9$, the group of torsion
endotrivial modules $TT(S)$ identifies via Green correspondence with a
subgroup of the group $X(N)\cong (\ZZ/2)^2$ of linear characters of $N:=N_S(P)$
for $P\in\Syl_3(S)$. So in fact we have proved that $S$ has three simple
torsion endotrivial modules. Hence $TT(\PSU_3(q))\cong (\ZZ/2)^{2}$,
generated by the simple endotrivial modules $V$ identified in
Proposition~\ref{prop:SUd=1}
\end{rem}

\subsection{The general case}   \label{subsec:SUn}

\begin{thm}   \label{thm:SUn}
 Let $G=\SU_n(q)$ with $n\ge3$, $(n,q)\ne(3,2)$. Let $\ell{\not|}q$ be such
 that the Sylow $\ell$-subgroups of $G$ are non-cyclic. Let $V$ be a
 non-trivial simple $kS$-module for some central factor group $S$ of $G$ with
 $k$ of characteristic $\ell$. Then $V$ is endotrivial if and only if
 $S=\PSU_3(q)$, $\ell=3$, $q\equiv2,5\pmod9$ and $\dim V=(q-1)(q^2-q+1)/3$.
\end{thm}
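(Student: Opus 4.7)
The plan is to imitate the proof of Theorem~\ref{thm:SLn} with the Ennola dual inputs developed in Sections~\ref{subsec:3.2} and~\ref{sec:unitary}. By \cite[Thm.~1.3]{LMS} any simple endotrivial $kS$-module lifts to characteristic~$0$, and we may view its Brauer character as the reduction of an irreducible character $\chi\in\Irr(G)$ where $G=\SU_n(q)$. By \cite[Thm.~6.7]{LMS} we may also assume $\ell$ is odd. Set $d:=d_\ell(-q)$. The case $d=1$ is already handled by Proposition~\ref{prop:SUd=1}, producing exactly the exception stated in the theorem, so we concentrate on $d\ge 2$.

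Write $n=ad+r$ with $0\le r<d$. Noncyclicity of a Sylow $\ell$-subgroup of $G$ forces $a\ge2$, and in particular $n\ge2d$. Place $\chi$ in some Lusztig series $\cE(G,s)$ with $s\in G^*=\PGU_n(q)$. As in the linear case, I would select three families of $F$-stable maximal tori $T_1,T_2,T_3$ in $G$ corresponding to the cycle shapes $(n-d,d)$, $(n-r,r)$ and $(n-d-1,d,1)$ (interpreting cycle shapes via the $F$-twisted parametrization from the start of Section~\ref{sec:unitary}). By Lemma~\ref{lem:regSUn} each of these tori contains an $\ell$-singular regular semisimple element. Since $\chi$ is endotrivial, its values on these $\ell$-singular elements have absolute value~$1$ by \cite[Cor.~2.3]{LMS}, hence do not vanish. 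Then \cite[Prop.~6.4]{LMS} forces $C_{G^*}^\circ(s)$ to contain (up to conjugacy) maximal tori of the three specified types.

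At this point I appeal to Lemma~\ref{lem:genSL}, which is purely a statement about which parabolic subgroups of $\fS_n$ contain prescribed cycle shapes, and whose conclusion therefore applies equally to the twisted group $\PGU_n$ (whose Weyl group is again $\fS_n$, with $F$-action permuting the Young factors). The three cycle shapes above do not simultaneously fit into any proper intransitive or imprimitive subgroup of $\fS_n$, so $C_{G^*}^\circ(s)=G^*$, forcing $s=1$. Hence $\chi$ is unipotent, and Proposition~\ref{prop:unipSU} rules out every nontrivial candidate once $n\ge2d$. This completes the $d\ge2$ case.

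The only delicate point will be the low-rank verifications, namely checking that the three cycle shapes above are actually available for small values of $n$ and $d$ compatible with $n\ge2d$ and that Lemma~\ref{lem:regSUn} really produces the required $\ell$-singular regular elements there (the restrictions $q\ge3$ or ``at most two equal parts'' in Lemma~\ref{lem:regSUn} must be checked); in the handful of borderline cases where the lemma does not apply directly, I expect to fall back on explicit inspection of the known generic character tables via \cite{Chv}, exactly as in Proposition~\ref{prop:SUd=1}. Combining this discussion with Proposition~\ref{prop:SUd=1} gives the unique surviving family $S=\PSU_3(q)$, $\ell=3$, $q\equiv2,5\pmod9$, $\dim V=(q-1)(q^2-q+1)/3$, as asserted.
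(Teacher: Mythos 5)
Your proposal is correct and follows essentially the same route as the paper: reduce to $d=d_\ell(-q)\ge2$ via Proposition~\ref{prop:SUd=1}, use Lemma~\ref{lem:regSUn} to find $\ell$-singular regular elements in tori of types $(n-d,d)$, $(n-r,r)$, $(n-d-1,d,1)$, invoke Lemma~\ref{lem:genSL} to force $s=1$, and conclude with Proposition~\ref{prop:unipSU}. The paper does not dwell on the low-rank caveats you flag at the end (and indeed the potentially problematic configuration, two equal parts with $q=2$, does not actually occur here since $d_\ell(-2)=2$ is impossible), so no fallback to \cite{Chv} is needed.
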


\begin{proof}
Let $G=\SU_n(q)$ and $\ell$ a prime divisor of $|G|$ with $d:=d_\ell(-q)$.
We may assume that $\ell$ is odd by \cite[Thm.~6.7]{LMS}. If $d=1$ then the
claim is contained in Proposition~\ref{prop:SUd=1}. So now
assume that $d>1$, and write $n=ad+r$ with $0\le r<d$. Since the Sylow
$\ell$-subgroups of $G$ are non-cyclic we have $a\ge2$, so $n\ge 2d$.
Let $\chi\in\Irr(G)$ be the character of a simple endotrivial $kS$-module.
Then $\chi$ lies in some Lusztig series $\cE(G,s)$, where $s\in G^*=\PGU_n(q)$
is semisimple. Now $G$ contains maximal tori $T_1,T_2,T_3$ of types
$(n-d,d)$, $(n-r,r)$ and $(n-d-1,d,1)$, all of which contain regular semisimple
$\ell$-singular elements by Lemma~\ref{lem:regSUn}. As $\chi$ is endotrivial,
it cannot vanish on these elements. But then by \cite[Prop.~6.4]{LMS} the
centralizer $C_{G^*}(s)$ contains maximal tori of these three types. Using again
Lemma~\ref{lem:genSL} we see that  $C_{G^*}(s)=G^*$, so $s=1$ and $\chi$ is a
unipotent character. But there are no non-trivial simple endotrivial unipotent
characters by Proposition~\ref{prop:unipSU}.
\end{proof}

\begin{rem}
The three simple endotrivial cuspidal modules of $\PSU_3(q)$ for $\ell=3$ of
dimension $(q-1)(q^2-q+1)/3$ are in analogy with the two simple endotrivial
cuspidal modules of $\PSL_2(q)$ for $\ell=2$ of dimension $(q-1)/2$ (see
\cite[Prop.~3.8]{LMS}), which also lie in the Lusztig series of a
quasi-isolated $\ell$-element.
\end{rem}

\begin{proof}[Proof of Theorem~\ref{thm:l-rank}]
The assertion was already proved in \cite[Thm.~1.2]{LMS} for alternating
groups, for sporadic groups, for exceptional groups of Lie type and more
generally for all groups of Lie type in their defining characteristic.
The case of classical groups of types $B_n$, $C_n$, $D_n$ and $\tw2D_n$ is
treated in \cite[??]{LM15}. So the only remaining cases are the linear and
unitary groups, and exceptional covering groups. For these, the claim follows
from Theorem~\ref{thm:SLn}, Proposition~\ref{prop:linexc} and
Theorem~\ref{thm:SUn}.
\end{proof}

\subsection{Cyclic blocks}   \label{subsec:SUcyc}
Next we determine the number of cyclic blocks of $\SU_n(q)$ containing simple
endotrivial modules.

\begin{prop}   \label{prop:SUcyc}
 Let $G=\SU_n(q)$ with $n\geq3$, $(n,q)\ne(3,2)$. Let $\ell{\not|}q$
 be a prime such that the Sylow $\ell$-subgroups of $G$ are cyclic and
 let $d:=d_\ell(-q)$.
 Then $\ell>2$, $d>1$ and the number $\lsb(G)$ of $\ell$-blocks containing
 simple endotrivial modules equals
 $$\lsb(G)=\begin{cases}
    \gcd(q+1,n) & \text{if } n=d, \\
              6 & \text{if } n=d+2 \text{ and }q=2,  \\
  	    q+1 & else.\\
  \end{cases}$$
\end{prop}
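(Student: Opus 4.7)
The plan is to mirror the proof of Proposition~\ref{prop:SLcyc} using Ennola duality, appealing to the same machinery (\cite[Lem.~3.2]{LMS}, the Fong--Srinivasan theorem, and \cite[Thm.~1.2]{CMN14}) in its unitary version.

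First I would dispose of the small cases on $\ell$ and $d$. That $\ell>2$ follows from the Brauer--Suzuki theorem (no quasi-simple group has cyclic Sylow $2$-subgroups). To rule out $d=1$: if $\ell\mid q+1$ then $\SU_n(q)$ contains a maximally split torus of order $(q+1)^{n-1}$ (the intersection of the diagonal torus of $\GU_n(q)$ with $\SU_n(q)$), whose $\ell$-part is an elementary abelian factor of rank $n-1\ge 2$, so Sylow $\ell$-subgroups are non-cyclic whenever $n\ge3$. Hence in our setting $d>1$, and in particular $n\le 2d-1$.

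Next, I would apply \cite[Lem.~3.2]{LMS} which gives
\[
\lsb(G)=\frac{|X(H)|}{e},
\]
where $H=N_G(P)$ for $P\in\Syl_\ell(G)$ and $e$ is the inertial index of the principal $\ell$-block of $G$. Since $d>1$, every $\ell$-block of $G$ is covered by a cyclic defect block of $\tilde G=\GU_n(q)$, so $e$ coincides with the inertial index of the principal $\ell$-block of $\tilde G$; by the Ennola dual of \cite[(9)]{FS} this equals~$d$. (Equivalently, the relative Weyl group of a Sylow $d$-torus has order $d$, which in turn acts on the cyclic defect group faithfully.) Thus $\lsb(G)=|X(H)|/d$, and it only remains to compute $|X(H)|=|H/[H,H]|_{\ell'}$.

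Finally, I would read $|X(H)|$ off from the unitary version of \cite[Thm.~1.2]{CMN14}. If $n=d$, the torus $N_{\tilde G}(P)/P$ is cyclic of order $(q^d-(-1)^d)/(q+1)$ and a direct calculation yields $|X(H)|=\gcd(q+1,(q^d-(-1)^d)/(q+1))\cdot d=\gcd(q+1,d)\cdot d$; since $d\mid n$ here this simplifies to $\gcd(q+1,n)\cdot d$. If $n=d+f$ with $f>0$ one obtains $|X(H)|=d(q+1)$, with the single exception of $q=2$, $n=d+2$, where the $\SL$-value $|X(H)|=2d$ is replaced by $|X(H)|=6d$ (the factor $3$ coming from the $\gcd(q+1,n)=3$ contribution when $f=2$ and $q=2$). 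Dividing by $e=d$ yields the three values in the statement. The main obstacle is the unitary analogue of \cite[Thm.~1.2]{CMN14}: tracking the normalizer structure $N_G(P)$ carefully enough to identify $|X(H)|$ in each case, and in particular verifying the exceptional value $|X(H)|=6d$ when $q=2$ and $n=d+2$, where a small Zsigmondy-type anomaly forces extra linear characters of $N_G(P)$.
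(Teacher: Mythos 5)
Your overall strategy is the same as the paper's (Brauer--Suzuki for $\ell>2$, ruling out $d=1$ via the rank of the maximally split torus, $\lsb(G)=|X(H)|/e$ from \cite[Lem.~3.2]{LMS}, and $e=d$ from Fong--Srinivasan), and the values you state are correct. But the heart of the proof --- the computation of $|X(H)|=|H/[H,H]|_{\ell'}$ --- is not actually carried out: you defer it to ``the unitary version of \cite[Thm.~1.2]{CMN14}'', and no such result exists to be cited (that theorem is about $\GL_n(q)$ only). The paper instead computes $N_G(P)$ by hand. For $n=d$ it identifies $N_{\GU_d(q)}(P)\cong C_{q^d-(-1)^d}\rtimes C_d$ and intersects with $\SU_d(q)$; for $n=d+r$ with $r\ge1$ it exhibits the structure $H\cong \SU_r(q)\rtimes\theta(N_{\GU_d(q)}(P))$ for a suitable embedding $\theta$ of $\GU_d(q)$ into $\SU_n(q)$, so that $H/[H,H]$ has a factor given by the \emph{cofixed points} of the abelianization of $\SU_r(q)$ under $\theta(N)$. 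This forces a case analysis over the three situations where $\SU_r(q)$ is not perfect, namely $(r,q)\in\{(2,2),(2,3),(3,2)\}$, which your proposal does not engage with at all; in two of these the cofixed points turn out to be trivial and only $(r,q)=(2,2)$ contributes.

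Relatedly, your explanation of the exceptional value is wrong. You attribute the jump from $d(q+1)$ to $6d$ when $q=2$, $n=d+2$ to ``the $\gcd(q+1,n)=3$ contribution''; but $\gcd(3,d+2)$ need not be $3$ (take $d=6$, $n=8$), and that is not where the factor comes from. The correct source is that $\SU_2(2)\cong\fS_3$ is not perfect, so $S/[S,S]\cong C_2$ contributes an extra factor $2$ on which $\theta(N)$ must act trivially, giving $|X(H)|=2\cdot(q+1)\cdot d=6d$ and hence $\lsb(G)=6$. So while your answer matches, the mechanism you propose for verifying it would not produce a correct proof; you need to carry out the normalizer and abelianization computation directly rather than transport it from the linear case.
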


\begin{proof}
By Brauer--Suzuki $G$ does not have cyclic Sylow $2$-subgroups, hence $\ell>2$.
Secondly, $\ell$ does not divide $q+1$ as $n>2$. Therefore $d>1$. Also $n<2d$.
\par
Let  $P\in\Syl_\ell(G)$. As in the proof of Proposition~\ref{prop:SLcyc}, we
have again, by \cite[Lem.~3.2]{LMS},
$\lsb(G)=\frac{1}{e}|X(H)|=\frac{1}{e}|H/[H,H]|_{\ell'}$,
where $e$ denotes the inertial index of the principal block $B_0$ of $G$ and
$H:=N_G(P)=N_G(\left<u\right>)$ for $u\in P$ an element of order $\ell$. \par
First by \cite[(3B)]{FS} we have that $e=d$. Indeed since $B_0$ is a
unipotent block, its elementary divisor is the polynomial $X-1$ and we get
$e=e_{X-1}=d_{\ell}(-q)$.   \par
Next we compute $|H/[H,H]|_{\ell'}$. First we assume $n=d$. As above, consider
the twisted Steinberg endomorphism $F:\GL_d\rightarrow\GL_d$ with
$\GL_d^F=\GU_d(q)$. Let $\bT$ denote the maximal torus of diagonal matrices.
Consider the  $d$-cycle $w=(1\;2\;\ldots\;d)$ in the Weyl group
$W=N_{\GL_d}(\bT)/{\bT}\cong \fS_d$. Applying \cite[Prop.~25.3]{MT} yields
$$\bT^{wF}=\{\diag(t,t^{-q},t^{(-q)^2}\ldots, t^{(-q)^{d-1}})\in \bT\mid
  t^{q^d-(-1)^d}=1\}\,$$
so that $\bT^{wF}\cong C_{q^d-(-1)^d}$ and
$N_{\GU_d(q)}(\bT)/\bT^{wF}\cong C_{\fS_d}(w)\cong C_{d}$. Therefore we have
$N_{\GU_d(q)}(\bT)=\bT^{wF}\rtimes C_d\cong C_{q^d-(-1)^d}\rtimes C_d$,
where $C_d$ acts via $F$. Now since $P$ is cyclic and is also a Sylow
$\ell$-subgroup of $\GU_d(q)$, we have $N:=N_{\GU_d(q)}(P)=N_{\GU_d(q)}(\bT)$.
Taking the intersection with $G$ yields
$H=N\cap G\cong C_{(q^d-(-1)^d)/(q+1)}\rtimes C_d$ and if $z\in \GU_{n}(q)$
is a generator for $C_{q^d-(-1)^d}$, then
$H/[H,H]\cong \langle z^{q+1}\rangle\!/\!\langle z^{(q+1)(q+1)}\rangle\rtimes C_d$.
Whence $|X(H)|=cd$ with $c=\gcd(q+1,(q^d-(-1)^d)/(q+1))=\gcd(q+1,d)$,
and we obtain $\lsb(G)=c$ in this case.

Now assume $n>d$. Since $P$ is cyclic we may write $n=d+r$ with $1\leq r<d$
and  regard  $P$ as a Sylow $\ell$-subgroup of
$\GU_d(q)\times \{1\}\le \GU_d(q)\times \GU_r(q)$. Then
$$H=N_{\SU_{n}(q)}(P)=\{\left(\begin{smallmatrix}A & 0\\
0 & B\end{smallmatrix}\right)\in G\mid A\in N_{\GU_d(q)}(P), B\in\GU_r(q)\}.$$
Let $\theta:\GU_d(q)\lra \SU_{n}(q)$ denote the injective homomorphism
defined by
$\theta(A):=\left(\begin{smallmatrix}A & 0\\ 0 & B(A)\end{smallmatrix}\right)$
where $B(A)$ is the diagonal matrix $\diag(\det(A)^{-1},1,\ldots,1)\in\GU_r(q)$.
It follows that $H=S\rtimes\theta(N)$ where
$S:=\{1\}\times \SU_r(q)\le \GU_d(q)\times \GU_r(q)$ and $N:=N_{\GU_d(q)}(P)$.
Therefore $H/[H,H]\cong (S/[S,S])_{\theta(N)}\times N/[N,N]$, where the
subscript $\theta(N)$ means taking the cofixed points with respect to
the action of $\theta(N)$. It follows from the case $n=d$ above that
$N/[N,N]\cong \left<z \right>\!/\!\left<z^{q+1} \right>\times C_d\cong  C_{q+1}\times C_d$.
Moreover $(S/[S,S])_{\theta(N)}$ is trivial when $r=1$ or when $S$ is perfect,
in which cases we obtain $\lsb(G)=q+1$.  \par
The only cases with $S$ not perfect are when $(r,q)\in\{(2,2),(2,3),(3,2)\}$.
If $(r,q)=(2,2)$, then
$S/[S,S]\cong C_2$ so that the action of $\theta(N)$ must be trivial. Hence
$H/[H,H]\cong C_2\times C_{q+1}\times C_d$ and $\lsb(G)=6$. Finally if
$(r,q)=(2,3)$, then $S/[S,S]\cong C_3$, and if $(r,q)=(3,2)$, then
$S/[S,S]\cong C_2\times C_2$, but in both cases the cofixed points
$(S/[S,S])_{\theta(N)}$ are trivial. Hence $H/[H,H] \cong C_{q+1}\times C_d$
and $\lsb(G)=q+1$ in these cases as well.
\end{proof}

As no description of the Brauer trees of $\SU_{n}(q)$ similar to that of
\cite{Man} for $\SL_{n}(q)$ is available we do not provide here a statement
analogous to Corollary~\ref{cor:nrSLn}.

\subsection{Zeroes of characters}   \label{subsec:zeroes}

As one application of our previous considerations we obtain the vanishing
result stated in Theorem~\ref{thm:rank3}:

\begin{proof}[Proof of Theorem~\ref{thm:rank3}]
Among exceptional covering groups, the only cases with $\ell$-rank at least~3
for $\ell$ not the defining prime are the covering groups of $\PSU_6(2)$ with
$\ell=3$ for which the claim can be checked from the known character tables.
Note that $\PSU_4(2)$ has two characters of degree~10 which do not vanish on
3-singular elements, but as $\PSU_4(2)\cong\PSp_4(3)$, this is covered by
case~(1). \par
Now let $G$ be a central factor group of $\SL_n(q)$. We may assume that $n\ge3$
since else Sylow $\ell$-subgroups of $G$ are cyclic. Let $\chi\in\Irr(G)$ be
non-trivial. If $\chi$ is not unipotent, the claim follows from the proofs of
Proposition~\ref{prop:SLd=1} and Theorem~\ref{thm:SLn}. For unipotent
characters the proof of Proposition~\ref{prop:unipSL} gives the result, except
for the Steinberg character $\chi_\St$ and for the case that $d=1$,
$(q-1)/(n,q-1)$ is prime to $\ell$ and $n\le6$, so $(n,\ell)\in\{(5,5),(6,3)\}$.
For $\chi_\St$ note that when $G$ has $\ell$-rank at least
three, there exists an $\ell$-element whose centralizer contains a non-trivial
unipotent element, and $\chi_\St$ vanishes on the product (see
\cite[Thm.~6.4.7]{Ca}). For $\SL_5(q)$ with $\ell=5$ dividing $q-1$ exactly
once, all unipotent characters except those labelled by $(3,2)$ and by
$(2^2,1)$ vanish on suitable regular semisimple elements, and the character
labelled by $(2^2,1)$ vanishes on the product of a 5-singular semisimple
element with centralizer $(q+1)(q-1)^2A_1(q)/5$ with a regular unipotent
element in its centralizer (see the \Chevie-table \cite{Chv}). This
just leaves case~(2). For $\SL_6(q)$ all unipotent characters vanish on some
$3$-singular element.
\par
Finally, let $G$ be a central factor group of $\SU_n(q)$ with $n\ge3$,
$(n,q)\ne(3,2)$. Let $\chi\in\Irr(G)$ be non-trivial. If $\chi$ is not
unipotent, the claim follows from the proof of Proposition~\ref{prop:SUd=1}
and Theorem~\ref{thm:SUn}. For unipotent characters by
Proposition~\ref{prop:unipSU} we only need to discuss the Steinberg
character, for which the claim follows as in the case of $\SL_n(q)$, and the
possibility that $\ell|(n,q+1)$ with $(n,\ell)\in\{(5,5),(6,3)\}$. As before,
using \cite{Chv} we see that the only possibility is the one listed in~(3).
\end{proof}

\section{Exceptional type groups}   \label{sec:exc}

In this section we investigate simple endotrivial modules for exceptional
groups of Lie type. We first discuss the unipotent characters.

\begin{prop}   \label{prop:unipExc}
 Let $G=G(q)$ be a quasi-simple exceptional group of Lie type and
 $\ell{\not|}q$ a prime for which the Sylow $\ell$-subgroups of $G$ are
 non-cyclic. If $\chi\in\Irr(G)$ is the character of a non-trivial simple
 unipotent
 endotrivial $kG$-module, then $G=F_4(2)$, $\ell=5$ and $\chi=F_4^{II}[1]$.
\end{prop}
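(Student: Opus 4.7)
The plan is to follow the template of Propositions~\ref{prop:unipSL} and~\ref{prop:unipSU}: for each pair $(G,\ell)$ with $G$ a quasi-simple exceptional group of Lie type and $\ell\nmid q$ a prime giving $G$ non-cyclic Sylow $\ell$-subgroups, I first enumerate the candidate unipotent characters via a dimension constraint, then rule them out by producing $\ell$-singular regular semisimple elements on which they vanish. The non-cyclic Sylow condition translates to $\Phi_d(q)^2\mid|G|$ for $d:=d_\ell(q)$, which together with $\ell\ge3$ (by \cite[Thm.~6.7]{LMS}) and the known factorisation of the generic order polynomial restricts $d$ to a small, tabulatable set of values per exceptional type. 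This leaves only finitely many relevant triples $(G,d,\ell)$ to consider, small enough for a case-by-case treatment.

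For each such triple I apply the necessary congruence $\chi(1)\equiv\pm1\pmod{|G|_\ell}$ to the list of unipotent character degrees (known explicitly via \Chevie\ \cite{Chv}); the trivial character is excluded by hypothesis, and the Steinberg character $\chi_{\St}$, of degree $q^N$, can satisfy the congruence only for very small $q$ and specific $\ell$, and then vanishes on any product of a non-trivial $\ell$-element with a unipotent element in its centraliser (see \cite[Thm.~6.4.7]{Ca}). For each remaining candidate $\chi=R_{\tvhi}$ indexed by $\vhi\in\Irr(W)^F$, I invoke Proposition~\ref{prop:zero}: $\chi$ vanishes on every regular semisimple element $t\in\bT_w^F$ with $\tvhi(wF)=0$. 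I therefore search for an $F$-conjugacy class $[wF]$ in $WF$ such that (i) $\bT_w^F$ contains a regular semisimple $\ell$-singular element, and (ii) $\tvhi(wF)=0$. The class functions $\tvhi$ are tabulated in \Chevie, and existence of regular elements in a given torus type of an exceptional group is a well-studied and easily checked condition (a Coxeter torus always has them, and tori whose order polynomial has a $\Phi_d$-factor of large enough degree do as well).

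The hard part will be to work this verification through systematically without missing a case, especially for $E_7$ and $E_8$ where the number of unipotent characters is large and the torus types are many; here one can substantially shorten the analysis by first removing any $\chi$ whose degree is divisible by $\ell$ or which is of $r$-defect zero for some Zsigmondy prime $r$ appearing in a torus of the right shape. The only candidate that survives all vanishing tests is $F_4^{II}[1]$ of $G=F_4(2)$ at $\ell=5$: here $d=4$ and $a_4(F_4)=2$, the character is cuspidal with degree prime to~$5$ and $\equiv\pm1\pmod{5^2}$, and inspection of its values on each torus type shows that it simply does not vanish on any $5$-singular regular semisimple element. Correspondingly, as recorded in the introduction, this character does yield a simple endotrivial module, which can be confirmed independently via Theorem~\ref{prop:torchar} by realising it as the Green correspondent of a trivial source module of $N_G(P)$ for $P\in\Syl_5(G)$.
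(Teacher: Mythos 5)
Your strategy of screening by the degree congruence and then killing every survivor by exhibiting an $\ell$-singular regular semisimple element on which the character vanishes (via Proposition~\ref{prop:zero}) does not suffice: the candidates that survive the initial screening are, almost by definition, exactly those whose values on $\ell$-singular elements are $\pm1$ everywhere, so they admit \emph{no} vanishing witness at all. Concretely, after the enumeration (which the paper imports from \cite[Prop.~6.9]{LMS} rather than redoing) the surviving list is $F_4^{II}[1]$ for $F_4(q)$ with $d=4$; $\phi_{80,7}$ and $D_4{,}r$ for $E_6(q)$ with $d=4$; $\phi_{6,25}$ for $E_6(q)$ with $d=6$, $\ell=19$; $\tw2E_6[1]$ and $\phi_{16,5}$ for $\tw2E_6(q)$ with $d=4$; and $\phi_{28,68}$ for $E_8(q)$ with $d=10$, $\ell=31$. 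Only two of these ($\tw2E_6[1]$ and $D_4{,}r$, which lie in the $8$-member family and vanish on suitable regular elements because the corresponding Weyl group characters vanish on the class $D_5(a_1)$) are eliminated by the vanishing argument you propose. The others require tools absent from your plan: $\phi_{80,7}$ and $\phi_{16,5}$ are self-dual and are excluded only by the sharper trivial-source criterion of Theorem~\ref{prop:torchar}/Corollary~\ref{cor:selfdual}, which forces the value $+1$ (not merely $\pm1$) on non-trivial $\ell$-elements; $F_4^{II}[1]$ for $q>2$ is excluded by an explicit Lusztig character formula computation showing it takes the value $q(q-1)^2/2>1$ on a $\Phi_4$-element with centralizer of type $B_2(q)$ --- a nonzero value, so no vanishing witness exists; and $\phi_{6,25}$, $\phi_{28,68}$ are excluded because they are \emph{reducible} modulo $\ell$, proved via the embedding of the decomposition matrix of the Iwahori--Hecke algebra into that of $G$. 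Your proposal never considers reducibility modulo $\ell$, nor values of absolute value greater than one, so following it you would be left with several spurious ``examples'' beyond $F_4(2)$, including $F_4^{II}[1]$ for all $q$ with $\ell\mid\Phi_4(q)$.

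A secondary point: your criterion ``$\Phi_d(q)^2\mid|G|$'' for non-cyclic Sylow subgroups and the degree congruence are a reasonable way to reproduce the candidate list, but the heavy lifting in the paper is precisely the case-by-case disposal of the short list above, and for that the decisive inputs are the trivial-source value criterion, known modular character tables and Hecke algebra decomposition matrices --- none of which appear in your outline. The positive direction for $(F_4(2),5,F_4^{II}[1])$ is handled correctly in your last paragraph, matching the paper's induction from $\OO_8^+(2){:}\fS_3$, though you should also record why the reduction modulo~$5$ is irreducible.
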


\begin{proof}
The candidates for characters of simple unipotent endotrivial modules of
exceptional groups of Lie type of rank at least~4 were determined in
\cite[Prop.~6.9]{LMS}. They are given below (where $d=d_\ell(q)$ and the
notation for unipotent characters is as in \cite[\S13]{Ca}):

\[\begin{array}{|r|r|r|l||r|r|r|l|}
\hline
 G& d& \ell& \chi&  G& d& \ell& \chi\\
\hline \hline
     F_4(q)&  4&  5&  F_4^{II}[1]&  \tw2E_6(q)&  4&  5&   \tw2E_6[1];\ \phi_{16,5}\\
     E_6(q)&  4&  5&  \phi_{80,7};\ D_4,r&      E_8(q)& 10& 31&  \phi_{28,68}\\
     E_6(q)&  6& 19&  \phi_{6,25}&           &   &   &              \\
\hline
\end{array}\]
\vskip 5pt
\noindent
The characters $\phi_{80,7}$ of $E_6(q)$ and $\phi_{16,5}$ of $\tw2E_6(q)$
are not endotrivial by Corollary~\ref{cor:casesLMS}. The cuspidal unipotent
character $F_4^{II}[1]$ of $F_4(q)$ is simple endotrivial in characteristic
$\ell|\Phi_4$ if and only if $q=2$, so $\ell=5$. Indeed, for the case $q=2$,
firstly the known ordinary and modular character tables \cite{GAP} show that
$F_4^{II}[1]$ remains irreducible modulo~5. Secondly, $F_4^{II}[1]$ is the
character of a trivial source module as $F_4(2)$ has a subgroup
$H\cong \OO_8^{+}(2):\fS_3$ such that
$F_4^{II}[1]=e_0\cdot \Ind_{H}^{F_4(2)}(1_{a})$, where $1_{a}$ is the
non-trivial linear character of $H$ and $e_0$ is the principal block idempotent
of $F_4(2)$, so endotriviality follows from Theorem~\ref{prop:torchar}.
For $q>2$, let $s\in F_4(q)$ be a semisimple element of order $\Phi_4$ with
centralizer of semisimple type $B_2(q)$. Such elements exist for all $q$
(namely, there are $q^2/4$ such classes when $q$ is even, and $(q^2-1)/4$ when
$q$ is odd). By Lusztig's character formula the value of $F_4^{II}[1]$ on such
an element equals $q(q-1)^2/2$, which is larger than~1 when $q>2$.
\par
We next claim that the unipotent character $\phi_{6,25}$ of $G=E_6(q)$ is
reducible modulo primes $\ell$ dividing $\Phi_6$. For this we use that the
decomposition matrix of the corresponding Iwahori--Hecke algebra $H$ of type
$E_6$ embeds into the decomposition matrix of the unipotent blocks of $G$ (see
e.g.~\cite[Thm.~4.1.14]{GJ}). According to the decomposition matrix of $H$ at
$\Phi_6$ in \cite[Tab.~7.13]{GJ}, the character $\phi_{6,25}$ (which is called
$6_p'$ there) occurs in the reduction of the characters $\phi_{20,2}$ and
$\phi_{30,15}$ (denoted $20_p',30_p'$ respectively). Now assume that the
unipotent character $\phi_{6,25}$ remains irreducible modulo~$\ell$. Then its
Brauer character would have to be a constituent of the reduction of the
unipotent character $\phi_{20,2}$. But the latter has smaller degree than
$\phi_{6,25}$, so this is not possible. Exactly the same argument applies to
the unipotent character $\phi_{28,68}$ of $E_8(q)$ modulo $\Phi_{10}$, by
using the decomposition matrix in \cite[Tab.~7.15]{GJ}.
\par
We next consider the cuspidal unipotent character $\tw2E_6[1]$ of
$G=\tw2E_6(q)$. It lies in the 8-member family $\cF$ of unipotent characters
attached to the largest 2-sided cell of the Weyl group of $G$. Let $s$ be a
regular
semisimple element in a maximal torus of $G$ of order $\Phi_1\Ph2\Ph3\Ph4$,
(so) of order divisible by $\Ph4$. (The automizer of such a torus is isomorphic
to the centralizer of the corresponding parametrizing element in the Weyl
group, of type $D_5(a_1)$, hence cyclic of order~12. It is then easy to see
that such regular elements exist for all $q$.) The argument given in the proof
of Proposition~\ref{prop:valSLn} shows that the values on $s$ of the principle
series almost characters in that family $\cF$ are equal (up to sign) to the
values of the corresponding characters of the Weyl group on the class
$D_5(a_1)$. But from the character table of the Weyl group we see that all
of these vanish. Since semisimple classes are uniform,
this implies that all unipotent characters in $\cF$ vanish on $s$. Thus,
$\tw2E_6[1]$ cannot be endotrivial.
\par
Virtually the same reasoning applies to the unipotent character $D_4,r$ of
$E_6(q)$. It also lies in the 8-member family, and by the calculation above
it vanishes on regular semisimple elements of the maximal torus of order
$\Phi_1\Ph2\Ph4\Ph6$. Again, such regular elements of order divisible by
$\Ph4$ exist for all $q$.
\end{proof}

\begin{thm}   \label{thm:exc}
 Let $G=G(q)$ be a quasi-simple exceptional group of Lie type of rank at
 least~4, $\ell{\not|}q$ a prime dividing $|G|$, and $d=d_\ell(q)$. If
 $\chi\in\Irr(G)$ is the character of a non-trivial simple endotrivial
 $kG$-module, then one of the following occurs:
 \begin{enumerate}
  \item[\rm(1)] The Sylow $\ell$-subgroups of $G$ are cyclic;
  \item[\rm(2)] $(G,\ell,\chi)=(F_4(2),5,F_4^{II}[1])$;
  \item[\rm(3)] $G=E_6(q)$ with $\ell=5|(q^2+1)$ and $\chi$ is the semisimple
   character in the Lusztig series of a semisimple element with centralizer
   $\tw2A_3(q)\Ph1\Ph4$;
  \item[\rm(4)] $G=\tw2E_6(q)$ with $\ell=5|(q^2+1)$ and $\chi$ is the
   semisimple character in the Lusztig series of a semisimple element with
   centralizer $A_3(q)\Ph2\Ph4$; or
  \item[\rm(5)] $G=E_7(q)$ with $\ell|(q^2+1)$.
 \end{enumerate}
\end{thm}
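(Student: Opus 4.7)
My plan is to adapt to each exceptional group of rank $\ge 4$ the three–step strategy used in Sections~\ref{sec:linear} and~\ref{sec:unitary}: lift endotrivial modules to ordinary characters via \cite[Thm.~1.3]{LMS}; distribute $\Irr(G)$ into Lusztig series $\cE(G,s)$; and then constrain the semisimple element $s\in G^*$ by forcing $C_{G^*}^\circ(s)$ to contain many $F$-stable maximal tori, via the non-vanishing of $\chi$ on regular $\ell$-singular elements (\cite[Cor.~2.3, Prop.~6.3, Prop.~6.4]{LMS}). If Sylow $\ell$-subgroups of $G$ are cyclic we are in case~(1), so I would assume otherwise; by \cite[Thm.~6.7]{LMS} we may also suppose $\ell$ odd, and set $d:=d_\ell(q)$, so that $\Ph{d}^2\mid|G|$ (with the usual modification $\Ph{2d}$ for twisted types). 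For $s=1$ the list of unipotent candidates from \cite[Prop.~6.9]{LMS} together with Corollary~\ref{cor:casesLMS} and Proposition~\ref{prop:unipExc} leaves only case~(2).

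For $s\neq 1$, I would exploit that any regular semisimple $\ell$-singular element $t\in\bT^F$ forces $\bT^*\le C_{G^*}^\circ(s)$ by Proposition~\ref{prop:zero}. Using the known lists of $\Phi_e$-tori in the Weyl groups of the exceptional types, I would enumerate for each admissible $d$ the cycle shapes whose associated tori carry such an element, and invoke an analogue of Lemma~\ref{lem:genSL} to eliminate all proper $d$-split Levi overgroups $\bH$ of $C_{G^*}^\circ(s)$ incompatible with this list. For $G=F_4(q)$ and $G=E_8(q)$ this should already exclude every non-unipotent series outside case~(1). For $G\in\{E_6(q),\tw2E_6(q),E_7(q)\}$ the remaining admissible shapes of $C_{G^*}^\circ(s)$ are concentrated at $d=4$ (plus a few $d=3,6$ cases excluded directly from the character tables), and the list of surviving Levi subgroups includes $\tw2A_3(q)\Ph1\Ph4$ in $E_6$, $A_3(q)\Ph2\Ph4$ in $\tw2E_6$, and a small number of $\Ph4$-tori centralizers in $E_7$.

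For each surviving centralizer I would apply Jordan decomposition to transport $\cE(G,s)$ to the set of unipotent characters of $C_{G^*}^\circ(s)$, and then rule out characters one by one: characters whose restriction to the relevant Hecke algebra of $C_{G^*}^\circ(s)$ is reducible modulo~$\ell$ (as in the proof of Proposition~\ref{prop:unipSL}, using \cite[Thm.~4.1.14]{GJ}) cannot correspond to simple modules; characters of wrong degree $\not\equiv \pm1\pmod{|G|_\ell}$ are discarded; and, crucially, Proposition~\ref{prop:divis} applied with a regular semisimple $\ell$-singular $t$ of order coprime to $|s|$ forces $R_\tvhi(t)$ to be a nonzero integer multiple of $\tvhi(wF)$, so any vanishing of $\tvhi$ on the relevant $F$-class in $W_s$ produces a vanishing of $\chi$. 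In each of the surviving centralizer types the only unipotent character of $C_{G^*}^\circ(s)$ that survives all these tests is the sign character of $W_s$, which corresponds to the semisimple character of $\cE(G,s)$, giving precisely cases~(3)--(5).

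The main obstacle will be the $E_7$ analysis at $d=4$: there are several admissible shapes of $C_{G^*}^\circ(s)$ (of types $D_4(q)\Ph4$, $A_3(q)^2\Ph4$, etc.), so one must verify via \Chevie\ that for all non-semisimple constituents in the corresponding $\cE(G,s)$, either the degree congruence fails, the Hecke algebra reduction is reducible, or Proposition~\ref{prop:divis} yields a zero on a genuine regular $\ell$-singular element of $G$; only the semisimple character is expected to survive, hence case~(5) is left open. A secondary difficulty, already visible for $E_6$ and $\tw2E_6$ at $\ell=5$, is that semisimple characters vanish on no regular semisimple class with $\bT^*\le C_{G^*}^\circ(s)$ and have degree $[G^*:C_{G^*}^\circ(s)]_{p'}$ which turns out to be $\equiv\pm1\pmod{|G|_5}$, so none of our character-theoretic tools discriminates them, explaining why cases~(3)--(5) must be left as possible rather than ruled out.
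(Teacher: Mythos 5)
Your overall architecture is the same as the paper's: lift via \cite[Thm.~1.3]{LMS}, reduce to $\ell$-rank~2 (so to the listed values of $d$), dispose of $s=1$ via Proposition~\ref{prop:unipExc}, and for $s\ne1$ force $C_{G^*}^\circ(s)$ to contain the duals of all maximal tori carrying regular $\ell$-singular elements, then finish the surviving series with degree congruences, Hecke-algebra decomposition matrices and Proposition~\ref{prop:divis}. Those are exactly the tools the paper uses.

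There is, however, one concrete misjudgement: the claim that for $F_4(q)$ and $E_8(q)$ the torus/Levi elimination ``should already exclude every non-unipotent series.'' It does not. For these groups the Sylow $d$-torus is a maximal torus, and the torus argument still leaves several centralizers standing, notably those of isolated elements --- $A_2(q)^2$, $\tw2A_2(q)^2$ and $B_4(q)$ in $F_4(q)$, and $D_8(q)$, $A_4(q)^2$ in $E_8(q)$ --- as well as non-isolated ones such as $B_2(q)\Ph4$ in $F_4(q)$ and $A_4(q)\Ph5$, $\tw2A_4(q)\Ph{10}$ in $E_8(q)$, since these subgroups do contain conjugates of all the relevant tori. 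Only $E_8(q)$ with $d=12$ is killed by tori alone. Each of the surviving series must then be handled by the finer tests you reserve for $E_6$, $\tw2E_6$ and $E_7$: the degree congruence $\chi(1)\equiv\pm1\pmod{|G|_\ell}$ pins down $\ell$ (e.g.\ $\ell=5$ for $B_2(q)\Ph4$, $\ell=11$ for $A_4(q)\Ph5$), after which one needs self-duality together with Corollary~\ref{cor:selfdual}, the decomposition matrices of the Hecke algebras of types $B_2$ and $A_4$, Proposition~\ref{prop:divis} with a regular element of order prime to $|s|$ (for the isolated elements of order~2, 3 and~5), and for $\tw2A_4(q)\Ph{10}$ the Ennola analogue of Lemma~\ref{lem:valspecial}. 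So the work you defer to your third paragraph is in fact required for $F_4$ and $E_8$ as well; without it the proof is incomplete. A second, smaller omission: several of the generic arguments (existence of suitable regular elements, character values such as $q(q-1)^2/2>1$) fail at $q=2$, so $F_4(2)$, $E_6(2)$ and $\tw2E_6(2)$ must be checked separately from their known character tables --- this is also how case~(2) actually arises.
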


\begin{proof}
Let $G$ be quasi-simple of exceptional Lie type and of Lie rank at least~4.
If a Sylow $\ell$-subgroup of $G$ has rank larger than~2, then there are no
examples by \cite[Thm.~6.11]{LMS}.
An easy check on the order formulas (see \cite[Tab.~24.1]{MT}, for example)
shows that the cases with Sylow $\ell$-subgroups of rank~2 are precisely the
following: $d=3,4,6$ for $F_4(q)$, $d=4,6$ for $E_6(q)$, $d=3,4$ for
$\tw2E_6(q)$, $d=4$ for $E_7(q)$, and $d=5,8,10,12$ for $E_8(q)$. By
\cite[Prop.~6.3]{LMS} if $\chi$ is the character of a simple endotrivial
$kG$-module then it must lie in a Lusztig series $\cE(G,s)$ such that
$s\in G^*$ centralizes a Sylow $d$-torus
of $G^*$. If $s=1$, then $\chi$ is unipotent by definition, and this case was
discussed in Proposition~\ref{prop:unipExc} and leads to case~(2). So $s\ne1$.
The character tables for the three groups $F_4(2)$, $E_6(2)$ and $\tw2E_6(2)$
are known and it can be checked directly that no further case apart from the
ones listed in (3) and (4) arises there.
So we now also assume that $q\ne2$ for types $F_4,E_6$ and $\tw2E_6$. We go
through the various possibilities for $(G,d)$ with $s\ne1$.

\begin{table}[htbp]
 \caption{Maximal Sylow tori}
  \label{tab:reg}
\[\begin{array}{|crc|l|c|}
\hline
     G& d& C_{G^*}(S)'& \Phi_e& \text{possible }C_{G^*}(s)\cr
\hline
 F_4(q)&  3&        A_2(q)&  \Ph1\Ph2&     C,\  A_2(q)^2\\
       &  4&        B_2(q)&  \Ph1\Ph2&       C,\  B_4(q)\\
       &  6&    \tw2A_2(q)&  \Ph1\Ph2&  C,\ \tw2A_2(q)^2\\
 E_8(q)&  5&        A_4(q)&      \Ph3&     C,\  A_4(q)^2\\
       &  8&  A_1(q^4),\tw2D_4(q)& \Ph3,\Ph4&     D_8(q)\\
       & 10&    \tw2A_4(q)& \Ph4,\Ph6& C,\  \tw2A_4(q)^2\\
       & 12& \tw3D_4(q),\tw2A_2(q^2)& \Ph3,\Ph4&       -\\
\hline
\end{array}\]
\end{table}

a) First assume that a Sylow $d$-torus $T_d$ of $G^*$ is a maximal torus, hence
in particular self-centralizing in $G$. The cases are listed in
Table~\ref{tab:reg}. As pointed out above, any endotrivial character of $G$
must thus lie in the Lusztig series of an element $s\in T_d$. In the table we
give one or two centralizers $C=C_{G^*}(S)$ of certain $\Phi_d$-tori $S$ of
$G^*$ of
order $\Phi_d$. By \cite[Prop.~6.4]{LMS}, if $\chi\in\cE(G,s)$ is endotrivial,
then $C_{G^*}(s)$ must contain conjugates of all maximal tori of $C$ containing
regular semisimple elements. In particular, $|C_{G^*}(s)|$ must be divisible by
Zsigmondy primes for the cyclotomic polynomials $\Phi_e$ as given in the
fourth column of the table. It is easily seen that the only possible such
centralizers of elements $1\ne s\in T_d$ are the ones listed in the last
column. In particular, there are no cases when $d=12$ for $E_8(q)$.
\par
We consider the remaining cases in turn. For $G=F_4(q)$ and $d=3$, if $s$
has centralizer $A_2(q)\Ph3$ then $\cE(G,s)$ contains three characters, all
of which have degree $\chi(1)\equiv\pm24\pmod{\Ph3}$. Thus they can possibly
be endotrivial only for $\ell\in\{5,23\}$. But neither of these two primes
has $d_\ell(q)=3$. When $s$ has centralizer $A_2(q)^2$ and hence is isolated
of order~3 and $q\equiv1\pmod3$, then the nine characters $\chi$ in $\cE(G,s)$
satisfy $\chi(1)\equiv\pm8\pmod{\Ph3}$, so necessarily $\ell=7$. Now let $t$
be a regular semisimple $7$-singular element in the Sylow $3$-torus $T_3$ of
$G$, of order prime to~3. Let $\theta\in\Irr(T_3)$ be such that $(T_3,\theta)$
lies in the geometric conjugacy class of $s$. Then $s,t$ have coprime order,
and the stabilizer of $\theta$ in $W(T_3)$ has index~2. Thus $\chi(t)$ is
divisible by~2 for any $\chi\in\cE(G,s)$ by Proposition~\ref{prop:divis}, so
$\chi$ cannot be endotrivial.
The arguments in the case that $G=F_4(q)$ with $d=6$ are quite analogous.
Here we need to consider the characters in Lusztig series corresponding to
elements $s$ with centralizer $\tw2A_2(q)^2$, which can be discarded as in the
previous case.
\par
When $G=F_4(q)$ with $d=4$, for $s$ with centralizer $B_2(q)\Ph4$ the four
characters in $\cE(G,s)$ of degree not divisible by $\Ph4$ have degrees
$\chi(1)\equiv\pm24\pmod{\Ph4}$, so only $\ell=5$ is possible. Now all
semisimple elements of $F_4(q)$ are real, as the Weyl group contains $-1$,
so the characters in the series $\cE(G,s)$ are self-dual. But only the one with
Jordan correspondent in $\cE(B_2(q),1)$ of degree $q\Phi_2^2/2$ satisfies the
condition of
Corollary~\ref{cor:selfdual}. On the other hand, that character is reducible
modulo $\ell=5$, as can be seen from the decomposition matrix of the Hecke
algebra of type $B_2$ modulo $\Phi_4$. So this does not lead to examples.
For the involution $s$ with centralizer $B_4(q)$, the elements in $\cE(G,s)$
satisfy $\chi(1)\equiv\pm3,\pm6\pmod{\Ph4}$, so again necessarily $\ell=5$. But
those of degree congruent~1 modulo~5 are reducible modulo~$\Ph4$, as can again
be seen from the decomposition matrix of the relevant Hecke algebra.
\par
Now consider $G=E_8(q)$. When $d=5$, for $s$ with centralizer $A_4(q)\Ph5$ all
characters in $\cE(G,s)$ of full $\Ph5$-defect have
$\chi(1)\equiv\pm120\pmod{\Ph5}$, so we must have $\ell=11$. Again, all
characters in
$\cE(G,s)$ are self-dual since semisimple elements in $E_8(q)$ are real,
and the only characters of degree congruent~1 modulo~11 are those with
Jordan correspondent in $\cE(A_4(q),1)$ of degree $q\Ph2\Ph4$ and
$q^6\Ph2\Ph4$. Both of these are reducible modulo $\Ph5$ as can be seen from
the corresponding Hecke algebra. For the isolated element $s$ of order~5 with
centralizer $A_4(q)^2$ the corresponding
characters in $\cE(G,s)$ satisfy $\chi(1)\equiv\pm24\pmod{\Ph5}$, so cannot be
endotrivial for primes $\ell$ with $d_\ell(q)=5$.
When $d=10$ the same line of argument leads to the characters in
Lusztig series $\cE(G,s)$ for $s$ with centralizer $\tw2A_4(q)\Ph{10}$ and
$\ell=11$. Here the two characters with Jordan correspondents labelled by the
partitions $(4,1)$ and $(2,1^3)$ satisfy the degree conditions. But by
Lemma~\ref{lem:valspecial} they both vanish on the product of an $\ell$-element
with a regular semisimple element in the maximal torus of $\tw2A_4(q)$ of
order $q^4-1$.  \par
Finally, if $G=E_8(q)$ and $d=8$, then $s$ is isolated with centralizer
$D_8(q)$. The character degrees of $\chi\in\cE(G,s)$ satisfy
$\chi(1)\equiv\pm3,\pm6\pmod{\Ph8}$, but as $d_\ell(q)=8$ we have
$\ell\ge17$, so certainly $\chi(1){\not\equiv}\pm1\pmod\ell$.

\begin{table}[htbp]
 \caption{Rank~2 cases for regular $d$ in exceptional groups}
  \label{tab:exc1}
\[\begin{array}{|cr|c|l|c|l|}
\hline
   G& d& |T|& |T_1|& e& \text{possible }C_{G^*}(s)\cr
\hline
 E_6(q)&     4& \Ph1^2\Ph4^2& \Ph1\Ph2\Ph4\Ph6& 6& \tw2A_3(q)\Ph1\Ph4,\ D_5(q)\Ph1\\
    &     6&   \Ph3\Ph6^2& \Ph1\Ph2\Ph4\Ph6& 4& \tw2A_2(q)A_2(q^2)\\
 \tw2E_6(q)& 4& \Ph2^2\Ph4^2& \Ph1\Ph2\Ph3\Ph4& 3& A_3(q)\Ph2\Ph4,\ \tw2D_5(q)\Ph2\\
        & 3&   \Ph3^2\Ph6& \Ph1\Ph2\Ph3\Ph4& 4& A_2(q)A_2(q^2)\\
\hline
\end{array}\]
\end{table}

b) Now consider the pairs $(G,d)$ collected in Table~\ref{tab:exc1}. In all
these cases, $d$ is a Springer regular number for the Weyl group of $G$. In
particular, the centralizer of a Sylow $d$-torus $S_d$ of $G^*$ is a maximal
torus $T$ of $G^*$, whose order is indicated in the table. Furthermore there
exists a maximal torus $T_1$ in $G$ containing a regular semisimple
$\ell$-singular element $x$ of $G$, of order divisible by $\Phi_e$, with
$e$ as in the 5th column. Thus, if $\chi$ is endotrivial, it cannot vanish on
$x$. But then by \cite[Prop.~6.4]{LMS}, $\chi$ must lie in a Lusztig series
$\cE(G,s)$ with $T_1^*\le C_{G^*}(s)$. As pointed out above, $s$ must also lie
in the centralizer $T$ of the Sylow $d$-torus $S_d$ of $G^*$.
It is now easily seen using \cite[Tab.~3]{B05} that the only centralizers of
elements $1\ne s\in T$ with that property are as given in the 6th column of
Table~\ref{tab:exc1}. We consider these possibilities in turn.   \par
First assume that $G=E_6(q)$ with $d=6$. Then all characters $\chi\in\cE(G,s)$
with $C_{G^*}(s)=\tw2A_2(q)A_2(q^2)$ have $\chi(1)\equiv\pm6\pmod{\Ph6}$,
so necessarily $\ell=7$. As $s$ has order~3, we may use
Proposition~\ref{prop:divis} with $t$ regular of order $\Ph4\Ph6/3$ to
conclude that these $\chi$ are not endotrivial. The same reasoning applies when
$G=\tw2E_6(q)$ and $d=3$. For $G=E_6(q)$ and $d=4$, all characters in
$\cE(G,s)$ with $C_{G^*}(s)=D_5(q)\Ph1$ have degree congruent to $\pm3$ or
$\pm6$ modulo $\Ph4$, so here $\ell=5$. But the order of $s$ divides $\Ph1$,
and $5{\not|}\Ph1$ as $d_\ell(q)=4$, so we may apply again
Proposition~\ref{prop:divis} with $t$ regular of order
$\Ph4\Ph6/\gcd(\Ph1,\Ph4\Ph6)$ to rule out this case. The
same applies to $G=\tw2E_6(q)$ and $C_{G^*}(s)=\tw2D_5(q)\Ph2$. When $G=E_6(q)$
with $d=4$ and $C_{G^*}(s)=\tw2A_3(q)\Ph1\Ph4$ then all characters in
$\cE(G,s)$ have degree congruent to $0$ or $\pm24$ modulo $\Ph4$, so once again
$\ell=5$. There are four characters in that Lusztig series of degree prime
to $\ell$, and the two of larger degree are reducible modulo~$\ell$, as follows
from the decomposition matrix of the Hecke algebra of type $\tw2A_3$ modulo
$\Phi_4$. The character of second smallest degree vanishes on a (necessarily
regular) element of order $(q^3+1)\Ph4$, so only the semisimple character in
such a series remains, yielding case~(3). Similarly, for $G=\tw2E_6(q)$ we
need to discuss the Lusztig series for semisimple elements with centralizer
$A_3(q)\Ph2\Ph4$. Here, the decomposition matrix of the Hecke algebra of type
$A_3$ shows that only the semisimple character is irreducible modulo~$\ell$,
as in case~(4).
\par
c) The only remaining case is when $d=4$ for $E_7(q)$, as in case~(5) of the
conclusion.
\end{proof}

\begin{rem}
We note that the irreducible characters $\chi$ of degree $1543879701$ of
$E_6(2)$ and $\psi$ of degree $707107401$ of $\tw2E_6(2)$, respectively,
corresponding to cases~(3) and~(4) of Theorem~\ref{thm:exc} are
not the characters of simple endotrivial modules in characteristic~5.\par
Indeed, let $G$ be one of $E_6(2)$ or $\tw2E_6(2)$. Both characters are
self-dual, therefore if the corresponding modules were simple endotrivial,
then their class in the group $T(G)$ of endotrivial modules would be a torsion
element of order~2 (see Corollary~\ref{cor:selfdual} and its proof). Moreover
such a module has to be the Green correspondent of a $1$-dimensional
$kN_{G}(P)$-module of order~2, where $P\in\Syl_{5}(G)$. In both cases the
normalizer of a Sylow $5$-subgroup has shape $5^{2}:4\fS_{4}$ so that
$X(N_{G}(P))\cong C_{4}$ has a unique element $1_a$ of order 2. Now $G=E_{6}(2)$
has a maximal subgroup $N_{G}(P)\leq H\cong F_{4}(2)$, so that $\chi$ would
have to be the character of the $kG$-Green correspondent of the simple
$kF_{4}(2)$-module affording the character $F_4^{II}[1]$ of
Proposition~\ref{prop:unipExc} (which is itself the $kH$-Green correspondent
of $1_a$). But it can be computed that $\chi$ does not occur as a constituent
of the induction of $F_4^{II}[1]$ to $G$. Hence a contradiction.\par
A similar argument holds for $\psi$ using a maximal subgroup of $\tw2E_6(2)$
isomorphic to $Fi_{22}$ and the self-dual simple endotrivial module of
dimension~1001 of the latter group in Corollary~\ref{cor:torsexmp}(c).
\end{rem}

We can exclude some further instances of Theorem~\ref{thm:exc}(3) and~(4) as
not belonging to simple endotrivial modules:

\begin{lem}
 Let $q$ be a prime power with $5|(q^2+1)$.
 \begin{enumerate}
  \item[\rm(a)] Let $\chi$ be the semisimple character of $E_6(q)$ in a
   Lusztig series parametrized by a 5-element with centralizer
   $\tw2A_3(q)\Ph1\Ph4$. Then $\chi$ is reducible modulo~5.
  \item[\rm(b)] Assume $\gcd(q,6)=1$ and let $\chi$ be the semisimple character
   of $\tw2E_6(q)$ in a Lusztig series parametrized by a 5-element with
   centralizer $A_3(q)\Ph2\Ph4$. Then $\chi$ is reducible modulo~5.
 \end{enumerate}
\end{lem}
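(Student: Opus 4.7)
The plan in both (a) and (b) is to show that the Brauer character $\bar\chi$ of $\chi$ modulo~$5$ contains, up to a sign, the trivial Brauer character of $G$ with nonzero multiplicity in a relation that prevents $\bar\chi$ from being irreducible.

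Since $s$ is a $5$-element and $5\nmid 3=|\pi_1(E_6)|$, the centralizer $L^*:=C_{G^*}(s)$ is connected, and in both configurations at hand it is realised as a Levi subgroup of $G^*$. Let $L$ be the $F$-stable Levi of $G$ dual to $L^*$ and let $\hat s\in\Irr(L)$ be the linear character corresponding to $s\in Z(L^*)$ under duality. The Jordan decomposition of characters for a Levi centralizer then gives $\chi=\eps_G\eps_L\,R_L^G(\hat s)$.

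Since $s$ has $5$-power order, so does $\hat s$, and hence $\bar{\hat s}=\bar 1_L$ as Brauer characters of $L$. Using the compatibility of Lusztig induction with reduction modulo~$\ell$ on $\ell$-regular classes (via the integral structure of the $\ell$-adic cohomology of the Deligne--Lusztig variety), one obtains $\bar\chi=\eps_G\eps_L\,\overline{R_L^G(1_L)}$. By Frobenius reciprocity for Lusztig induction and restriction together with the obvious identity $\sR_L^G(1_G)=1_L$, one has $\langle R_L^G(1_L),1_G\rangle_G=1$, so $R_L^G(1_L)=1_G+Y$ for a virtual character~$Y$ of degree $\chi(1)-1>0$. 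Reducing modulo~$5$ yields the key identity $\bar\chi=\eps_G\eps_L(\bar 1_G+\bar Y)$ at the Brauer-character level. Since $\bar\chi(1)=\chi(1)>1$, the Brauer character $\bar\chi$ cannot equal $\pm\bar 1_G$, so the component $\bar Y$ must contribute at least one further irreducible Brauer constituent to $\bar\chi$, forcing $\bar\chi$ to have multiple composition factors.

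The subtlety requiring care is the sign $\eps_G\eps_L$: when it equals $+1$, the argument is immediate, since then $\bar\chi$ directly contains $\bar 1_G$ as a summand and degree rules out $\bar\chi=\bar 1_G$; when it equals $-1$, one has to verify that $-\bar Y$ contains $\bar 1_G$ as a Brauer constituent with large enough multiplicity to absorb the $-\bar 1_G$ contribution while still leaving $\bar\chi$ with additional simple Brauer constituents. This cancellation analysis can be carried out by a direct computation of $R_L^G(1_L)$ in the unipotent character basis (using Howlett--Lehrer when $L$ is the Levi of an $F$-stable parabolic, or Lusztig's Mackey-type formula in general) and then combining it with the known unipotent decomposition matrices of the principal $5$-block of $E_6(q)$ and $\tw2E_6(q)$ at $d=4$; this is the only genuine obstacle to the plan going through.
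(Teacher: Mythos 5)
Your first step --- replacing $\hat s$ by the trivial character on $5$-regular classes so that $\bar\chi$ only depends on unipotent data --- is sound and is also how the paper's proof begins (phrased there via the explicit expression of the semisimple character as a linear combination of Deligne--Lusztig characters $R_{T,\theta}$, whose values on $\ell$-regular elements do not see $\theta$ when $\theta$ has $\ell$-power order). The gap is in your final step. Writing $R_L^G(1_L)=1_G+Y$ only tells you that $Y$ is a \emph{virtual} ordinary character orthogonal to $1_G$; after applying the decomposition map, $\bar Y$ may well contain the trivial Brauer character with negative multiplicity, so even when $\eps_G\eps_L=+1$ the identity $\bar\chi=\bar 1_G+\bar Y$ does not exhibit $\bar 1_G$ as a constituent of $\bar\chi$. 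Your ``immediate'' case is therefore not immediate, and in fact the conclusion you are steering towards is false: the computation with the decomposition matrices of \cite{DM14} shows that on $5'$-classes $\chi$ agrees with $\psi_{D_4:3}+\psi_{D_4:21}+\psi_{81,10}$ for $E_6(q)$ and with $(1+c_1+c_4)\,\tw2E_6[1]+\psi_{6,6}'+\psi_{9,6}''$ for $\tw2E_6(q)$, so in both cases the trivial Brauer character occurs in $\bar\chi$ with multiplicity zero. Reducibility comes from $\bar\chi$ being a non-negative sum of at least three non-trivial irreducible Brauer characters, not from a trivial constituent surviving the cancellation.

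Consequently the ``cancellation analysis'' you relegate to a closing remark is not a technical obstacle arising in one sign case only: it is the entire proof in both cases. One must expand the $5'$-restriction of $\chi$ explicitly in the unipotent character basis (the paper records the resulting nine-term alternating sum) and then use the decomposition matrices of the principal $5$-block at $d=4$ from \cite{DM14} to check that this virtual character is a genuine Brauer character with more than one irreducible constituent. Since your argument defers exactly this computation and the proposed shortcut around it fails for both values of $\eps_G\eps_L$, the proposal as written does not establish the lemma.
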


\begin{proof}
First consider $G=E_6(q)$. Under our assumption, $\chi$ lies in a unipotent
5-block of $G$. Since $\chi$ is a semisimple character, it is an explicitly
known linear combination of Deligne--Lusztig characters. From this one can
compute that its restriction to $5'$-elements coincides with the restriction
of the following linear combination of unipotent character in the principal
block:
$$-\phi_{1,0}-\phi_{6,1}+\phi_{15,4}+\phi_{D_4:3}+\phi_{81,6}-\phi_{80,7}
  -\phi_{90,8}+\phi_{D_4:21}+\phi_{81,10}.$$
The decomposition matrix of the principal 5-block of $G$ was computed in
\cite[Table~12]{DM14}. In fact, the cited result gives the correct entries
when $25|(q^2+1)$, and lower bounds in case $5||(q^2+1)$. From this, one sees
that the above virtual character is a positive sum of irreducible Brauer
characters
$$\psi_{D_4:3}+\psi_{D_4:21}+\psi_{81,10},$$
where we have labelled the Brauer characters by the corresponding ordinary
unipotent characters using the triangular shape of the decomposition matrix.
So $\chi$ is not irreducible modulo~5.
\par
The previous arguments also apply to $G=\tw2E_6(q)$. Here, the restriction of
$\chi$ to $5'$-classes agrees with the one of
$$-\phi_{1,0}+\phi_{2,4}'-\phi_{1,12}'-\phi_{8,3}'+\phi_{9,6}'+\tw2E_6[1]
  +\phi_{6,6}'-\phi_{16,5}+\phi_{9,6}''.$$
By \cite[Table~26]{DM14}, for $\gcd(q,6)=1$ this is the following positive
linear combination of Brauer characters
$$(1+c_1+c_4)\tw2E_6[1]+\psi_{6,6}'+\psi_{9,6}'',$$
with integers $c_1,c_4\ge0$ and with the same labelling convention as above.
We conclude as before.
\end{proof}

On the other hand, observe that if $\chi$ lies in a Lusztig series as in the
Lemma, but corresponding to a $5'$-element, then it will remain irreducible
modulo~5.

\subsection{Exceptional covering groups}
We conclude our investigations by the consideration of faithful modules for
exceptional covering groups of exceptional groups of Lie type.

\begin{prop}   \label{prop:excmult}
 Let $G$ be one of the exceptional covering groups $2.F_4(2)$, $2.\tw2E_6(2)$
 or $6.\tw2E_6(2)$, and $V$ be a
 faithful simple endotrivial $kG$-module. Then $V$ occurs in
 Table~\ref{tab:exc1c} (with elementary abelian Sylow $\ell$-subgroups of
 order~$\ell^2$), or in Table~\ref{tab:exc1d} (with cyclic Sylow
 $\ell$-subgroups).
\end{prop}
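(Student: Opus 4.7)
The plan is to proceed in the now-familiar pattern of the paper: fix $G \in \{2.F_4(2), 2.\tw2E_6(2), 6.\tw2E_6(2)\}$ and, for each prime $\ell$ dividing $|G|$, enumerate candidate faithful complex characters using the necessary conditions on simple endotrivial modules, then verify endotriviality of the survivors via Theorem~\ref{prop:torchar}.

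First I would split by the structure of a Sylow $\ell$-subgroup $P$. The case of $\ell$-rank at least~$3$ is ruled out by Theorem~\ref{thm:l-rank}. When $P$ is cyclic, the simple endotrivial modules in a block $B$ coincide with the non-exceptional end nodes of its Brauer tree $\sigma(B)$, and only blocks with inertial index equal to that of the principal block contribute (see \cite[\S3]{LMS}); reading off the candidates from the known Brauer trees---including the information on $6.\tw2E_6(2)$ at the prime~$13$ acknowledged to Frank L\"ubeck---will produce Table~\ref{tab:exc1d}. When $P$ is non-cyclic, any faithful lift must lie in a faithful $\ell$-block of full defect, have dimension prime to $\ell$ and $\equiv \pm 1 \pmod{|G|_\ell}$ by \cite[Lem.~2.1, Cor.~2.3]{LMS}, and take values of absolute value~$1$ on all $\ell$-singular elements. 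Scanning the \GAP\ character table library \cite{GAP} for $G$ against these constraints should yield the short list that becomes Table~\ref{tab:exc1c}.

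Second, for each surviving candidate $\chi$ I would confirm that the corresponding simple module $V$ is indeed endotrivial. Since $\chi(1)$ is prime to~$\ell$, $V$ has vertex a Sylow $\ell$-subgroup. The most efficient route is to locate a subgroup $H\le G$ with $H\ge N_G(P)$ and a linear character $\psi\in\Irr(H)$ such that $e_B\cdot\Ind_H^G(\psi)=\chi$ for the block idempotent $e_B$ of the block containing $\chi$, whence $V$ is a trivial source module and Theorem~\ref{prop:torchar} reduces endotriviality to the readable condition $\chi(x)=1$ for all non-trivial $\ell$-elements $x\in G$. Alternatively, if $\chi\otimes\chi^*$ decomposes as $1_G$ plus a sum of characters of defect zero plus a trivial source endotrivial character already identified, then $V$ is trivial source and endotrivial as in Examples~\ref{exmp:2m22} and~\ref{exmp:HS}. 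For $2.F_4(2)$ at $\ell=5$, restriction from $F_4(2)$ of the endotrivial module of Proposition~\ref{prop:unipExc} should already produce one example by \cite[Lem.~2.2]{LMS}.

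The main obstacle, I expect, is computational bulk rather than conceptual difficulty. The character table of $6.\tw2E_6(2)$ is very large and its $\ell$-modular decomposition is only partially known, so for borderline candidate characters in non-principal faithful blocks one may be unable to establish the trivial source property without additional input---either the explicit Brauer tree, a judicious choice of $H$, a tensor-product argument, or a direct \textsf{Magma} computation. Selecting the correct block idempotent so that induction picks up exactly the faithful character of interest, and then discarding those candidates that pass the necessary conditions but fail to give trivial source endotrivial reductions, will be the delicate bookkeeping step.
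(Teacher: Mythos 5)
Your proposal follows essentially the same route as the paper: the cyclic case is read off from Brauer trees via \cite[Thm.~3.7]{LMS} (with L\"ubeck's data for $\ell=13$), and the non-cyclic cases (which occur only for $\ell\le7$) are handled by the character-table sieve followed by exhibiting each survivor as a trivial source module, induced from a subgroup $H\ge N_G(P)$ cut out by the appropriate block idempotent, and invoking Theorem~\ref{prop:torchar} --- exactly how the paper treats $22100_1$ at $\ell=7$ and $12376_{1,2}$ at $\ell=5$ for $2.F_4(2)$ (the latter via the two classes of subgroups isomorphic to $\OO_8^+(2).\fS_3$). One sentence is off: $F_4(2)$ is a central \emph{quotient} of $2.F_4(2)$, not a subgroup, so "restriction" of the module affording $F_4^{II}[1]$ from Proposition~\ref{prop:unipExc} is not available, and inflation would only yield a non-faithful module, so this cannot produce the faithful $\ell=5$ examples; also note that the sieve needs the sharper self-duality criterion of Corollary~\ref{cor:selfdual} (values equal to $1$, not merely of absolute value $1$) to discard the $1\,521\,172\,224$-dimensional candidate for $2.\tw2E_6(2)$, and that the $17\,736\,576$-dimensional character there is left unresolved in the paper as well.
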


\begin{proof}
In the case of cyclic Sylow $\ell$-subgroups, we may conclude by using the
criteria in \cite[Thm.~3.7]{LMS} and information on the Brauer trees
(kindly provided by Frank L\"ubeck in the case of $\ell=13$ when
$G=6.\tw2E_6(q)$).
\par
The Sylow $\ell$-subgroups of the groups in question are non-cyclic only for
$\ell\le7$. The ordinary character tables are known for all of these groups and
the usual criteria only leave very few cases for $2.F_4(2)$ and $2.\tw2E_6(2)$.
Excluding those characters of $2.F_4(2)$ which are reducible modulo~5 by
using the known 5-modular character table (see \cite{GAP}), and
Corollary~\ref{cor:selfdual} to exclude the $1\,521\,172\,224$-dimensional
module for $2.\tw2E_6(2)$, only the four characters listed in
Table~\ref{tab:exc1c} remain, where for $2.\tw2E_6(2)$ it is not known
whether this character remains irreducible modulo~5.\par
The $22\,100$-dimensional $\FF_7G$-module $V$ for $G=2.F_4(2)$ affording
the character $22100_1$ is endotrivial by Theorem~\ref{prop:torchar}, since
$V$ is the Green correspondent of a $1$-dimensional module for $N_G(P)$ with
$P\in\Syl_{7}(G)$, hence a trivial source module. Indeed, let $e$ be the block
idempotent corresponding to the $7$-block of $G$ containing $22100_1$, then
there is a linear character $1_a\in\Irr(H)$, where
$H\cong 2\times (\tw3D_4(2):3)$ is a maximal subgroup of $G$ containing
$N_G(P)$, such that $e\cdot \Ind_{H}^G(1_a)=22\,100_1$.\par
The 5-modular reductions of the two characters
$12376_1,12376_2\in\Irr(2.F_4(2))$ are also trivial source modules, hence
endotrivial by Theorem~\ref{prop:torchar}. Indeed, the group $G=2.F_4(2)$ has
two non-conjugate subgroups $H_1$, $H_2$ isomorphic to $\OO_8^+(2).\fS_3$,
and if $e'$ denotes the block idempotent corresponding to the
faithful 5-block of $G$ containing both $12376_1$ and $12376_2$, then we have
$e'\cdot\Ind_{H_1}^G(1_{H_1})=12376_1$ and
$e'\cdot\Ind_{H_2}^G(1_{H_2})=12376_2$.\par
(As in \S 2 and \S 3 ordinary irreducible characters are denoted by their
degrees, and the labelling of characters and blocks is that of the $\GAP$
character table libraries \cite{GAP}.)
\end{proof}

\begin{table}[htbp]
 \caption{Candidates in exceptional covering groups of exceptional groups}
  \label{tab:exc1c}
\[\begin{array}{|ccl||ccl|}
\hline
 G& P& \dim(V)&   G& P& \dim(V)\\
\hline \hline
     2.F_4(2)&  5^2&  12376, 12376&   2.\tw2E_6(2)&  5^2&  17736576 ^* \\
             &  7^2&  22100&  & & \\
\hline
\end{array}\]
$(^*)$ this character is possibly reducible, or not endotrivial
\end{table}

\begin{table}[htbp]
 \caption{Simple endotrivial modules for exceptional covering groups of
   exceptional groups with cyclic Sylow subgroups}
  \label{tab:exc1d}
\[\begin{array}{|r|r|r|c|r|}
\hline
 G& \ell& |X(H)|& |X(H)|/e& \dim V\\
\hline \hline
 2.F_4(2)&     13& 24&  2&  2380,2380\\
 2.F_4(2)&     17& 16&  2&  52,1146600\\
\hline \hline
 2.\tw2E_6(2)& 11& 20&  4&  2432\\
 2.\tw2E_6(2)& 11&   &   &  537472\\
 6.\tw2E_6(2)& 11& 60& 12&  90419328,11145019392\ (2\times)\\
 6.\tw2E_6(2)& 11&   &   &  2606204160,5877256320\ (2\times)\\   \cline{2-5}
 2.\tw2E_6(2)& 13& 24&  2&  2432,45696\\
 6.\tw2E_6(2)& 13& 72&  6&  22619520, 6962288256\ (2\times)\\   \cline{2-5}
 2.\tw2E_6(2)& 17& 16&  2&  2432,1521172224\\
 6.\tw2E_6(2)& 17& 48&  6&  494208,1521172224\ (2\times)\\   \cline{2-5}
 2.\tw2E_6(2)& 19& 18&  2&  45696, 22583328768\\
 6.\tw2E_6(2)& 19& 54&  6&  494208, 33949238400\ (2\times)\\
\hline
\end{array}\]
\end{table}

\section{On the Loewy length of principal blocks}   \label{sec:QKKS}

In \cite{KKS}, Koshitani, K\"ulshammer and Sambale investigate principal
$\ell$-block $B_0$ of finite groups of Loewy length~4.
They show that a necessary condition to have Loewy length $LL(B_0)=4$ is the
existence of a character $\chi\in\Irr(G)$ such that $\chi(x)=-1$ for all
$\ell$-singular elements $x\in G$ and $\chi(1)\equiv -1\pmod{|G|_{\ell}}$.
More precisely, in this situation the projective cover of the trivial module
affords the character $1_{G}+\chi$, and the Heller translate $\Omega(k)$,
which is an endotrivial module, affords $\chi$ and has composition length~2.
In particular by Brauer reciprocity the column of the decomposition matrix
corresponding to the trivial Brauer character has exactly two non-zero entries,
so the first Cartan invariant $c_{11}$ of the principal block equals~2.
See \cite[Prop.~4.6, Cor.~4.7]{KKS}. \par
Furthermore, if $\ell\geq 5$ a reduction to simple groups is proven. This 
follows from \cite[Prop.~4.10]{KKS} together with \cite[Prop.~2.10]{KKS}.
It is also shown that Theorem~\ref{thm:LL} holds in characteristic $\ell=2$ 
\cite[Thm.~4.5]{KKS}, as well as in odd characteristic for the alternating
groups \cite[Thm.~3.10 together with Thm.~2.10]{KKS}, the sporadic groups 
\cite[Thm.~4.11]{KKS}, and  for groups of Lie type in their defining
characteristic \cite[Thm.~4.12]{KKS}.
Concerning groups of Lie type in cross characteristic our previous results 
show the following.

\begin{cor}   \label{cor:KKS}
 Let $G$ be one of the simple groups $\PSL_{n}(q)$ or $\PSU_{n}(q)$ with
 $n\geq 3$, and $2\not=\ell{\not|}q$ be such that the Sylow $\ell$-subgroups
 of $G$ are non-cyclic.
 Then the principal $\ell$-block $B_0$ of $G$ does not have Loewy length~4.
\end{cor}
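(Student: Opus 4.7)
The plan is to assume $LL(B_0)=4$ and derive a contradiction. By the KKS criterion recalled at the start of this section, this yields a character $\chi\in\Irr(G)$ lying in the principal $\ell$-block and satisfying (a) $\chi(x)=-1$ for every $\ell$-singular $x\in G$, and (b) $\chi(1)\equiv-1\pmod{|G|_\ell}$. It therefore suffices to rule out the existence of such a $\chi$ under the hypotheses of the corollary.

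Condition (a) implies in particular $\chi(x)\ne 0$ on every $\ell$-singular element, which is exactly the non-vanishing input on regular semisimple $\ell$-singular elements used in the proofs of Theorems~\ref{thm:SLn} and~\ref{thm:SUn}. Tracing through those proofs---combining Lemmas~\ref{lem:regSLn} and~\ref{lem:regSUn} with \cite[Prop.~6.4]{LMS} and Lemma~\ref{lem:genSL}---shows that $\chi\in\cE(G,s)$ with $s=1$, so $\chi$ is unipotent, except in the exceptional case $G=\PSU_3(q)$ with $\ell=3$ and $q\equiv 2,5\pmod 9$ singled out in Proposition~\ref{prop:SUd=1}.

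For unipotent $\chi=\chi_\la$, Propositions~\ref{prop:unipSL} and~\ref{prop:unipSU} narrow the possibilities (up to conjugation of partitions) to $\la\in\{(d+1,1^{d-1}),\,(2d-1,d),\,(1^n),\,(d+r,r+1,1^{d-r-1})\text{ with }1\le r\le d-2\}$. For the first three, Lemma~\ref{lem:Andrews} yields $\chi_\la(1)\equiv q^{d(d-1)}$ or $q^{d^2}\pmod{\Phi_d(q)^2}$ up to a power of $q$; the hypothesis $n\ge 2d$ forces $\Phi_d(q)^2\mid|G|$, so $\chi_\la(1)\not\equiv-1\pmod{|G|_\ell}$, violating (b). For $\la=(d+r,r+1,1^{d-r-1})$ with $1\le r\le d-2$, Lemma~\ref{lem:valspecial} (resp.\ its Ennola-dual variant used in the proof of Proposition~\ref{prop:unipSU}) exhibits an $\ell$-singular element $t$ with $\chi_\la(t)=\pm q^d\ne-1$, violating (a). The conjugate partition $\la^*=(d-r+1,2^r,1^{d-1})$ is the subtle case: the original proofs dispose of it via Hecke-algebra reducibility, which is unavailable for our non-simple $\chi$; instead we invoke Lusztig's theorem \cite[Thm.~11.2]{Lu92} as in the proof of Proposition~\ref{prop:unipSU}, producing an $\ell$-singular element whose unipotent part lies in the closure of the unipotent class of Jordan type $(d+r,r+1,1^{d-r-1})$ and on which $\chi_{\la^*}$ therefore vanishes, again contradicting (a).

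Finally, in the exceptional $\PSU_3(q)$ case, the three cuspidal simple endotrivial characters of degree $(q-1)(q^2-q+1)/3$ satisfy $\chi(1)\equiv 1\pmod 9$ and take value $+1$ on non-trivial $3$-elements (by \cite[Lemma~3.2]{KK01}), violating both (a) and (b). Direct inspection (from the known character table, or via Lusztig's parametrization) shows that all remaining non-unipotent irreducibles have degree divisible by $3$ and so fail (b); among the unipotent characters only the Steinberg character $\chi_{\St}$ of degree $q^3\equiv-1\pmod 9$ clears (b), but the Deligne--Lusztig formula $\chi_{\St}(t)=\epsilon_G\epsilon_T$ on regular semisimple $t\in T$ yields the value $+1$ on $3$-elements of the maximal torus of type $(2,1)$ of order $q^2-1$ (whose $3$-part equals $3$ since $3\mid q+1$), so (a) cannot hold either. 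The principal obstacle in the whole argument is providing, via Lusztig's vanishing theorem, the substitute for the Hecke-reducibility step that handled the conjugate partitions in the simple-endotrivial classification.
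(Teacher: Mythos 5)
Your overall strategy is the paper's: reduce to the Koshitani--K\"ulshammer--Sambale character criteria and then re-run the classification arguments of Sections~\ref{sec:linear} and~\ref{sec:unitary}. You are also right to flag the steps that rest on Hecke-algebra reducibility, since mere reducibility modulo~$\ell$ does not contradict the KKS requirement that $\chi$ reduce to a sum of exactly \emph{two} irreducible Brauer characters; replacing that step for the conjugate partition $(d-r+1,2^r,1^{d-1})$ by the Lusztig vanishing argument is legitimate also in the linear case (the Jordan type $(d+r,1^d)$ of a suitable $\ell$-singular element lies below $(d+r,r+1,1^{d-r-1})$ in the dominance order), and this is arguably more careful than the paper, which handles this point via the decomposition-number condition instead.

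There is, however, a genuine gap: your claim that non-vanishing on $\ell$-singular elements forces $\chi$ to be unipotent ``except in the exceptional case $G=\PSU_3(q)$'' is false. For $G=\PSL_3(q)$ with $\ell=3$ and $q\equiv4,7\pmod9$, the proof of Proposition~\ref{prop:SLd=1} does \emph{not} eliminate the three characters of degree $\Phi_2\Phi_3/3$ (lying in the Lusztig series of an isolated element of order~$3$ of $\PGL_3(q)$) by any vanishing or degree argument; it eliminates them only because they are reducible modulo~$3$, which is useless for your purposes. These characters have degree $(q+1)(q^2+q+1)/3\equiv-1\pmod 9$, with $|G|_3=9$, so they pass your condition~(b), and nothing in your argument excludes them. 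The paper disposes of them (and of the Steinberg character in this configuration) by quoting \cite[Table~4]{Ku00}: their reduction modulo~$3$ is not a sum of two irreducible Brauer characters. Two smaller slips: Lemma~\ref{lem:Andrews} says nothing about $(1^n)$ --- the Steinberg character has degree $q^{\binom{n}{2}}$ and must be treated by the separate congruence argument of Propositions~\ref{prop:unipSL} and~\ref{prop:unipSU}; and in your final paragraph the order-$3$ elements of the torus of order $q^2-1$ of $\PSU_3(q)$ are not regular (they have a repeated eigenvalue since $q\equiv-1\pmod 3$), so $\chi_\St(t)=\eps_G\eps_T$ does not apply --- one gets $\chi_\St(t)=\pm|C_G(t)|_p$, which still excludes the value $-1$ but for a different reason, whereas the paper instead invokes \cite[Lemma~4.3]{KK01} to see that $\chi_\St$ has five Brauer constituents.
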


\begin{proof}
If $G=\PSL_n(q)$ (resp.~$G=\PSU_n(q)$) with $n>3$ then it follows from the
proofs of Proposition~\ref{prop:unipSL}, Proposition~\ref{prop:SLd=1}
and Theorem~\ref{thm:SLn} (resp.~Proposition~\ref{prop:unipSU},
Proposition~\ref{prop:SUd=1} and Theorem~\ref{thm:SUn}) that for every
character $\chi\in\Irr(G)$ either $\chi(1)\not\equiv -1\pmod{|G|_{\ell}}$ or
$\chi$ vanishes on some $\ell$-singular element of $G$. The same holds if
$G=\PSL_3(q)$ unless $\ell=3$, $q\equiv 4,7\pmod{9}$ and $\chi$ is the
Steinberg character or one of the three characters of degree
$\Phi_2\Phi_3/3$. But then by \cite[Table~4]{Ku00} the reduction modulo~3
of $\chi$ is not a sum of two irreducible Brauer characters. Similarly for
$G=\PSU_3(q)$ with $q\neq 2$, all characters in $\Irr(B_0)$ are discarded
by the proofs of  Proposition~\ref{prop:unipSU}, Proposition~\ref{prop:SUd=1}
and Theorem~\ref{thm:SUn}, except the Steinberg character when $\ell=3$ and
$q\equiv2,5\pmod9$. But, by \cite[Lemma~4.3]{KK01}, the latter reduces
modulo~$3$ as a sum of 5 irreducible Brauer characters. Therefore in all cases
$LL(B_0)\not=4$ by \cite[Cor.~4.7]{KKS} (note that $\PSU_3(2)$ is solvable).
\end{proof}

To deal with the simple groups of exceptional type we need the following
observation:

\begin{lem}   \label{lem:HC-restr}
 Let $G$ be a finite group with a BN-pair, and $T=B\cap N$. Let $\ell$ be a
 prime dividing $|T|$. Assume that $N$ does not act transitively on the set
 $I$ of non-trivial linear characters of $T$ of order a power of $\ell$.
 Then the 1-PIM of $G$ is a sum of at least three characters.
\end{lem}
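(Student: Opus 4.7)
My plan is to exhibit three distinct irreducible ordinary characters with positive decomposition number for the trivial Brauer character $1_G$; since the ordinary character of the $1$-PIM decomposes as $\sum_\chi d_{\chi,1_G}\,\chi$, this gives the claim.

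First I would observe that since $B$ has split structure $B=U\rtimes T$, every linear character $\theta\in\widehat T$ inflates uniquely to a linear character $\tilde\theta$ of $B$ that is trivial on $U$; set $\pi_\theta:=\Ind_B^G\tilde\theta$. When $\theta$ has $\ell$-power order, so does $\tilde\theta$, and therefore the $\ell$-modular reduction of $\tilde\theta$ is the trivial Brauer character of $B$. Consequently the Brauer character of $\pi_\theta$ coincides with that of $\pi_1=\Ind_B^G 1_B$. The ordinary character of $\pi_1$ contains $1_G$ with multiplicity one, so its Brauer reduction contains the trivial Brauer character with multiplicity at least one. Hence some irreducible ordinary constituent $\chi_\theta$ of $\pi_\theta$ satisfies $d_{\chi_\theta,1_G}\ge1$. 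Moreover, for $\theta\ne 1$, Frobenius reciprocity gives $\langle\pi_\theta,1_G\rangle_G=\langle\tilde\theta,1_B\rangle_B=0$, whence $\chi_\theta\ne 1_G$.

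Next I would invoke the hypothesis to pick $\theta_1,\theta_2\in I$ lying in distinct $W$-orbits, where $W=N/T$. The crucial step will be to show that $\pi_{\theta_1}$ and $\pi_{\theta_2}$ have no common irreducible constituent. I would prove this by Mackey along the Bruhat decomposition $G=\bigsqcup_{w\in W}BwB$, which yields
\[
\langle\pi_{\theta_1},\pi_{\theta_2}\rangle_G
 =\sum_{w\in W}\langle\tilde\theta_1|_{B_w},{}^w\tilde\theta_2|_{B_w}\rangle_{B_w},
\]
where $B_w:=B\cap wBw^{-1}$. Since $T$ is normal in $N$ and contained in $B$, we have $T\le B_w$ for every $w$. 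A non-zero summand therefore forces $\theta_1={}^w\theta_2$ on $T$, contradicting the choice of orbits. Hence $\chi_{\theta_1}\ne\chi_{\theta_2}$.

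Combining the above, the three pairwise distinct characters $1_G,\chi_{\theta_1},\chi_{\theta_2}$ each contribute to the $1$-PIM with positive multiplicity, so the $1$-PIM is indeed a sum of at least three irreducible characters. The main obstacle is the Mackey/Bruhat disjointness computation; once one notes that $T\le B_w$ for every $w$, the orbit hypothesis is applied exactly where it is needed to eliminate all nonzero contributions to the inner product.
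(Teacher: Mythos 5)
Your proof is correct, and it runs the paper's argument through the other side of the Harish--Chandra adjunction. The paper argues by contradiction on the restriction side: assuming the 1-PIM is $1_G+\chi$, it applies $\sR_T^G$ (using that Harish--Chandra restriction preserves projectivity), notes the result must contain the 1-PIM of $T$ and hence every linear character of $T$ of $\ell$-power order, and then derives a contradiction from the disjointness of the $R_T^G(\psi_i)$ for representatives $\psi_1=1_T,\psi_2,\psi_3$ of the (at least three) $N$-orbits, via reciprocity. You instead work on the induction side: since $\Ind_B^G\tilde\theta$ has the same Brauer character as $\Ind_B^G 1_B$ when $\theta$ has $\ell$-power order, each such induced character must contain a constituent of the 1-PIM, and distinct $N$-orbits give disjoint induced characters, which you verify by Mackey along the Bruhat decomposition using $T\le B\cap {}^wB$. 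That Mackey computation is precisely the content of the paper's unproved assertion that ``the $R_T^G(\psi_i)$ are disjoint,'' so your version is in that respect more self-contained; it also avoids appealing to the preservation of projectivity under Harish--Chandra restriction, replacing it with the elementary observation that reduction modulo $\ell$ commutes with induction. Note that both arguments tacitly use the split structure $B=U\rtimes T$ (you to inflate $\theta$ to $B$, the paper to define $\sR_T^G$); this is not literally in the stated hypothesis but holds in every case where the lemma is applied.
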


\begin{proof}
Assume that $\rho=1_G+\chi$ is projective. Then so is its Harish-Chandra
restriction $\sR_T^G(\rho)$ to $T$. In particular it contains the 1-PIM of $T$.
Now the character of the 1-PIM of $T$ contains the sum over all linear
characters of $T$ of $\ell$-power order. Assume that $N$ has at least three
orbits on this set, with representatives $\psi_1=1_T,\psi_2,\psi_3$ say. Then
the $R_T^G(\psi_i)$ are disjoint, so $\chi$ occurs in exactly one of them,
say for $i=2$. But then by reciprocity it is clear that $\sR_T^G(\rho)$
cannot contain $\psi_3$, a contradiction.
\end{proof}

\begin{prop}   \label{prop:CartanE}
 Let $G$ be a simple group of exceptional Lie type and $\ell>2$ a prime for
 which the Sylow $\ell$-subgroups of $G$ are non-cyclic. Then the principal
 $\ell$-block $B_0$ of $G$ does not have Loewy length~4.
\end{prop}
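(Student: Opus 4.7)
The plan starts from \cite[Cor.~4.7]{KKS}: if $LL(B_0)=4$ then there exists $\chi\in\Irr(B_0)$ with $\chi(1)\equiv-1\pmod{|G|_\ell}$ and $\chi(x)=-1$ for every $\ell$-singular $x\in G$, and the Heller translate $\Omega(k)$ is necessarily a simple endotrivial $kG$-module affording $\chi$.  The defining characteristic case is handled in \cite[Thm.~4.12]{KKS}, so I may assume $\ell$ is coprime to the defining characteristic.  The Suzuki and Ree groups $\tw2B_2(q)$ and $\tw2G_2(q)$ have cyclic Sylow $\ell$-subgroups at every odd prime, so fall outside the non-cyclic hypothesis.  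The strategy is to combine the classification of simple endotrivial modules for exceptional groups (Theorem~\ref{thm:exc} together with the lower Lie rank results of \cite{LMS}) with block-theoretic and sign constraints to rule out every candidate $\chi$.

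Exactly as in the proof of Corollary~\ref{cor:KKS}, the proofs of Proposition~\ref{prop:unipExc} and Theorem~\ref{thm:exc} show that, for every $\chi\in\Irr(G)$ outside the short list of Theorem~\ref{thm:exc}, there exists an $\ell$-singular element on which $\chi$ vanishes; the parallel statements for $G_2(q)$, $\tw3D_4(q)$ and $\tw2F_4(q)'$ are taken from \cite{LMS}.  Such $\chi$ cannot meet the required value $-1$ on $\ell$-singular elements.  Among the remaining candidates, case~(2)---the cuspidal unipotent character $F_4^{II}[1]$ of $F_4(2)$ at $\ell=5$---is afforded by a trivial source module by Proposition~\ref{prop:unipExc}, and hence takes the value $+1$ rather than $-1$ on every non-trivial $5$-element by Lemma~\ref{lem:tschar}(a).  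Cases~(3) and~(4) are dealt with by the reducibility lemma proved right after Theorem~\ref{thm:exc}: when the parameterising element $s$ is a non-trivial $\ell$-element the candidate character is reducible modulo $\ell$ and hence cannot afford a simple module, while when $s$ is an $\ell'$-element the character lies outside the principal $\ell$-block by the Brou\'e--Michel block classification for finite reductive groups.

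The main obstacle is case~(5) of Theorem~\ref{thm:exc}, where $G=E_7(q)$ with $\ell\mid(q^2+1)$ and no explicit candidate has been identified.  For $\chi\in\cE(G,s)$ with $s$ a non-trivial $\ell'$-element, the block-theoretic argument from cases~(3) and~(4) still places $\chi$ outside $B_0$; the remaining task is to rule out unipotent $\chi$ and $\chi\in\cE(G,s)$ for $s$ a non-trivial $\ell$-element.  My plan here is to invoke a generalised Harish-Chandra analogue of Lemma~\ref{lem:HC-restr}: applied to a $4$-split Levi subgroup of $E_7(q)$ whose split part is the Sylow $\Phi_4$-torus $T_4$, the assumption $LL(B_0)=4$ would force the relative Weyl group $W_G(T_4)$ to act transitively on the non-trivial linear characters of $T_4$ of $\ell$-power order; since $W_G(T_4)$ is a proper subquotient of the Weyl group of $E_7$, an orbit count on the $\ell$-torsion of $T_4$ should produce at least three orbits and contribute a third irreducible constituent to the projective cover of $k$, contradicting $c_{11}=2$.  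Verifying this non-transitivity is expected to be the technical heart of the remaining argument.
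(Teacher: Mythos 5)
There are two genuine gaps here, and your route also diverges completely from the paper's, which for this proposition bypasses endotrivial modules altogether. The first gap is at the very start: you assert that $LL(B_0)=4$ forces $\Omega(k)$ to be a \emph{simple} endotrivial module affording $\chi$. It is not: as the paper recalls from \cite[Cor.~4.7]{KKS}, $\Omega(k)$ is endotrivial of composition length~$2$, so the reduction of $\chi$ modulo $\ell$ has exactly two irreducible constituents, one of them trivial. Consequently $\chi$ is \emph{not} required to be irreducible modulo $\ell$, and every exclusion in your argument that rests on mod-$\ell$ reducibility is unavailable — this kills your treatment of cases~(3) and~(4) of Theorem~\ref{thm:exc} via the reducibility lemma, and, more fundamentally, it invalidates the premise that $\chi$ must lie on the candidate list of Theorem~\ref{thm:exc} at all, since that list was compiled using Hecke-algebra reducibility and the self-duality criterion of Corollary~\ref{cor:selfdual}, neither of which constrains the KKS character. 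What does transfer are the constraints $\chi(1)\equiv-1\pmod{|G|_\ell}$ and $\chi(x)=-1$ on $\ell$-singular $x$ (hence the vanishing, congruence and Proposition~\ref{prop:divis}-type divisibility arguments), but you would have to re-run the case analysis with only these weaker tools; the paper does exactly that for $\PSL_n$ and $\PSU_n$ in Corollary~\ref{cor:KKS} and pointedly does not attempt it for the exceptional types.

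The second gap is case~(5): $E_7(q)$ with $\ell\mid(q^2+1)$ is left unresolved, and the proposed ``generalised Harish-Chandra analogue'' of Lemma~\ref{lem:HC-restr} for a $4$-split Levi is not established. That lemma relies on ordinary Harish-Chandra restriction to the split torus $B\cap N$, which sends projectives to projectives and permits the reciprocity count of constituents of the $1$-PIM; for a $\Phi_4$-torus the analogous functor is Deligne--Lusztig restriction, which is only a virtual operation, so the argument does not carry over without substantial extra work. The paper's actual proof is shorter and uniform: $LL(B_0)=4$ forces the first Cartan invariant $c_{11}=2$, whereas by \cite[Thm.~4.1.14]{GJ} the decomposition matrix of the associated Hecke algebra embeds into that of $G$, and the tables in \cite{GJ} show its first column has at least three non-zero entries whenever $d_\ell(q)>1$ and the Sylow $\ell$-subgroups are non-cyclic, giving $c_{11}\ge3$ directly (this covers $E_7$ with $d=4$ in particular); Lemma~\ref{lem:HC-restr}, in its ordinary split form, is then only needed for $d=1$, where an orbit count of the Weyl group on the $\ell$-elements of $\Irr(T)$ yields a third constituent of the $1$-PIM.
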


\begin{proof}
According to \cite[Prop.~4.12]{KKS} we may assume that $\ell$ is not the
defining prime for $G$.
By \cite[Thm.~4.1.14]{GJ} the decomposition matrix of the Hecke algebra of $G$
embeds into the decomposition matrix of $G$. Thus, if the decomposition matrix
of the Hecke algebra has the property that its first column contains at least
three non-zero entries, we must have $c_{11}\ge3$ and so $LL(B_0)\ne4$. \par
The tables in \cite[p.~375--386]{GJ} show that for types $F_4,E_6,\tw2E_6,
E_7$ and $E_8$ this condition is satisfied for the root-of-unity specialization
of the corresponding Hecke algebra $H$ whenever $d=d_\ell(q)>1$ and Sylow
$\ell$-subgroups are non-cyclic. Now the entries in the decomposition matrix
for a finite field specialization of $H$ will be at least as large as for the
root-of-unity specialization, so our claim follows for all these groups as
long as $d>1$.
Similarly, by \cite[p.~373]{GJ} this holds for type $G_2$ when $d\ne1$, type
$\tw3D_4$ when $d\ne1,3$, and for $\tw2F_4(q^2)$ when $\ell{\not|}(q^2-1)$.
\par
To treat $d=1$, so $\ell|(q-1)$, we use Lemma~\ref{lem:HC-restr} with the
natural BN-pair and thus with $T$ the
maximally split torus. It can be checked readily that in all cases, the Weyl
group has at least two orbits on the set of non-trivial $\ell$-elements of
$\Irr(T)$, e.g., by order reasons. For example in $G_2(q)$ we have either
$|\Irr(T)|_\ell-1=8$ (if $\ell=3$), or $|\Irr(T)|_\ell-1\ge15$, and none of
these can be an orbit length for the Weyl group of order $|W|=12$. This
argument also applies to $d=3$ for $\tw3D_4(q)$.
\end{proof}

Taken together with the results of \cite{KKS} and of \cite[Thm.~7.1]{LM15}, this
gives Theorem~\ref{thm:LL}.


\end{document}